\newcommand {\C}        {{\mathbb{C}}}
\newcommand {\ri}	{{\mathrm i}}
 \newtheorem{theorem}{Theorem}[section]
 \newtheorem{lemma}[theorem]{Lemma}
 \newtheorem{definition}[theorem]{Definition}
\newtheorem{example}[theorem]{Example}
 \numberwithin{equation}{section}
\DeclareMathOperator*{\argmin}{arg\,min}
\DeclareMathOperator*{\argmax}{arg\,max}
 \newenvironment{remark}[1][Remark]{\begin{trivlist}
 \item[\hskip \labelsep {\bfseries #1}]}{\end{trivlist}}
\title{Computation of Stability Radii for Large-Scale Dissipative Hamiltonian Systems}
\author{
	Nicat~Aliyev\footnotemark[1] \and 
	Volker~Mehrmann\footnotemark[2] \and 
	Emre~Mengi\footnotemark[3]
 }
\begin{document}
\maketitle

\renewcommand{\thefootnote}{\fnsymbol{footnote}}
\footnotetext[1]{Istanbul Sabahattin Zaim University, Department of Industrial Engineering,
  Halkal{\i} mahallesi, Halkal{\i} Caddesi  34303, K\"{u}\c{c}\"{u}k\c{c}ekmece, Istanbul, Turkey. (naliyev@ku.edu.tr).}
\footnotetext[2]{Technische Universit\"{a}t Berlin, Sekreteriat MA 4-5. Strasse des 17 Juni 136, D-10623 Berlin, Germany (mehrmann@math.tu-berlin.de). Supported by Deutsche Forschungsgemeinschaft via Project A02 of Sonderforschungsbereich 910.}
\footnotetext[3]{Ko\c{c} University, Department of Mathematics, Rumelifeneri Yolu, 34450 Sar\i yer, Istanbul, Turkey (emengi@ku.edu.tr). Supported by Deutsche Forschungsgemeinschaft via Project A02 of Sonderforschungsbereich 910.}

\begin{abstract}
% Body of abstract:
\noindent
A linear time-invariant dissipative Hamiltonian (DH) system $\dot x = (J-R)Q x$,
with a skew-Hermitian $J$, an Hermitian positive semi-definite $R$, and an Hermitian positive definite $Q$,
is always Lyapunov stable and under weak further conditions even asymptotically stable.
%However, small perturbations of $J, R, Q$, especially those that do not
%respect the structure, may move some of the eigenvalues of $(J-R)Q$ onto the imaginary axis,
%amounting to the loss of asymptotic stability.
In various applications there is uncertainty
on the system matrices $J, R, Q$, and it is desirable to know whether the system remains
asymptotically stable uniformly against all possible uncertainties within a given perturbation set. Such robust stability
considerations motivate the concept of stability radius for  DH systems, i.e., what is the maximal
perturbation permissible to the coefficients $J, R, Q$, while preserving the asymptotic stability.
We consider two  stability radii, the unstructured one where $J, R, Q$
are subject to unstructured perturbation, and the structured one where the perturbations preserve the DH structure.
%, in particular where $R$ is subject to Hermitian perturbations.
We employ characterizations for these radii that have been
derived recently in [\emph{SIAM J. Matrix Anal. Appl., 37, pp. 1625-1654, 2016}] and propose new algorithms to compute
these stability radii for large scale problems by tailoring  subspace frameworks that
% that preserve the
%DH structure inspired by structure-preserving model order reduction techniques for linear
%DH systems. These frameworks
are interpolatory and guaranteed to converge at a super-linear rate in theory.
At every iteration, they first solve a reduced problem and then expand
the subspaces in order to attain certain Hermite interpolation properties between the full and
reduced problems. The reduced problems are solved by means of the adaptations
of existing level-set algorithms for ${\mathcal H}_\infty$-norm computation in the unstructured case,
while, for the structured radii, we benefit from algorithms that approximate the
objective eigenvalue function with a piece-wise quadratic global underestimator.
The performance of the new approaches is illustrated with several examples including a system
that arises from a finite-element modeling of an industrial disk brake.
\\[10pt]
% Keywords:
\noindent
\textbf{Key words.} Linear Time-Invariant Dissipative Hamiltonian System, Port-Hamiltonian system, Robust Stability, Stability Radius, Eigenvalue Optimization, Subspace Projection, Structure Preserving Subspace Framework,
Hermite Interpolation.
\\[5pt]
% AMS subject classifications:
\noindent
\textbf{AMS subject classifications.} 65F15, 93D09, 93A15, 90C26
\end{abstract}

\section{Introduction}
Linear time-invariant \emph{Dissipative Hamiltonian (DH) systems} are dynamical systems of the form
\begin{equation}\label{DH}
		\dot x	 \;\; = \;\;	(J-R)Qx.
\end{equation}
They arise as homogeneous part of \emph{port-Hamiltonian (PH) systems} of the form
\begin{equation}\label{ph}
	\begin{split}
		\dot x	& \;\; = \;\;	(J-R)Qx(t) + (B-P)u(t), \\
		y(t)	& \;\; = \;\; 	(B+P)^H Qx(t) +  D u(t),
	\end{split}
\end{equation}
when the input $u$ is $0$ and the output  $y$ is not considered.
Here  $Q=Q^H\in\mathbb{C}^{n \times n}$ is an Hermitian positive definite matrix (denoted as $Q>0$), $J\in\mathbb{C}^{n \times n}$ is a skew-Hermitian matrix associated with the energy flux of the system, $R\in\mathbb{C}^{n \times n}$ is the Hermitian positive semi-definite (denoted by $R\geq 0$) \emph{dissipation matrix} of the system, $B\pm P\in\mathbb{C}^{n \times m}$ are the \emph{port matrices}, and $D$ describes the \emph{direct feed-through} from input to output.
The function $\mathcal H(x)= \frac 12 x^HQx$ (called \emph{Hamiltonian function}) describes the total internal energy of the system. Here and elsewhere $A^H$ denotes the conjugate transpose of a complex matrix $A$.

PH and DH  systems play an essential role in most areas of science and engineering, see e.g. \cite{JacZ12,SchJ14}, due to their very important structural properties; e.g., they allow modularized  modeling and easy model reduction via Galerkin projection. An important structural  property is that DH systems are automatically \emph{Lyapunov stable}, i.e., all eigenvalues of $A=(J-R)Q$ are in the closed left half of the complex plane, and those on the imaginary axis are semisimple, see \cite{MehMS16a}.
%This follows directly, see e.g. \cite{MehMS16a}) from the fact that $A$ %has the same eigenvalues as $Q^{1/2} A Q^{-1/2}$, and we have
%
%\begin{equation}\label{stab}
%	\begin{split}
%		{\rm Re} (z^HQ^{1/2}AQ^{-1/2}z) & \;\; = \;\; {\rm %Re}(z^HQ^{1/2}JQ^{1/2}z-z^HQ^{1/2}RQ^{1/2}z) \\
%							& \;\; = \;\; -z^HQ^{1/2}RQ^{1/2}z \; \leq \; %0,
%	\end{split}
%\end{equation}
%
%for every $z \in {\mathbb C}^n$, due to the fact that $R\geq 0$ and %$J=-J^H$.
However, DH systems are not necessarily \emph{asymptotically stable}, since $A$ may have purely imaginary eigenvalues, e.g., when the dissipation matrix $R$
vanishes, then all eigenvalues are purely imaginary. If a DH system is Lyapunov stable but not asymptotically stable, then arbitrarily small unstructured perturbations (such as rounding errors) may cause the system to become unstable.
%, while structured perturbations. %, which happens , e.g., in the %analysis of disk brake squeal.
%

These issues are our motivation to analyse whether a DH system is  \textit{robustly asymptotically stable}, i.e.,
whether small (structured or unstructured) perturbations keep it asymptotically stable.

\begin{example}\label{ex:ex1} {\rm Disk brake squeal is a well-known problem in  mechanical engineering.
It occurs due to self-excited vibrations caused by instability at the pad-rotor interface \cite{Akay_2002}.
The transition from stability to instability of the brake system is generally examined by  finite element (FE) analysis of the system. In \cite{GraMQSW16} FE models resulting for disk brakes are derived in form of second order differential equations
 \begin{equation}\label{brake1}
M \ddot x+D(\Omega) \dot x+ K(\Omega) x=f,
\end{equation}
with large and sparse coefficient matrixes $M$, $D(\Omega)$, and $K(\Omega) \in \mathbb{R}^{n \times n}$, where
$D(\Omega)$ and $K(\Omega)$ depend on the rotational speed $\Omega>0$
% \in {\mathbb R}^+$
of the disk, and have the form
\[
	D(\Omega)
		:=
	D_M		+	\frac{1}{\Omega} D_R,\
	K(\Omega)
		:=
	K_E		+	\Omega^2 K_g
\]
with $D_M, D_R$ representing material, friction-induced damping matrices,
$K_E, K_g$ corresponding to elastic, geometric stiffness matrices, respectively. Here,
$M>0$ and $K(\Omega)>0$, whereas $D(\Omega)\geq 0$. (The function $f$ represents a forcing term or control, but for the stability analysis one may assume that $f=0$, which we assume in the following.) The incorporation of  gyroscopic effects, modeled by the term $G(\Omega) \dot x$,  with $G(\Omega) := \Omega D_G=-\Omega D_G^H$, and circulatory effects, modeled by an unsymmetric term $N x$ gives rise to a system
\begin{equation}
\label{eq:full_brake}
M \ddot x + (D(\Omega) +G(\Omega)) \dot x +  (K(\Omega) + N)x=0,
\end{equation}
or in first order representation $ \widetilde{M} \dot z  +  \widetilde{K} z=0$,  where
\begin{equation}
\label{eq:first_order}
 \widetilde{M}=\begin{bmatrix}  M & 0 \\ 0 & K(\Omega) \end{bmatrix}, \;\:
\widetilde{K}=\begin{bmatrix}  D(\Omega) + G(\Omega) & K(\Omega) + N \\ -K(\Omega) & 0 \end{bmatrix}.
\end{equation}
Straightforward manipulations yields a system
\begin{equation}
\label{insta}
\dot {\widetilde{z}} = (J - R)Q\widetilde{z}
\end{equation}
with $\widetilde{z} = Q^{-1}z \:$, where
\begin{equation}\label{eq:insta_matrices}
\begin{split}
J =\begin{bmatrix}   -G(\Omega) & -(K(\Omega) + \frac12N) \\  K(\Omega) + \frac12N^H & 0 \end{bmatrix}, \hskip 22ex \\
R =\begin{bmatrix}  D(\Omega) & \frac12N \\ \frac12N^H & 0 \end{bmatrix}, \quad
Q =\begin{bmatrix}  M & 0 \\ 0 & K(\Omega) \end{bmatrix}^{-1}.
\end{split}
\end{equation}
In the absence of the circulatory effects, i.e., when $N=0$, the system in (\ref{insta}) is a DH system
and as a result it is Lyapunov stable and typically even asymptotically stable. However, small circulatory effects, i.e.,  perturbations by 
a non-symmetric $N$ of small norm,
%(and even relatively small rank),
may result in instability.
%This is regarded as a cause of disk brake squeal. If the DH system in (\ref{insta}) with $N = 0$
%is asymptotically stable, and far away from systems with
%eigenvalues on the imaginary axis, it is immune to such uncertainties causing instability.
}
\end{example}

Asymptotic stability of a general linear dynamical system in the presence of uncertainty can only be guaranteed when the system has a
reasonable \textit{distance to instability}, i.e., to
%which means that the system is at a reasonable distance to
systems with
purely imaginary eigenvalues. Hence, an estimation of the distance to instability, which is an optimization problem
over admissible perturbations, is an important ingredient of a proper stability analysis.

In this paper we focus on the stability analysis of large-scale (and typically sparse) DH systems of the form~\eqref{DH} in the presence of uncertainties in the coefficients. Considering perturbations in one of the coefficient matrices $J$, $R$, $Q$ of \eqref{DH}, in \cite{MehMS16a} characterizations for several structured distances to instability were derived under restricted perturbations of the form $B\Delta C$, with restriction matrices $B \in\mathbb{C}^ {n\times m}$ and $C \in \C^{p\times n}$ of full column rank and full row rank, respectively, allowing selected parts of the matrices $J, R, Q$ to be unperturbed.
We will use an adaptation of the subspace framework introduced in \cite{Aliyev2017}, based on model order reduction techniques to compute the stability radii using the characterizations in \cite{MehMS16a}.

The paper is organized as follows.
Section \ref{defns}  provides formal definitions of the structured and unstructured
stability radii, and in Section \ref{char} we briefly recall the characterizations of these stability radii derived in \cite{MehMS16a}. Section \ref{sec:DH_unstructured_dist} proposes subspace
frameworks for computing the unstructured stability radii problems exploiting these characterizations.
The performance of the proposed frameworks for the unstructured stability radii is illustrated
via the disk brake example and several synthetic examples
in Section \ref{sec:DH_numexps}. Finally, Section \ref{sec:structured}
focuses on the structured stability radius when only
%, restricting ourselfs to the case that
$R$ is subject to Hermitian
perturbations. We first discuss how small-scale problems can be solved in Section~\ref{sec:small_scale_st}. A new  structured subspace framework is discussed in Section~\ref{sec:large_scale_st} followed by several numerical examples in Section \ref{sec:str_num_exp}.

\section{Unstructured and Structured Stability Radii}\label{defns}

In \cite{MehMS16a} computable formulas for DH systems of the form \eqref{DH} are derived using several notions of
unstructured and structured stability radii.
In this section we briefly recall the main definitions and results from \cite{MehMS16a} for restricted perturbations in one of the following forms.
\begin{equation}\label{pert_sys}
\left( \left(J+B\Delta C\right) - R\right)Q , \;
\left(J - \left(R+B\Delta C\right)\right)Q , \; \text{or} \;
 \left(J - R \right)\left(Q+B\Delta C\right).
\end{equation}
In the following ${\rm i} {\mathbb R}$ denotes the imaginary axis in the complex plane, $\Lambda(A)$ the spectrum of a matrix $A$, and $\|A\|_2$
the spectral norm.
\begin{definition}\label{defn:unst_stab_radii}
Consider a DH system of the form \eqref{DH} and suppose that $B \in \mathbb{C}^{n \times m}$ and
$C \in \mathbb{C}^{p \times n}$ are given full rank restriction matrices.
\begin{itemize}
\item[(i)] The \emph{unstructured restricted stability radius} $r(J; B,C)$  with respect to perturbations of $J$ under
the restriction matrices $B$, $C$ is defined by
\[
	r(J; B,C) := \inf \{\|\Delta\|_2 : \Delta\in \mathbb{C}^{m\times p}, \Lambda (\left( \left(J+B\Delta C\right) - R\right)Q)\cap {\rm i}\mathbb{R} \neq \emptyset\}.
\]
\item[(ii)] The \emph{unstructured restricted stability radius} $r(R; B,C)$  with respect to perturbations of $R$ under the restriction matrices $B$, $C$ is defined by
\[
	r(R; B,C) := \inf \{\|\Delta\|_2 : \Delta\in \mathbb{C}^{m\times p}, \Lambda (\left(J - \left(R+B\Delta C\right)\right)Q\cap {\rm i}\mathbb{R} \neq \emptyset\}.
\]
\item[(iii)] The \emph{unstructured restricted stability radius} $r(Q; B,C)$  with respect to perturbations of $Q$ under the restriction matrices $B$, $C$ is defined by
\[
	r(Q; B,C) := \inf \{\|\Delta\|_2 : \Delta\in \mathbb{C}^{m\times p}, \Lambda \left(J - R \right)\left(Q+B\Delta C\right)\cap {\rm i}\mathbb{R} \neq \emptyset\}.
\]
\end{itemize}
\end{definition}
%
%In various applications the perturbations due to uncertainties have structure, consider as an $example the analysis if brake squeal.
%
\begin{example} \label{ex:ex2}{\rm  Consider again Example~\ref{ex:ex1}. Here it is of interest to know whether (for given $\Omega$) the norm of the non-symmetric matrix $N$ is tolerable to preserve the asymptotic stability
of the DH system in (\ref{insta}) without the circulatory effects. The relevant stability radius for a specified $\Omega$ is given by
\begin{equation}\label{eq:BH_stabradii}
	\inf
	\left\{
			\| N \|_2	\;	\bigg|	\;
		\Lambda
		\left(
		{\mathcal A}(N) \right)	\cap	\ri {\mathbb R}	\neq \emptyset
	\right\},
\end{equation}
where
\begin{equation*}
	\begin{split}
	{\mathcal A}(N)
		&	:=
		\left(		
			\begin{bmatrix}   -G(\Omega) & -(K(\Omega) + \frac12N) \\  K(\Omega) + \frac12N^H & 0 \end{bmatrix}
						-		
			\begin{bmatrix}  D(\Omega) & \frac12N \\ \frac12N^H & 0 \end{bmatrix}
		\right) 		
			\begin{bmatrix}  M & 0 \\ 0 & K(\Omega) \end{bmatrix}^{-1}	\\
		&	=
		\left\{		
			\begin{bmatrix}   -G(\Omega) & -K(\Omega)  \\  K(\Omega) & 0 \end{bmatrix}
						-	
			\left(	
			\begin{bmatrix}  D(\Omega) & 0 \\ 0 & 0 \end{bmatrix}
						+
			\begin{bmatrix}  0 & N \\ 0 & 0 \end{bmatrix}
			\right)
		\right\}		
			\begin{bmatrix}  M & 0 \\ 0 & K(\Omega) \end{bmatrix}^{-1}.
	\end{split}
\end{equation*}
Hence, the stability radius in (\ref{eq:BH_stabradii}) corresponds to the unstructured stability radius
$r(R; B, C)$ with the restriction matrices
$
	B =
		\begin{bmatrix}
			I	&	0
		\end{bmatrix}^T
$
and
$
	C =
		\begin{bmatrix}
			0	&	I
		\end{bmatrix}
$
with $n\times n$ blocks.

Furthermore, in the definition of ${\mathcal A}(N)$ the skew-Hermitian perturbations are more
influential on the imaginary parts of its eigenvalues, whereas the Hermitian perturbations
are more effective in moving its eigenvalues towards the imaginary axis. This leads us to
the consideration of the stability radius
\begin{equation}\label{eq:BH_stabradii2}
	\inf
	\left\{
			\| N \|_2	\;	\bigg|	\;
		\Lambda
		\left(
		{\mathcal A}_0(N) \right)	\cap	\ri {\mathbb R}	\neq \emptyset
	\right\}
\end{equation}
with
\[
	{\mathcal A}_0(N)
		:=
	\left(		
			\begin{bmatrix}   -G(\Omega) & -K(\Omega) \\  K(\Omega) & 0 \end{bmatrix}
						-		
			\begin{bmatrix}  D(\Omega) & \frac12N \\ \frac12N^H & 0 \end{bmatrix}
		\right) 		
			\begin{bmatrix}  M & 0 \\ 0 & K(\Omega) \end{bmatrix}^{-1}.
\]
}
\end{example}
Examples such as Example~\ref{ex:ex2}  motivate the following definition of the structured stability radius in \cite{MehMS16a}.
\begin{definition}\label{defn:st_stab_radii}
Consider a DH system of the form \eqref{DH} and suppose that $B \in \mathbb{C}^{n \times m}$ is a given
restriction matrix. The \emph{structured restricted stability radius} with respect to Hermitian
perturbations of $R$ under the restriction $B$ is defined by
\begin{equation}\label{eq:structured_radii_defn}
	\begin{split}
	r^{\rm Herm}(R; B) := \inf\{ \|\Delta \|_2 \; | \; \Delta = \Delta^H, \;\;\;		\hskip 25ex	\\
		\hskip 23ex
		\Lambda \big( (J - R)Q - (B\Delta B^H)Q \big) \cap {\rm i}\mathbb R\neq \emptyset\}.
	\end{split}
\end{equation}
\end{definition}

% Typically, the restriction matrices  $B$, and $C$ in the unstructured case, have very few columns and
% a few rows, respectively, compared to the system size.
%In the context of the brake-squeal problem, this amounts to
%constraining the matrix $N$ to have small rank with prescribed column and row spaces.
% In this case, both $r(R; B, B^H)$ and $r^{\rm Herm}(R; B)$ bound the stability radii
% (\ref{eq:BH_stabradii}) and (\ref{eq:BH_stabradii2}) from below. But $r^{\rm Herm}(R; B)$
% is in general larger than the unstructured stability radius, hence leads to a tighter lower
% bound.

\section{Characterizations for Stability Radii}\label{char}

The numerical techniques that we will derive for the computation
of the unstructured and structured stability radii exploit
eigenvalue or singular value optimization characterizations derived in \cite{MehMS16a}.
%We start with the characterizations for the unstructured stability radii.
 %$r(Q;B,C)$, $r(J;B,C)$ and $r(R;B,C)$.

\begin{theorem}\label{thm_unstr}
For an asymptotically stable DH system of the form \eqref{DH} and restriction matrices
$B\in\mathbb{C}^{n \times m}$, $C \in\mathbb C^{p \times n}$ the following assertions hold:
\begin{itemize}
\item[(i)] The unstructured stability radius $r(R; B,C)$ is finite if and only if
$G_R(\omega) := CQ({\rm i}\omega I_n-(J - R)Q)^{-1}B$ is not identically
zero if and only if $r(J; B,C)$ is finite. If $r(R; B,C)$ is finite, then we have
\begin{equation}\label{th2}
r(R; B,C) = r(J;B,C) = \inf_{\omega\in\mathbb R}\frac{1}{\|G_{R}(\omega)\|_2}.
\end{equation}
\item[(ii)] The unstructured stability radius $r(Q; B,C)$ is finite if and only if
$G_Q(\omega) : = C({\rm i}\omega I_n-(J - R)Q)^{-1}(J-R)B$ is not identically zero
for all $\omega\in\mathbb R$. If\, $r(Q; B,C)$ is finite, then we have
\begin{equation}\label{th1}
r(Q; B,C) = \inf_{\omega\in\mathbb R}\frac{1}{\|G_{Q}(\omega)\|_2}.
\end{equation}
\end{itemize}
\end{theorem}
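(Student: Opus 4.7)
The plan is a Hinrichsen--Pritchard--style reduction: fix $\omega\in\R$, translate the condition that $\ri\omega$ be an eigenvalue of the perturbed matrix into a small determinantal equation on $G_R(\omega)\Delta$ (for part (i)) or $G_Q(\omega)\Delta$ (for part (ii)), minimize $\|\Delta\|_2$ subject to that equation by a singular value argument, and then take the infimum over $\omega$. Asymptotic stability of \eqref{DH} gives $\Lambda((J-R)Q)\subset\mathbb C^{-}$, so the resolvent $(\ri\omega I-(J-R)Q)^{-1}$ exists for every $\omega\in\R$ and both $G_R$ and $G_Q$ are well defined and rational in $\omega$.

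For part (i), I would treat the two perturbation patterns simultaneously. In either case $\ri\omega$ is an eigenvalue of the perturbed matrix iff there exists $x\neq 0$ with $(\ri\omega I-(J-R)Q)x=\pm B\Delta CQx$, the sign being $+$ for $J\mapsto J+B\Delta C$ and $-$ for $R\mapsto R+B\Delta C$. Inverting the resolvent, pre-multiplying by $CQ$, and setting $y:=CQx$ collapses this to $y=\pm G_R(\omega)\Delta y$. Since replacing $\Delta$ by $-\Delta$ flips the sign without changing $\|\Delta\|_2$, the $J$- and $R$-problems have the \emph{same} feasibility set at every $\omega$, and hence the same infimum, delivering $r(J;B,C)=r(R;B,C)$. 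The smallest $\|\Delta\|_2$ for which $1\in\Lambda(G_R(\omega)\Delta)$ equals $1/\|G_R(\omega)\|_2$ by the classical lemma (lower bound from $1\le\|G_R(\omega)\|_2\|\Delta\|_2$, upper bound from the rank-one $\Delta=\sigma^{-1}vu^H$ formed from the leading singular triple of $G_R(\omega)$). Taking the infimum over $\omega$ yields \eqref{th2}. For finiteness, $r(R;B,C)<\infty$ iff some $\omega_0\in\R$ admits an admissible $\Delta$ iff $G_R(\omega_0)\neq 0$ for some $\omega_0$ iff $G_R\not\equiv 0$, using that $G_R$ is rational with no real poles; the same chain applies verbatim to $r(J;B,C)$.

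Part (ii) follows the identical template: writing $(\ri\omega I-(J-R)Q)x=(J-R)B\Delta Cx$ and setting $y:=Cx$ gives $y=G_Q(\omega)\Delta y$, and the same singular value minimization delivers \eqref{th1} together with its finiteness criterion. The step I expect to be most delicate is the two-way bridge between the reduced condition $1\in\Lambda(G(\omega)\Delta)$ and the full eigenvalue condition: I must rule out that the rank-one SVD-based minimizer ``misses'' the full-scale problem by producing $x=0$. This is handled by the lift formula $x=\mp(\ri\omega I-(J-R)Q)^{-1}B\Delta y$ (respectively, $x=(\ri\omega I-(J-R)Q)^{-1}(J-R)B\Delta y$): with $\Delta=\sigma^{-1}vu^H$ and $y\propto v\neq 0$ built from the leading singular triple, one recovers $x\neq 0$ automatically from $CQx=y$ (respectively, $Cx=y$), which closes the argument and simultaneously justifies that the rank constraints on $B$ and $C$ are used in exactly the way the theorem requires.
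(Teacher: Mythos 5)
Your proposal is correct, and it is essentially the canonical Hinrichsen--Pritchard reduction; note that this paper does not prove Theorem~\ref{thm_unstr} at all but imports it from \cite{MehMS16a}, where the derivation follows the same route (reduce the imaginary-axis eigenvalue condition to $1\in\Lambda(G(\omega)\Delta)$, minimize $\|\Delta\|_2$ via the leading singular triple, take the infimum over $\omega$). The one step you should make explicit is the forward direction of the bridge: for an eigenvector $x\neq 0$ of the perturbed matrix you need $y=CQx\neq 0$ (resp.\ $y=Cx\neq 0$), which follows because $y=0$ would force $(\ri\omega I-(J-R)Q)x=0$ and contradict asymptotic stability --- this is in fact the only place that hypothesis is used, whereas the full-rank assumptions on $B$ and $C$ play no essential role in the argument.
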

For the structured stability radius and Hermitian perturbations of $R$ the following result is obtained in
\cite{MehMS16a}.
%that characterize the structured%
%stability radius $r^{\rm Herm}(R; B)$, when perturbations of $R$ are constrained to
%be Hermitian, yet they are possibly indefinite.
%
\begin{theorem}\label{thm_str_Herm}
For an asymptotically stable DH system of the form \eqref{DH}, and a restriction
matrix $B\in\mathbb{C}^{n\times m}$ of full column rank, let
\begin{enumerate}
	\item $W(\lambda) := (J-R)Q - \lambda I$ for a given $\lambda \in {\mathbb C}$ such
	that $W(\lambda)$ is invertible,
	\item $L(\lambda)$ be a lower triangular Cholesky factor of
			\begin{center}
			$\widetilde{H}_0(\lambda) := B^H W(\lambda)^{-H} Q B B^H Q W(\lambda)^{-1} B$,
			\end{center}
			i.e., $L(\lambda)$ is a lower triangular matrix satisfying $\widetilde{H}_0(\lambda) = L(\lambda) L(\lambda)^H$,
	\item $H_0(\lambda) := L(\lambda)^{-1} L(\lambda)^{-H}$,
	\item $H_1(\lambda) := \ri (\widetilde{H}_1(\lambda)  -  \widetilde{H}_1(\lambda)^H)$, where
	$\widetilde{H}_1(\lambda) := L(\lambda)^{-1} B^H W(\lambda)^{-H} Q B L(\lambda)^{-H}$.
\end{enumerate}
Then $r^{\rm Herm}(R; B)$ is finite, and given by
\begin{equation*}
	r^{\rm Herm}(R; B)
		\; = \;
	\left\{
	\inf_{\omega \in {\mathbb R}} \: \sup_{t \in {\mathbb R}} \: \lambda_{\min} (H_0(\ri \omega) + t H_1 (\ri \omega))
	\right\}^{1/2},
\end{equation*}
where $\lambda_{\min}( \cdot )$ denotes the smallest eigenvalue of its Hermitian matrix argument,
and the inner supremum is attained if and only if $H_1({\rm i} \omega)$ is indefinite.
\end{theorem}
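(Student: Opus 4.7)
The plan is to reduce the defining spectral condition to a compact equation on $\mathbb{C}^m$, characterize the smallest-norm Hermitian perturbation achieving this equation in closed form, and then apply an S-lemma / minimax argument to obtain the $\sup_t \lambda_{\min}$ expression.

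First, I translate the spectral constraint into a low-dimensional equation. Since asymptotic stability forces $W(i\omega)$ to be invertible for every $\omega \in \mathbb{R}$, the condition $i\omega \in \Lambda\big((J-R)Q - B\Delta B^H Q\big)$ with eigenvector $v$ is equivalent to $v = W(i\omega)^{-1} B\Delta B^H Q v$. Left-multiplying by $B^H Q$ and setting $u := B^H Q v$ and $M(\lambda) := B^H Q W(\lambda)^{-1} B$ reduces this to $u = M(i\omega)\,\Delta\,u$. The equivalence $v \neq 0 \iff u \neq 0$ follows from the invertibility of $W(i\omega)$ and the full column rank of $B$.

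Next, for a fixed $\omega$ I minimise $\|\Delta\|_2$ over Hermitian $\Delta$ satisfying $M(i\omega)\Delta u = u$ for some nonzero $u$. The key lemma is: for nonzero $u, z \in \mathbb{C}^m$, a Hermitian $\Delta$ with $\Delta u = z$ exists if and only if $u^H z \in \mathbb{R}$, in which case its minimum spectral norm equals $\|z\|/\|u\|$, realised by a rank-at-most-two Hermitian matrix supported on $\mathrm{span}(u, z)$ with nonzero eigenvalues $\pm\|z\|/\|u\|$. Applying this with $z = M(i\omega)^{-1} u$ yields
\[
(r^{\mathrm{Herm}}(R; B))^2
\;=\;
\inf_{\omega \in \mathbb{R}}
\;
\inf_{\substack{u \neq 0 \\ u^H M(i\omega)^{-1} u \in \mathbb{R}}}
\frac{\|M(i\omega)^{-1} u\|^2}{\|u\|^2}.
\]
Successive substitutions $w := M(i\omega)^{-1}u$ and then $u := L(i\omega)^H w$, combined with the identity $LL^H = M^H M$, produce
\[
u^H H_0(i\omega)\, u = \|w\|^2,
\quad
u^H u = \|M w\|^2,
\quad
u^H H_1(i\omega)\, u = 2\,\mathrm{Im}(w^H M w),
\]
so the inner infimum rewrites as $\inf_{u\neq 0,\, u^H H_1 u = 0}(u^H H_0 u)/(u^H u)$.

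The hard part is the final step, in which I swap this infimum with a supremum over a scalar Lagrange multiplier. Courant-Fischer gives the weak-duality bound $\sup_t \lambda_{\min}(H_0 + tH_1) \leq \inf_{u\neq 0,\, u^H H_1 u = 0}(u^H H_0 u)/(u^H u)$, and strong duality follows from Dines' theorem on the convexity of the joint numerical range of two Hermitian matrices. When $H_1(i\omega)$ is indefinite there are unit vectors $u_\pm$ with $u_+^H H_1 u_+ > 0$ and $u_-^H H_1 u_- < 0$, so Slater's condition holds, the optimal $t$ is finite, and the supremum is attained; when $H_1(i\omega)$ is semidefinite, $\sup_t \lambda_{\min}(H_0 + tH_1) = \lambda_{\min}(H_0)$ is only approached as $t \to \pm\infty$, which is exactly the attainment clause of the theorem. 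Squaring and taking the infimum over $\omega$ then yields the claimed formula.
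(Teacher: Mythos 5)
The paper itself contains no proof of this theorem: it quotes the characterization from \cite{MehMS16a} and merely reparametrizes it, replacing an orthonormal basis of $\ker\left((I-BB^+)W(\lambda)\right)$ by the Cholesky factor $L(\lambda)$. Your argument is therefore a self-contained reconstruction, and its main line is sound and runs along essentially the same tracks as the cited reference: the reduction of ${\rm i}\omega \in \Lambda\big((J-R)Q - B\Delta B^H Q\big)$ to the $m$-dimensional equation $u = M({\rm i}\omega)\Delta u$ with $M(\lambda) := B^H Q W(\lambda)^{-1}B$; the Hermitian structured-mapping lemma giving minimal norm $\|M^{-1}u\|/\|u\|$ subject to $u^H M^{-1}u \in {\mathbb R}$; the change of variables through $L$ (using $LL^H = M^H M$) that turns the constraint into $u^H H_1 u = 0$ and the objective into the Rayleigh quotient of $H_0$; and strong duality in the multiplier $t$ via convexity of the joint numerical range of the Hermitian pair $(H_0,H_1)$ (for complex Hermitian matrices this is the Toeplitz--Hausdorff/Brickman convexity rather than Dines' theorem, but the convexity you need is correct, and your argument that indefiniteness of $H_1$ forces $\lambda_{\min}(H_0+tH_1)\to-\infty$ as $t\to\pm\infty$, hence attainment, is valid).

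Two points need repair. First, the theorem asserts that $r^{\rm Herm}(R;B)$ is \emph{finite}, and your proof never exhibits a single $\omega$ at which the feasible set $\{u \neq 0 : u^H M({\rm i}\omega)^{-1}u \in {\mathbb R}\}$ is nonempty, equivalently at which $H_1({\rm i}\omega)$ fails to be definite. This is not automatic: for $|\omega|$ large, $M({\rm i}\omega) \approx ({\rm i}/\omega)\, B^H Q B$, which makes $H_1({\rm i}\omega)$ definite, so finiteness requires a separate construction (this is where the cited reference does real work). Second, your description of the semidefinite case is off: if $H_1({\rm i}\omega)$ is definite then $\sup_t \lambda_{\min}(H_0 + tH_1) = +\infty$, and if it is semidefinite and singular the supremum equals the smallest eigenvalue of the compression of $H_0$ to $\ker H_1$, not $\lambda_{\min}(H_0)$. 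Neither error invalidates the identity $\eta^{\rm Herm}(R;B,{\rm i}\omega)^2 = \sup_t \lambda_{\min}(H_0+tH_1)$ --- in those cases both sides are $+\infty$, or both equal the kernel-compressed minimum --- but the attainment clause as you justify it does not follow from what you wrote.
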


The characterization in \cite{MehMS16a} is presented in a slightly different form. In particular,
it is stated in terms of an orthonormal basis $U(\lambda)$ for the kernel of $\left((I - BB^+) W(\lambda)\right)$.
It turns out that $U(\lambda)$ does not have to be orthonormal, rather the theorem can be stated in terms of any basis for the kernel of $\left((I - BB^+) W(\lambda)\right)$. In Theorem~\ref{thm_str_Herm}, we have employed a particular basis that simplifies the formulas and facilitates the computation.
%,namely $W(\lambda)^{-1} B$.

\section{Computation of the Unstructured Stability Radii for Large-Scale Problems}\label{sec:DH_unstructured_dist}
In this section we study the computation of unstructured stability radii for large-scale DH systems using the characterizations of
$r(R; B,C)$, $r(Q; B,C)$, $r(J;B,C)$ given in Theorem~\ref{thm_unstr}.
One easily observes that
\begin{equation*}
	\begin{split}
	G_R({\rm i} \omega) & \; := \; CQ({\rm i}\omega I_n-(J - R)Q)^{-1}B,  \\	
	G_Q({\rm i} \omega) & \; := \; C({\rm i}\omega I_n-(J - R)Q)^{-1}(J-R)B
	\end{split}
\end{equation*}
can be viewed as restrictions of transfer functions of control systems to the imaginary axis. To be precise,
setting  $\widetilde{A} =  (J - R ) Q$, $\widetilde{B} = B$  and $\widetilde{C} = C Q$,
the matrix-valued function $G_R({\rm i} \omega) := CQ({\rm i}\omega I_n-(J - R)Q)^{-1}B \;$ becomes
\begin{equation}\label{def_GR}
\widetilde{G}_R({\rm i} \omega) := \widetilde{C}({\rm i}\omega I_n - \widetilde{A})^{-1} \widetilde{B}
\end{equation}
which can be considered as the transfer function of the
%linear time-invariant control
system
\begin{equation}\label{eq:Disc}
	\dot x = \widetilde{A} x + \widetilde{B} u, \quad
	y = \widetilde{C} x
\end{equation}
on the imaginary axis.
Theorem \ref{thm_unstr} suggests that if
$\widetilde{G}_R({\rm i} \omega) := \widetilde{C}({\rm i}\omega I_n - \widetilde{A})^{-1} \widetilde{B}$
is not identically zero, then
%the unstructured stability radii
$r(R; B,C)$ and $r(J; B,C)$
are finite, and characterized by
%by the $\mathcal{H}_{\infty}$-norm of $\widetilde{G}_R(\omega)$. More specifically, we have
%
\begin{equation}\label{char1}
\begin{split}
	r(R; B,C) \; = \; r(J; B,C) & \; = \; \inf_{\omega\in\mathbb{R}}\frac{1}{ \| \widetilde{G}_R({\rm i} \omega) \|_2}		\\
					& \; = \ \frac{1}{\sup_{\omega\in\mathbb{R}} \| \widetilde{G}_R({\rm i} \omega) \|_2}
					   \; = \; \frac{1}{\;\; \| \widetilde{G}_R \|_{\mathcal H_\infty}},
\end{split}
\end{equation}
where $\| \widetilde{G}_R \|_{{\mathcal H}_\infty} := \sup_{\omega \in {\mathbb R}} \: \sigma_{\max} (\widetilde{G}_R({\rm i} \omega))$
denotes the ${\mathcal H}_\infty$-norm of $\widetilde{G}_R$, and $\sigma_{\max}(\cdot)$ denotes the maximal singular value.

For the stability radius $r(Q; B,C)$, consideration of $G_Q({\rm i} \omega) : = C({\rm i}\omega I_n-(J - R)Q)^{-1}(J-R)B$,
by setting $\widetilde{A} = (J - R)Q$, $\widetilde{B} = (J - R )B$ and $\widetilde{C} = C$, leads us to a
similar characterization.

\subsection{A Subspace Framework}\label{sec:sf_descriptor}
Recently, in \cite{Aliyev2017}, a subspace framework for the computation of the ${\mathcal H}_\infty$-norm of a large-scale system has been
proposed, which is inspired from model order reduction techniques,
and has made the computation  of ${\mathcal H}_\infty$-norms feasible for very large control systems. We will now discuss how to use these techniques
%The arguments at the opening of this section indicate the applicability of that subspace
%framework in \cite{Aliyev2017}
for the computation of the unstructured stability radii
$r(R; B,C)$, $r(J; B,C)$, $r(Q; B,C)$ in the large-scale setting.

To briefly summarize the iterative procedure in the subspace framework of~\cite{Aliyev2017}, let us assume that in iteration $k$, two subspaces ${\mathcal V}_k$ and ${\mathcal W}_k$ of equal dimension have been determined, as well as
matrices $V_k$ and $W_k$ whose columns span orthonormal bases for these subspaces. Applying a Petrov-Galerkin projection to system (\ref{eq:Disc}), restricts the state $x$ to ${\mathcal V}_k$, i.e., in (\ref{eq:Disc}) we replace $x$
by $V_k x_k$, and imposes that the residual after this restriction is orthogonal to ${\mathcal W}_k$. This projection gives rise to a reduced order system
\begin{equation}\label{eq:red_sys1}
	\dot x_k = \widetilde{A}_k x_k+ \widetilde{B}_k u, \quad
	y_k = \widetilde{C}_k x_k,
\end{equation}
with
\begin{equation}\label{eq:red_sys2}
	\widetilde{A}_k := W_k^H \widetilde{A} V_k,	\;\;	\widetilde{B}_k := W_k^H \widetilde{B},	\;\;	
	\widetilde{C}_k = \widetilde{C} V_k.
\end{equation}
Then the ${\mathcal H}_\infty$-norm of a transfer function $G(s) :=  \widetilde{C}( s I_n - \widetilde{A})^{-1} \widetilde{B}$ in (\ref{eq:Disc}) can be approximated by computing the ${\mathcal H}_\infty$-norm of
\[
	G_k(s) := \widetilde{C}_k( s I_k - \widetilde{A}_k)^{-1} \widetilde{B}_k
\]
for instance by employing the method in \cite{Boyd1990} or \cite{Bruinsma1990}, in particular, by computing
$	\omega_{k+1}	:=	\argmax_{\omega \in {\mathbb R}} \: \sigma_{\max} ( G_k( \ri \omega) )$.
This is computationally cheap if the dimensions of ${\mathcal V}_k, {\mathcal W}_k$ are small. Once $\omega_{k+1}$ has been computed, then the subspaces ${\mathcal V}_k$ and ${\mathcal W}_k$ are expanded into larger subspaces
${\mathcal V}_{k+1}$ and ${\mathcal W}_{k+1}$ in such a way that the corresponding reduced
transfer function $G_{k+1}(s)$ satisfies the \emph{Hermite interpolation conditions}
\begin{equation}\label{eq:Hermite_interpolate}
	\begin{split}
	\sigma_{\max} (G(\ri \omega_{k+1}))	& = \sigma_{\max} (G_{k+1}(\ri \omega_{k+1})),	\\		
	\sigma_{\max}' (G(\ri \omega_{k+1}))	& = \sigma_{\max}' (G_{k+1}(\ri \omega_{k+1})),
	\end{split}
\end{equation}
where  $\sigma_{\max}'(G({\rm i}\omega))$ denotes the derivative of $\sigma_{\max} (G({\rm i} \omega))$
with respect to $\omega$.
Denoting the image space of a matrix by $A$ by $  {\rm Im} (A)$, it is shown in \cite{Aliyev2017} that %the choices
\[
	\begin{split}
	{\mathcal V}_{k+1} & := {\mathcal V}_k \oplus {\rm Im} (( \ri \omega_{r+1} I_n - \widetilde{A})^{-1} \widetilde{B}), \\
	{\mathcal W}_{k+1} & := {\mathcal W}_k \oplus {\rm Im} ( (\widetilde{C} ( \ri \omega_{k+1} I_n - \widetilde{A})^{-1})^H),
	\end{split}
\]
more specifically the inclusions
\[
	{\rm Im} (( \ri \omega_{k+1} I_n - \widetilde{A})^{-1} \widetilde{B}) \subseteq {\mathcal V}_{k+1},\
%		\quad	{\rm and}	\quad
	{\rm Im} ( (\widetilde{C} ( \ri \omega_{k+1} I_n - \widetilde{A})^{-1})^H) \subseteq {\mathcal W}_{k+1},
\]
ensure that the Hermite interpolation conditions (\ref{eq:Hermite_interpolate}) are satisfied.
The procedure is then repeated with the expanded subspaces ${\mathcal V}_{k+1}$, ${\mathcal W}_{k+1}$. In \cite{Aliyev2017}, it is shown that the sequence $\{ \omega_k \}$  converges at a super-linear rate and satisfies
\[
	\begin{split}
	\sigma_{\max} (G(\ri \omega_{j}))	& = \sigma_{\max} (G_{k}(\ri \omega_{j})),	\\		
	\sigma_{\max}' (G(\ri \omega_{j}))	& = \sigma_{\max}' (G_{k}(\ri \omega_{j}))
	\end{split}
\]
for $j = 1,\dots,k$.
% constructed with the procedure described.

A disadvantage of this general approach is that even if $\widetilde{A} = (J - R)Q$ has DH structure, this is not necessarily
true for  $\widetilde{A}_k$, so it cannot be guaranteed from the structure that the reduced system is stable.
In the next section we modify the procedure of~\cite{Aliyev2017} to preserve the DH structure.

\subsection{A Structure Preserving Subspace Framework}\label{sec:sf_PH}
In this subsection we derive an interpolating,  DH structure preserving version of the robust subspace projection framework.
% and in this way  guarantee that the reduced
%system is stable in every iteration step.
Structure preserving subspace projection methods in the context of model order reduction of
large-scale PH and DH systems have been proposed in \cite{Gugercin_2012,Gugercin_2009,Polyuga_2009,Polyuga_2011,Schaft_2009,Wu_2014}.
Our approach is inspired by  \cite{Gugercin_2012} and uses a general interpolation result from \cite{Gallivan2005}.
\begin{theorem}\label{thm:Gal_interpolate}
Let $G(s)$ be the transfer function for a full order system as in (\ref{eq:Disc}), and let $G_k(s)$ be the transfer
function for the reduced system defined by (\ref{eq:red_sys1}), (\ref{eq:red_sys2}).
\begin{enumerate}
\item[\bf (i)] \emph{(Right Tangential Interpolation)} For given $\widehat{s} \in {\mathbb C}$ and $\widehat{b} \in {\mathbb C}^m$, if
 	\begin{equation}\label{eq:subspace_incl}
		\left[ (\widehat{s}I - \widetilde{A})^{-1} \right]^\ell \widetilde{B} \widehat{b}	\; \in \; {\mathcal V}_k		\quad \quad {\rm for} \;\;	\ell = 1, \dots, N,
 	\end{equation}
 	and $\: {\mathcal W}_k$ is such that $W_k^H V_k = I$, then we have
	\begin{equation}\label{eq:tangent_interpolate}
	 	G^{(\ell)} (\widehat{s}) \widehat{b}	=	G^{(\ell)}_k (\widehat{s}) \widehat{b}	\quad\quad {\rm for} \;\;	\ell = 0, \dots, N-1
	 \end{equation}
	 provided that both \, $\widehat{s} I - \widetilde{A}$ and\, $\widehat{s} I - \widetilde{A}_k$ are invertible.
\item[\bf (ii)] \emph{(Left Tangential Interpolation)} For a given $\widehat{s} \in {\mathbb C}$ and $\widehat{c} \in {\mathbb C}^p$, if
	\begin{equation}\label{eq:subspace_inclusion_left}
		\left( \widehat{c}^{\: H} \widetilde{C} \left[ (\widehat{s}I - \widetilde{A})^{-1} \right]^\ell \right)^H \; \in \; {\mathcal W}_k						\quad \quad {\rm for} \;\;	\ell = 1, \dots, N,
 	\end{equation}
 	and $\: {\mathcal V}_k$ is such that $W_k^H V_k = I$, then we have
	\begin{equation}\label{eq:tangent_interpolate_left}
	 	\widehat{c}^{\: H} G^{(\ell)} (\widehat{s}) 	=	\widehat{c}^{\: H} G^{(\ell)}_k (\widehat{s}) 	
														\quad\quad {\rm for} \;\;	\ell = 0, \dots, N-1
	 \end{equation}
	 provided that both \, $\widehat{s} I - \widetilde{A}$ and \, $\widehat{s} I - \widetilde{A}_k$ are invertible.
\end{enumerate}
\end{theorem}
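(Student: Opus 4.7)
The plan is to prove (i) by a direct induction on $\ell$, exploiting the closed-form expression for the derivatives of a rational transfer function together with the identity $V_k W_k^H V_k = V_k$ that follows from $W_k^H V_k = I$. Part (ii) will then follow by a transposed/adjoint argument mirroring (i). As a first step, I would observe that differentiating $G(s) = \widetilde{C}(sI - \widetilde{A})^{-1}\widetilde{B}$ repeatedly yields $G^{(\ell)}(s) = (-1)^\ell \ell!\, \widetilde{C}\,(sI - \widetilde{A})^{-(\ell+1)} \widetilde{B}$, and analogously for $G_k^{(\ell)}(s)$. Consequently, (\ref{eq:tangent_interpolate}) is equivalent to establishing
\begin{equation*}
\widetilde{C}(\widehat{s}I - \widetilde{A})^{-\ell} \widetilde{B}\widehat{b} \; = \; \widetilde{C}_k (\widehat{s}I - \widetilde{A}_k)^{-\ell} \widetilde{B}_k \widehat{b} \qquad \text{for } \ell = 1, \ldots, N.
\end{equation*}

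Next I would introduce the shorthand $v_\ell := [(\widehat{s}I - \widetilde{A})^{-1}]^\ell \widetilde{B}\widehat{b}$ and $z_\ell := [(\widehat{s}I - \widetilde{A}_k)^{-1}]^\ell \widetilde{B}_k \widehat{b}$. By hypothesis (\ref{eq:subspace_incl}), each $v_\ell \in \mathcal{V}_k$ for $\ell = 1, \ldots, N$, so there exist coordinate vectors $x_\ell$ with $v_\ell = V_k x_\ell$. The core claim is that $x_\ell = z_\ell$ for $\ell = 1, \ldots, N$, which immediately yields $\widetilde{C} v_\ell = \widetilde{C} V_k x_\ell = \widetilde{C}_k z_\ell$ and hence the desired interpolation identities.

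The inductive argument for this claim is short. For the base case $\ell = 1$, rearranging $v_1 = V_k x_1$ gives $(\widehat{s}I - \widetilde{A}) V_k x_1 = \widetilde{B}\widehat{b}$; left-multiplying by $W_k^H$ and using $W_k^H V_k = I$ produces $(\widehat{s}I - \widetilde{A}_k) x_1 = \widetilde{B}_k \widehat{b}$, so $x_1 = z_1$. For the inductive step, $(\widehat{s}I - \widetilde{A}) v_{\ell+1} = v_\ell = V_k x_\ell$ combined with $v_{\ell+1} = V_k x_{\ell+1}$ and left-multiplication by $W_k^H$ yields $(\widehat{s}I - \widetilde{A}_k) x_{\ell+1} = x_\ell$, which together with $x_\ell = z_\ell$ delivers $x_{\ell+1} = z_{\ell+1}$. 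The invertibility hypotheses on $\widehat{s}I - \widetilde{A}$ and $\widehat{s}I - \widetilde{A}_k$ are precisely what make each of these inversions well defined.

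Part (ii) is handled by an essentially symmetric argument: set $w_\ell := \bigl([(\widehat{s}I - \widetilde{A})^{-1}]^\ell\bigr)^H \widetilde{C}^H \widehat{c}$, which lies in $\mathcal{W}_k$ by (\ref{eq:subspace_inclusion_left}), so $w_\ell = W_k y_\ell$, and then show inductively using $V_k^H W_k = (W_k^H V_k)^H = I$ that $y_\ell = \bigl([(\widehat{s}I - \widetilde{A}_k)^{-1}]^\ell\bigr)^H \widetilde{C}_k^H \widehat{c}$; taking conjugate transposes of the resulting identities delivers (\ref{eq:tangent_interpolate_left}). The main (rather minor) obstacle is bookkeeping: one must keep track of the fact that the biorthogonality $W_k^H V_k = I$, rather than orthonormality of $V_k$ or $W_k$ individually, is what is actually needed, and that it is applied on the correct side at each step of the recursion so that $W_k^H (\widehat{s}I - \widetilde{A}) V_k$ simplifies cleanly to $\widehat{s}I - \widetilde{A}_k$.
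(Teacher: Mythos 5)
Your proof is correct. Note that the paper itself gives no proof of this theorem: it is imported verbatim as a known tangential-interpolation result from the cited reference [Gallivan et al.], so there is nothing in the paper to compare against line by line. Your argument --- reducing the derivative conditions to $\widetilde{C}(\widehat{s}I-\widetilde{A})^{-\ell}\widetilde{B}\widehat{b}=\widetilde{C}_k(\widehat{s}I-\widetilde{A}_k)^{-\ell}\widetilde{B}_k\widehat{b}$ via $G^{(\ell)}(s)=(-1)^{\ell}\ell!\,\widetilde{C}(sI-\widetilde{A})^{-(\ell+1)}\widetilde{B}$, then showing by induction that the coordinate vectors of $v_\ell=[(\widehat{s}I-\widetilde{A})^{-1}]^{\ell}\widetilde{B}\widehat{b}$ in $V_k$ coincide with $z_\ell=[(\widehat{s}I-\widetilde{A}_k)^{-1}]^{\ell}\widetilde{B}_k\widehat{b}$ using only the biorthogonality $W_k^HV_k=I$ and the invertibility of $\widehat{s}I-\widetilde{A}_k$, with the adjoint version for part (ii) --- is exactly the standard proof of this result and is complete.
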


\subsubsection{Computation of $r(R; B,C)$ and $r(J; B,C)$}\label{sec:unstructured_subspace_rR_rJ}
The computation of $r(R; B, C) = r(J; B,C)$ involves the maximization of the largest singular value
of the transfer function $G(s) = CQ (sI - (J-R)Q)^{-1} B$ associated with the system
\begin{equation}\label{eq:PHS}
	\begin{split}
	\dot x &= (J-R) Qx + B u, \\
 	y &= CQx.
	\end{split}
\end{equation}
on the imaginary axis. We make use of Theorem~\ref{thm:Gal_interpolate}
to obtain a reduced order system satisfying the interpolation conditions (\ref{eq:tangent_interpolate})
while retaining the structure in (\ref{eq:PHS}). We, in particular, employ right tangential interpolation
% Recall that the transfer function associated with (\ref{eq:PHS}) is $G(s) := CQ ( sI - (J_R)Q )^{-1} B$
% which we would like to maximize for the computation of $r(R; B, C) = r(J; B,C)$.
for a given $\widehat{s} \in {\mathbb C}$ and $\widehat{b} \in {\mathbb C}^m$, and
choose ${\mathcal V}_k$ as any subspace satisfying (\ref{eq:subspace_incl}).
%,and $V_k$ as any matrix whose columns form an orthonormal basis for ${\mathcal V}_k$.
Let us also define $	W_k	\;\;	:=	\;\;	QV_k (V_k^H Q V_k)^{-1}$,
${\mathcal W}_k := {\rm Im} (W_k)$, so that
\[
	 W_k^H V_k	\;\; = \;\;	I_k
		\quad	{\rm and}	\quad
	( W_k V_k^H )^2		\;\; = \;\; W_k V_k^H,
\]
i.e., $W_k V_k^H$ is an oblique projector onto ${\rm Im}( Q V_k)$.

%Next, in (\ref{eq:PHS}) letting $\widetilde{A} := (J-R)Q$, $\widetilde{C} := CQ$, $\widetilde{B} := B$,
The matrices $\widetilde{A}_k, \widetilde{B}_k, \widetilde{C}_k$ of the
reduced system  (\ref{eq:red_sys1}), (\ref{eq:red_sys2}) for these choices of $V_k$ and $W_k$ then satisfy
\begin{equation}\label{eq:reduced_A}
	\begin{split}
		\widetilde{A}_k 	& \;\; = \;\;	W_k^H \widetilde{A} V_k	=	W_k^H (J-R) Q V_k		=	W_k^H (J-R) W_k V_k^H Q V_k	\\					
					& \;\; = \;\; (J_k - R_k) Q_k,
	\end{split}
\end{equation}
where $J_k := W_k^H J W_k=-J_k^H$,  $R_k := W_k^H R W_k=R_k^H\geq 0$, and $Q_k := V_k^H Q V_k=Q_k^H>0$.
Additionally, $
	\widetilde{C}_k  \;\; = \;\; \widetilde{C} V_k = CQ V_k  = C W_k V_k^H Q V_k = C_k Q_k$,
where $C_k : = C W_k$, and $\widetilde{B}_k \;\; = \;\; W_k^H B  \;\; =: \;\; B_k$.

This construction leads to the following  result of \cite{Gugercin_2012}.
\begin{theorem}%[]
\label{interpol2}
Consider a linear system of the form \eqref{eq:PHS}
with transfer function $G(s) := CQ(s I_n - (J - R)Q)^{-1}B$.
Furthermore, for a given point $\widehat{s}\in\C$ and a given tangent direction $\widehat{b}\in\C^m$,
suppose that $V_k$ is a matrix with orthonormal columns such that
\begin{equation*}
( \widehat{s}I_n - (J - R)Q  )^{-(\ell-1)} (\widehat{s}I_n - (J - R)Q )^{-1}B\widehat{b} \in{\rm Im}(V_k) \quad \text{for} \;\; \ell = 1,\ldots,N.
\end{equation*}
Define $\: W_k := QV_k(V_k^HQV_k)^{-1} \:$ and set
\begin{equation}\label{eq:DH_subspaces}
	\begin{split}
	J_k: = & W_k^H J W_k, \;\; Q_k: = V_k^H Q V_k, \;\; R_k := W_k^H R W_k  \\
	B_k: =& W_k^H B, \;\; C_k: = CW_k.
	\end{split}
\end{equation}
Then the resulting reduced order model
\begin{equation}\label{eq:RPHS}
	\begin{split}
		\dot x_k& \;\; = \;\; (J_k-R_k) Q_k x_k + B_k u,  \\
 		y_k& \;\; = \;\; C_k Q_kx_k
	\end{split}
\end{equation}
is a DH system with transfer function
\begin{equation}
\label{eq:rph}
G_k(s)  \;\; := \;\; C_kQ_k(s I_k - (J_k - R_k)Q_k)^{-1}B_k
\end{equation}
that satisfies
\begin{equation}
G^{(j)}(\hat{s})\hat{b} \;\; = \;\; G^{(j)}_k(\hat{s})\hat{b} \quad \text{for} \quad  j = 0\ldots, N-1,
\end{equation}
where $G^{(j)}(\hat{s})$ denotes the $j$-th derivative of $\; G(s)$ at the point $\hat{s}$.
\end{theorem}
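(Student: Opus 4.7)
The plan is to establish the theorem in two stages: first to verify that the reduced system (\ref{eq:RPHS}) is indeed of DH form, and second to invoke the right tangential interpolation result of Theorem~\ref{thm:Gal_interpolate} with a judicious choice of $\widetilde{A}, \widetilde{B}, \widetilde{C}, V_k, W_k$ and to confirm that the induced Petrov--Galerkin projection produces precisely the realization $(J_k, R_k, Q_k, B_k, C_k)$.

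For the first stage, since $V_k$ has orthonormal (hence linearly independent) columns and $Q$ is Hermitian positive definite, the Gram matrix $V_k^H Q V_k$ is Hermitian positive definite, so $W_k = QV_k(V_k^HQV_k)^{-1}$ is well-defined. Then $Q_k = V_k^HQV_k$ is Hermitian positive definite, while $J_k = W_k^HJW_k$ and $R_k = W_k^HRW_k$ inherit skew-Hermiticity and Hermitian positive semi-definiteness from $J$ and $R$, respectively, by congruence. Hence (\ref{eq:RPHS}) is a bona fide DH system, whose transfer function is (\ref{eq:rph}).

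For the second stage, set $\widetilde{A} := (J-R)Q$, $\widetilde{B} := B$, and $\widetilde{C} := CQ$, so that $G(s) = \widetilde{C}(sI_n - \widetilde{A})^{-1}\widetilde{B}$ fits the framework of Theorem~\ref{thm:Gal_interpolate}. The biorthogonality
\[
W_k^H V_k \;=\; (V_k^H Q V_k)^{-H} V_k^H Q V_k \;=\; I_k
\]
is immediate from the definition of $W_k$, while the hypothesis on $V_k$ is precisely the subspace inclusion (\ref{eq:subspace_incl}) required by part (i) of Theorem~\ref{thm:Gal_interpolate}. It remains to verify that the projected realization
\[
\widetilde{A}_k = W_k^H \widetilde{A} V_k, \quad \widetilde{B}_k = W_k^H \widetilde{B}, \quad \widetilde{C}_k = \widetilde{C} V_k
\]
coincides with $(J_k-R_k)Q_k$, $B_k$, and $C_kQ_k$, so that the reduced transfer function appearing in Theorem~\ref{thm:Gal_interpolate} is identical to $G_k$ in (\ref{eq:rph}).

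The crux of the argument, and the only nonroutine calculation, is the identity
\[
W_k^H (J-R) Q V_k \;=\; (J_k - R_k) Q_k.
\]
This follows from the key observation that $Q V_k = W_k (V_k^H Q V_k) = W_k Q_k$, whence
\[
W_k^H (J-R) Q V_k \;=\; W_k^H (J-R) W_k Q_k \;=\; (J_k - R_k) Q_k.
\]
The same identity $QV_k = W_k Q_k$ also gives $\widetilde{C} V_k = C Q V_k = C W_k Q_k = C_k Q_k$, while $\widetilde{B}_k = W_k^H B = B_k$ is immediate. With the realizations matched, part (i) of Theorem~\ref{thm:Gal_interpolate} delivers $G^{(j)}(\hat{s})\hat{b} = G^{(j)}_k(\hat{s})\hat{b}$ for $j = 0,\ldots,N-1$. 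The main obstacle is thus the bookkeeping around the oblique projector $W_k V_k^H$ onto $\mathrm{Im}(QV_k)$: once the identity $QV_k = W_k Q_k$ is recognized, the reduced model inherits both the DH structure and the tangential Hermite interpolation property directly from the general Petrov--Galerkin framework.
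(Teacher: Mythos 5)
Your proposal is correct and follows essentially the same route as the paper: the paper's derivation preceding the theorem also sets $\widetilde{A}=(J-R)Q$, $\widetilde{B}=B$, $\widetilde{C}=CQ$, uses the identity $QV_k = W_k V_k^H Q V_k = W_k Q_k$ (phrased there via the oblique projector $W_kV_k^H$ onto ${\rm Im}(QV_k)$) to match the Petrov--Galerkin realization with $(J_k-R_k)Q_k$, $B_k$, $C_kQ_k$, and then invokes part (i) of Theorem~\ref{thm:Gal_interpolate}; the structural claims for $J_k,R_k,Q_k$ follow by congruence exactly as you argue.
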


Based on Theorem~\ref{interpol2} we obtain Algorithm \ref{alg:dti} for the computation of $r(R; B,C) = r(J; B,C)$.
\begin{algorithm}[ht]
\label{algor1}
 \begin{algorithmic}[1]
\REQUIRE{Matrices $B \in \mathbb{C}^{n\times m}$, $C \in \mathbb{C}^{p\times n}$, $ J, R, Q \in \mathbb{C}^{n\times n}$.}
\ENSURE{The sequence of frequencies $\{ \omega_k \}$.}
\STATE Choose initial interpolation points $\omega_{1}, \dots , \omega_{q} \in {\mathbb R}.$
%\IF{$m = p$}
	\STATE $V_q \gets  {\rm orth} \begin{bmatrix} D(\ri\omega_1)^{-1}B  & D(\ri\omega_1)^{-2}B
									&	\dots		&	D(\ri\omega_q)^{-1}B  & D(\ri\omega_q)^{-2}B 	\end{bmatrix}$	\label{defn_init_subspaces0} \\
	\hskip 19ex $\quad \text{and}\quad \quad
	W_q \gets QV_q(V_q^HQV_q)^{-1} $. \label{defn_init_subspaces}
\FOR{$k = q,\,q+1,\,\dots$}
	\STATE  Form $G_{k}$ as in \eqref{eq:rph} for the choices of $J_k, R_k, Q_k, B_k, C_k$ in (\ref{eq:DH_subspaces}). \label{reduced_transfer}
	\STATE
	$\; \displaystyle \omega_{k+1} \gets \: \argmax_{\omega \in {\mathbb R}} \sigma_{\max}(G_{k} (\ri \omega))$. \label{solve_reduced}
	%\IF{$m = p$}
	\STATE $\widehat{V}_{k+1} \gets  \begin{bmatrix} D(\ri\omega_{k+1})^{-1}B  & D(\ri\omega_{k+1})^{-2}B. \end{bmatrix}$
	\label{defn_later_subspaces}
%	\ELSIF{$m < p$}
	\STATE $V_{k+1} \gets \operatorname{orth}\left(\begin{bmatrix} V_{k} & \widehat{V}_{k+1} \end{bmatrix}\right)
		\quad \text{and}\quad W_{k+1} \gets Q V_{k+1}(V_{k+1}^H Q V_{k+1})^{-1}$.
\ENDFOR
\end{algorithmic}
\caption{$\;$ DH structure preserving subspace method for the
computation of the stability radii $r(R; B,C)$ and $r(J; B,C)$ for large-scale systems.}
\label{alg:dti}
\end{algorithm}
According to Theorem~\ref{interpol2}, for a given $\widehat{s} \in {\mathbb C}$, setting
\[
	V_k :=  \begin{bmatrix} D(\widehat{s})^{-1}B  & D(\widehat{s})^{-2}B \end{bmatrix},\ % 	\quad	{\rm and}	\quad
	W_k := QV_k(V_k^HQV_k)^{-1}
\]
where
\begin{equation}\label{eq:defn_Ds}
	D(\widehat{s}): = (\widehat{s} I_n - (J - R)Q),
\end{equation}
we obtain $G(\widehat{s}) = G_k(\widehat{s})$ and $G'(\widehat{s}) = G'_k(\widehat{s})$ and thus  the Hermite interpolation conditions
\begin{equation}\label{eq:Hermite_inter_smax}
	\sigma_{\max}(G(\widehat{s})) = \sigma_{\max}(G_k(\widehat{s})),\	%\quad {\rm and} \quad
\sigma'_{\max}(G(\widehat{s})) = \sigma'_{\max}(G_k(\widehat{s}))
\end{equation}
are satisfied, which suggest the use of the reduced system in the greedy subspace framework outlined in Algorithm~\ref{alg:dti}.

In line 5 of every iteration, the subspace framework computes the ${\mathcal H}_\infty$-norm of a reduced system, in particular it computes the point ${\rm i} \omega_\ast$ on the imaginary axis where this ${\mathcal H}_\infty$-norm
is attained. Then the current left and right subspaces are expanded in a way so that the resulting reduced system
still has DH structure and its transfer function Hermite interpolates the original transfer function
at ${\rm i} \omega_\ast$. Since the Hermite interpolation conditions~(\ref{eq:Hermite_inter_smax}) are satisfied
at $\widehat{s} = {\rm i} \omega_1, \dots, {\rm i} \omega_{k}$ at the end of iteration $k$, the rate-of-convergence
analysis in \cite{Aliyev2017} applies to deduce a superlinear rate-of-convergence for the sequence $\{ \omega_k \}$.

The computationally most expensive part of Algorithm \ref{alg:dti} is  in lines \ref{defn_init_subspaces} and \ref{defn_later_subspaces}, where many linear systems with possibly many right hand sides have to be solved. If this is done with a direct solver, then for each value $\widehat{\omega} \in {\mathbb R}$  one $LU$ factorization
of the matrix $D(\ri \widehat{\omega})$ has to be performed.
For large values of $n$, the computation time is usually dominated by these $LU$ factorizations. In contrast to this,
the solution of the reduced problem in line \ref{solve_reduced} can be achieved (for small systems) by means of the
efficient algorithm in \cite{Boyd1990, Bruinsma1990}.

\subsubsection{Computation of $r(Q; B,C)$}
To compute the stability radius $r(Q; B,C)$ in the large-scale setting, we employ left tangential interpolations
(i.e., part (ii) of Theorem \ref{thm:Gal_interpolate}).
% so that the reduced system fully possesses the structure of the original system.

In this case $r(Q; B,C)$ is the reciprocal of the ${\mathcal H}_\infty$-norm of the transfer function
$G(s) := C (sI - (J-R)Q)^{-1} (J-R)B$ corresponding to the system
\begin{equation}\label{eq:PHS2}
	\dot x = (J-R) Qx + (J-R)B u, \quad
 	y(t) = Cx.
\end{equation}
%1234
To obtain a reduced system which has the same structure as (\ref{eq:PHS2}) and has a transfer function $G_k(s)$ that satisfies $\widehat{c}^{\: H} G(\widehat{s})  = \widehat{c}^{\: H}G_k(\widehat{s})$ for a given point $\widehat{s} \in {\mathbb C}$ and a direction $\widehat{c} \in {\mathbb C}^p$,
let us choose $W_k$ so as to satisfy the condition in (\ref{eq:subspace_inclusion_left})
for $\widetilde{A} := (J-R)Q$, and $\widetilde{C} := C$. Furthermore, we set
\[
	V_k := (J - R)^H W_k (W_k^H (J - R)^H W_k)^{-1},
\]
as well as ${\mathcal W}_k := {\rm Im}(W_k)$, ${\mathcal V}_k := {\rm Im}(V_k)$. The matrix $V_k$ is  chosen to satisfy
\[
	V_k^H W_k \;\; = \;\; I_k		\quad	{\rm and}	\quad		
						(V_k W_k^H)^2	\;\; = \;\; V_k W_k^H,
\]
so that $V_k W_k^H$ is an oblique projector onto ${\rm Im} ( (J-R)^H W_k )$.

In (\ref{eq:PHS2}), setting $\widetilde{A} := (J-R)Q$, $\widetilde{B} := (J-R)B$, $\widetilde{C} = C$,
let us investigate the matrices $\widetilde{A}_k, \widetilde{B}_k, \widetilde{C}_k$ of the corresponding reduced
system defined by (\ref{eq:red_sys1}), (\ref{eq:red_sys2}). Specifically, we have that
\[
	\widetilde{A}_k \;\; = \;\; W_k^H \widetilde{A} V_k	\; = \; W_k^H (J-R) Q V_k	\; = \;		W_k^H (J-R) W_k V_k^H Q V_k,
\]
where, in the third equality, we employ $V_k W_k^H (J - R)^H W_k = (J- R)^H W_k$,
% (recall that $V_r W_r^H$ is a projector onto ${\rm Im}((J - R)^H W_r)$),
or equivalently  $W_k^H (J- R) W_k V_k^H = W_k^H (J-R)$.
Hence, defining $J_k := W_k^H J W_k=-J_k^H$, $R_k := W_k^H R W_k=R_k^H\geq 0$, $Q_k := V_k^H Q V_k=Q_k^H>0$,
%which are again skew-Hermitian, Hermitian positive semi-definite, %Hermitian positive definite, respectively,
we obtain a DH system with $\widetilde{A}_k \;\; = \;\; (J_k - R_k) Q_k$.
We also have
\[
%	\begin{split}
		\widetilde{B}_k   \;\; = \;\; W_k^H \widetilde{B}  = W_k^H (J-R) B  = W_k^H (J - R) W_k V_k^H B   =  (J_k - R_k) B_k
%	\end{split}
\]
with $B_k := V_k^H B$, and $\widetilde{C}_k  \;\; = \;\; C V_k \;\; =: \;\; C_k$.
These constructions lead to the following analogue of Theorem \ref{interpol2}.
\begin{theorem}
\label{interpol3}
Consider a linear system of the form \eqref{eq:PHS2}
with the transfer function $G(s) := C(s I_n - (J - R)Q)^{-1}(J-R)B$.
For a given point $\widehat{s}\in\C$ and a direction $\widehat{c}\in\C^m$,
suppose that $W_k$ is a matrix with orthonormal columns such that
\begin{equation*}
\left( \widehat{c}^{\: H} C ( \widehat{s}I_n - (J - R)Q  )^{-1} (\widehat{s}I_n - (J - R)Q )^{-(\ell-1)} \right)^H \in{\rm Im}(W_k)
\end{equation*}
for $\ell= 1, \dots, N$.
Letting $\: V_k := (J - R)^H W_k (W_k^H (J - R)^H W_k)^{-1} \:$, and
\begin{equation}\label{eq:DH_subspaces2}
	\begin{split}
	J_k: = & W_k^H J W_k \;\; Q_k: = V_k^H Q V_k, \;\; R_k := W_k^H R W_k  \\
	B_k: =& V_k^H B, \;\; C_k: = CV_k,
	\end{split}
\end{equation}
the resulting reduced order system
\begin{equation}\label{eq:RPHS2}
		\dot x_k \;\; = \;\; (J_k-R_k) Q_k x_k + (J_k - R_k) B_k u,  \quad
 		y_k \;\; = \;\; C_k x_k
\end{equation}
is such that $\dot x_k = (J_k-R_k) Q_k x_k$ is dissipative Hamiltonian.
Furthermore, the transfer function
\begin{equation}
\label{eq:rph2}
G_k(s)  \;\; := \;\; C_k (s I_k - (J_k - R_k)Q_k)^{-1} (J_k - R_k) B_k
\end{equation}
of (\ref{eq:RPHS2}) satisfies
\begin{equation}
\widehat{c}^{\: H} G^{(\ell)}(\hat{s})\hat{b} \;\; = \;\; \widehat{c}^{\: H} G^{(\ell)}_k(\hat{s}) \quad \text{for} \quad  \ell = 0\ldots, N-1.
\end{equation}
\end{theorem}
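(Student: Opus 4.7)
The plan is to mirror the strategy used for Theorem \ref{interpol2}, replacing right with left tangential interpolation. There are two things to establish: (a) that the reduced system preserves the DH structure, i.e.\ $J_k$ is skew-Hermitian, $R_k \succeq 0$, $Q_k \succ 0$; and (b) that the left tangential Hermite interpolation conditions on $G$ follow from applying Theorem \ref{thm:Gal_interpolate}(ii) to $\widetilde{A} = (J-R)Q$, $\widetilde{B} = (J-R)B$, $\widetilde{C} = C$, together with verifying that the Petrov-Galerkin reduction of $\widetilde{A}, \widetilde{B}, \widetilde{C}$ against $W_k, V_k$ yields the matrices defining $G_k(s)$ in \eqref{eq:rph2}.

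For the DH part, the structure of $J_k$ and $R_k$ is immediate: since $W_k$ has orthonormal columns and $J^H = -J$, the congruence $J_k = W_k^H J W_k$ is skew-Hermitian, and since $R \succeq 0$, the congruence $R_k = W_k^H R W_k$ is positive semi-definite. For $Q_k = V_k^H Q V_k$ to be positive definite it suffices that $V_k$ has full column rank. This follows from the definition $V_k = (J-R)^H W_k (W_k^H (J-R)^H W_k)^{-1}$, which implicitly assumes the $k \times k$ matrix $W_k^H (J-R)^H W_k$ is invertible; a direct calculation then shows $V_k^H W_k = I_k$, so $V_k$ has full column rank and $Q_k \succ 0$ since $Q \succ 0$.

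For the interpolation part, the key algebraic identity is that $V_k W_k^H (J-R)^H W_k = (J-R)^H W_k$, which is immediate from the definition of $V_k$; equivalently, taking the Hermitian adjoint,
\begin{equation*}
W_k^H (J-R) W_k V_k^H = W_k^H (J-R).
\end{equation*}
Using this identity together with $V_k^H W_k = I_k$, I would compute $\widetilde{A}_k = W_k^H (J-R) Q V_k = W_k^H (J-R) W_k V_k^H Q V_k = (J_k - R_k) Q_k$ and $\widetilde{B}_k = W_k^H (J-R) B = W_k^H (J-R) W_k V_k^H B = (J_k - R_k) B_k$, while $\widetilde{C}_k = C V_k = C_k$. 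Thus the Petrov-Galerkin reduced transfer function coincides with the $G_k(s)$ in \eqref{eq:rph2}, and the DH factorization $\widetilde{A}_k = (J_k - R_k) Q_k$ emerges as a by-product. The hypothesis on $W_k$ places $\bigl( \widehat{c}^{\,H} C (\widehat{s} I - \widetilde{A})^{-\ell} \bigr)^H$ in $\mathrm{Im}(W_k)$ for $\ell = 1, \dots, N$, which is precisely the inclusion \eqref{eq:subspace_inclusion_left} required by Theorem \ref{thm:Gal_interpolate}(ii). Invoking that theorem yields $\widehat{c}^{\,H} G^{(\ell)}(\widehat{s}) = \widehat{c}^{\,H} G_k^{(\ell)}(\widehat{s})$ for $\ell = 0, \dots, N-1$, completing the proof.

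The main obstacle is the algebraic verification that the oblique projector $V_k W_k^H$ onto $\mathrm{Im}((J-R)^H W_k)$ interacts correctly with $\widetilde{A}$ and $\widetilde{B}$ so that the reduced matrices simultaneously (i) preserve the DH factorization and (ii) agree with the matrices produced by a standard Petrov-Galerkin reduction used by Theorem \ref{thm:Gal_interpolate}. Everything else is routine bookkeeping; the choice $V_k := (J-R)^H W_k (W_k^H (J-R)^H W_k)^{-1}$ is designed exactly so that this identity holds and the left tangential direction passes through both the DH structure and the interpolation mechanism.
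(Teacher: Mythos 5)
Your proposal is correct and follows essentially the same route as the paper: verify the DH structure of $J_k$, $R_k$, $Q_k$ by congruence (with $Q_k>0$ from $V_k^H W_k = I_k$), use the identity $W_k^H (J-R) W_k V_k^H = W_k^H (J-R)$ coming from the oblique projector $V_k W_k^H$ to show the Petrov--Galerkin reduction of $\widetilde{A}=(J-R)Q$, $\widetilde{B}=(J-R)B$, $\widetilde{C}=C$ equals $(J_k-R_k)Q_k$, $(J_k-R_k)B_k$, $C_k$, and then invoke Theorem~\ref{thm:Gal_interpolate}(ii). No gaps.
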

Theorem \ref{interpol3} shows that at a given $\widehat{s} \in {\mathbb C}$, the
Hermite interpolation properties $G(\widehat{s}) = G_k(\widehat{s})$ and $G'(\widehat{s}) = G_k'(\widehat{s})$ and, in particular,
$\sigma_{\max} (G(\widehat{s}))$ $=$ $\sigma_{\max}(G_k(\widehat{s}))$ and $\sigma_{\max}'(G(\widehat{s})) = \sigma_{\max}'(G_k(\widehat{s}))$)
can be achieved, while preserving the structure, with the choices
\begin{eqnarray*}
	W_k \; &=& \;
		\left[
			\begin{array}{cc}
				(C D(\widehat{s})^{-1})^H	&	(C D(\widehat{s})^{-2})^H
			\end{array}
		\right],\\
	V_k	\;	&=&	\;	(J - R)^H W_k (W_k^H (J - R)^H W_k)^{-1},
\end{eqnarray*}
where $D(\widehat{s})$ is as in (\ref{eq:defn_Ds}). This in turn gives rise to Algorithm \ref{alg:dti2}.
\begin{algorithm}[ht]
%\label{algor}
 \begin{algorithmic}[1]
\REQUIRE{Matrices $B \in \mathbb{C}^{n\times m}$, $C \in \mathbb{C}^{p\times n}$, $ J, R, Q \in \mathbb{C}^{n\times n}$.}
\ENSURE{The sequence $\{ \omega_k \}$.}
\STATE Choose initial interpolation points $\omega_{1}, \dots, \omega_{j} \in {\mathbb R}$.
%\IF{$m = p$}
	\STATE
		$W_j \gets
					{\rm orth}
					\left[ 						
						\left( C D(\ri\omega_1)^{-1} \right)^H
							\;\; \left( C D(\ri\omega_1)^{-2} \right)^H
									\;\;	\dots		\right.$ \\
		\hskip 35ex $	\left. \;\;	\left( C D(\ri\omega_j)^{-1} \right)^H
											\;\;  \left( C D(\ri\omega_j)^{-2} \right)^H
							\right]$, \\
%	\hskip 3ex
%	 and $\;\;$
		$\quad\quad\quad\quad V_j\gets (J - R)^H W_j (W_j^H (J - R)^H W_j)^{-1} $. \label{defn_init_subspaces2}
\FOR{$k = j,\, j+1,\,\dots$}
	\STATE  Form $G_{k}$ as in \eqref{eq:rph2} for the choices of $J_k$, $R_k$, $Q_k$, $B_k$, $C_k$ in (\ref{eq:DH_subspaces2}). \label{reduced_transfer2}
	\STATE
	$\; \displaystyle \omega_{k+1} \gets \argmax_{\omega \in {\mathbb R}} \sigma_{\max}(G_{k} (\ri \omega))$. \label{solve_reduced2}
	%\IF{$m = p$}
	\STATE $\widehat{W}_{k+1} \gets
	\begin{bmatrix} \left( C D(\ri\omega_{k+1})^{-1} \right)^H & \left( C D(\ri\omega_{k+1})^{-2} \right)^H \end{bmatrix}$. \label{defn_later_subspaces2}
%	\ELSIF{$m < p$}
	\STATE $W_{k+1} \gets \operatorname{orth}\left(\begin{bmatrix} W_{k} & \widehat{W}_{k+1} \end{bmatrix}\right)
			\quad \text{and}	$	\\	
				\hskip 15ex 	$V_{
k+1} \gets (J - R)^H W_{k+1} (W_{k+1}^H (J - R)^H W_{k+1})^{-1}$.
\ENDFOR
\end{algorithmic}
\caption{$\;$ DH structure preserving subspace method for the
computation of the stability radius $r(Q; B,C)$ of a large scale DH system.}
\label{alg:dti2}
\end{algorithm}

At every iteration of this algorithm, the ${\mathcal H}_\infty$-norm is computed for a reduced problem of
the form $(\ref{eq:RPHS2})$, in particular, the optimal frequency where this ${\mathcal H}_\infty$-norm
is attained is retrieved. Then the subspaces are updated so that the Hermite interpolation properties hold
also at this optimal frequency at the largest singular values of the full and reduced problem, respectively.
Once again the sequence $\{ \omega_k \}$ by Algorithm \ref{alg:dti2} is guaranteed to converge at a super-linear rate, which can be attributed to the Hermite interpolation properties holding between  the largest singular values of the full and
reduced transfer functions.

\subsection{Numerical Experiments}\label{sec:DH_numexps}

In this subsection we illustrate the performance of MATLAB implementations of
Algorithms \ref{alg:dti} and \ref{alg:dti2} via some numerical examples. We first discuss some implementation details and then present numerical results on two sets of random synthetic
examples in Section \ref{sec:numexp_syn}, and data from a FE
model  of a brake disk in Section~\ref{sec:numexp_brake}.

\subsubsection{Implementation Details and Test Setup}\label{sec:test_setup}

Algorithms \ref{alg:dti} and \ref{alg:dti2} are terminated
% in practice
when at least one of the following three conditions
is fulfilled:
\begin{enumerate}
	\item The relative distance between $\omega_{k}$ and $\omega_{k-1}$ is less than a prescribed tolerance
	for some $k> j$, i.e.,
	\[
		\left| \omega_k- \omega_{k-1} \right| < \varepsilon \cdot \frac{1}{2} (\omega_k + \omega_{k-1}).
	\]
	\item Letting $f_k := \max_{\omega \in {\mathbb R} \cup \infty} \sigma_{\max}(G_k({\rm i} \omega))$, two
	consecutive iterates $f_k, f_{k-1}$ are close enough in a relative sense, i.e.,
	\[
		\left| f_k - f_{k-1} \right| < \varepsilon \cdot \frac{1}{2} (f_k + f_{k-1}).
	\]
	\item The number of iterations exceeds a specified integer, i.e.,
	$k> k_{\max}$.
\end{enumerate}
In all numerical examples that we present,  we set $\varepsilon = 10^{-6}$ and $k_{\max} = 100$.

In general, Algorithms \ref{alg:dti} and \ref{alg:dti2} converge only locally. The choice of the initial interpolation points affects
the maximizers that the subspace frameworks converge to, in particular, whether convergence to a global
maximizer occurs. The initial interpolation points are chosen based on the following procedure.

First, we discretize the interval $[ \lambda_{\min}^{\Im} , \lambda_{\max}^{\Im} ]$
into $\rho$ equally spaced points, say $\omega_{0,1}, \dots, \omega_{0,\rho}$, including the end-points
$\lambda_{\min}^{\Im}, \lambda_{\max}^{\Im}$, where $\rho$ is specified by the user and
$\lambda_{\min}^{\Im}, \lambda_{\max}^{\Im}$ denote the imaginary parts of the eigenvalues of $(J-R)Q$ with
the smallest and largest imaginary part, respectively. Then we approximate the eigenvalues $z_1, \dots, z_\rho$ of $(J-R)Q$
closest to ${\rm i} \omega_{0,1}, \dots, {\rm i} \omega_{0,\rho}$, and permute them into
$z_{j_1}, \dots, z_{j_\rho}$ where $\{ j_1, \dots, j_\rho \} = \{ 1, \dots, \rho \}$ so that
$\sigma_{\max}(G({\rm i} \Im z_{j_1} )) \geq \dots \geq \sigma_{\max}(G({\rm i} \Im z_{j_\rho} ))$.
The interpolation points $\omega_1, \dots, \omega_\ell$ employed initially are then chosen as the imaginary parts
of $z_{j_1}, \dots, z_{j_\ell}$, where again $\ell\leq \rho$ is specified by the user.

\subsubsection{Results on Synthetic Examples}\label{sec:numexp_syn}
We now present results for two families of linear DH systems with random coefficient matrices; the first family consists of dense systems of order $800$,
whereas the second family consists of sparse systems of order $5000$.

\medskip

\noindent
\textbf{Dense Random examples.}
In the dense family the coefficient matrices $J$, $Q$, $R$  are formed by the MATLAB commands
\begin{verbatim}
	>> J = randn(800);	J = (J - J')/2;
	>> Q = randn(800);	Q = (Q + Q')/2;	mineig = min(eig(Q));
	>> if (mineig < 10^-4) Q = Q + (-mineig + 5*rand)*eye(n); end
	>> p = round(80*rand);
	>> Rp = randn(p); Rp = (Rp + Rp')/2; mineig = min(eig(Rp));
	>> if (mineig < 10^-4) Rp = Rp + (-mineig + 5*rand)*eye(p); end
	>> R = [Rp zeros(p,800-p); zeros(800-p,p) zeros(800-p,800-p)];
	>> X = randn(800); [U,~] = qr(X); R = U'*R*U;
\end{verbatim}
The restriction matrices $B$ and $C$ are chosen as $800\times 2$ and $2\times 800$
random matrices created by the MATLAB command \texttt{randn}. To compute $r(R; B,C) = r(J; B,C)$, as well as $r(Q;B,C)$, we ran
\begin{enumerate}
	\item[(1)] the Boyd-Balakrishnan (BB) algorithm \cite{Boyd1990},
	\item[(2)] the subspace framework that does not preserve the DH structure
	\cite[Algorithm 1]{Aliyev2017} described in Subsection \ref{sec:sf_descriptor}, and
	\item[(3)] the subspace frameworks that preserve structure, i.e., Algorithms \ref{alg:dti} and \ref{alg:dti2},
	introduced in Subection \ref{sec:sf_PH}
\end{enumerate}
on $100$ such random examples.
The spectrum of a typical $(J-R)Q$ of size $800$ generated in this way is depicted
in Figure \ref{fig:spectra_randomPH} on the left.
	\begin{figure}
	\hskip -2.5ex
		\begin{tabular}{cc}
			\includegraphics[width=6.cm]{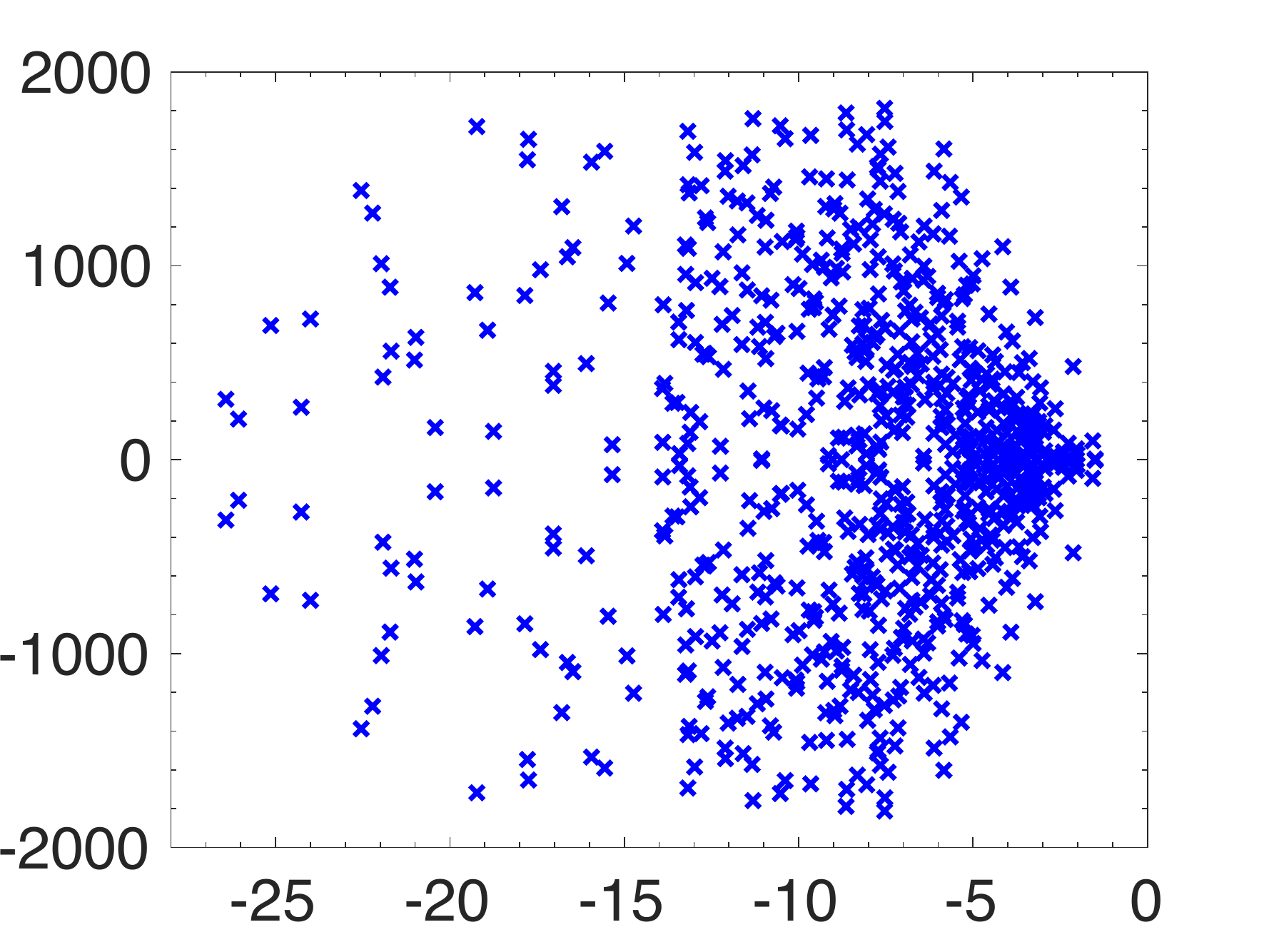} 	&	\includegraphics[width=6.cm]{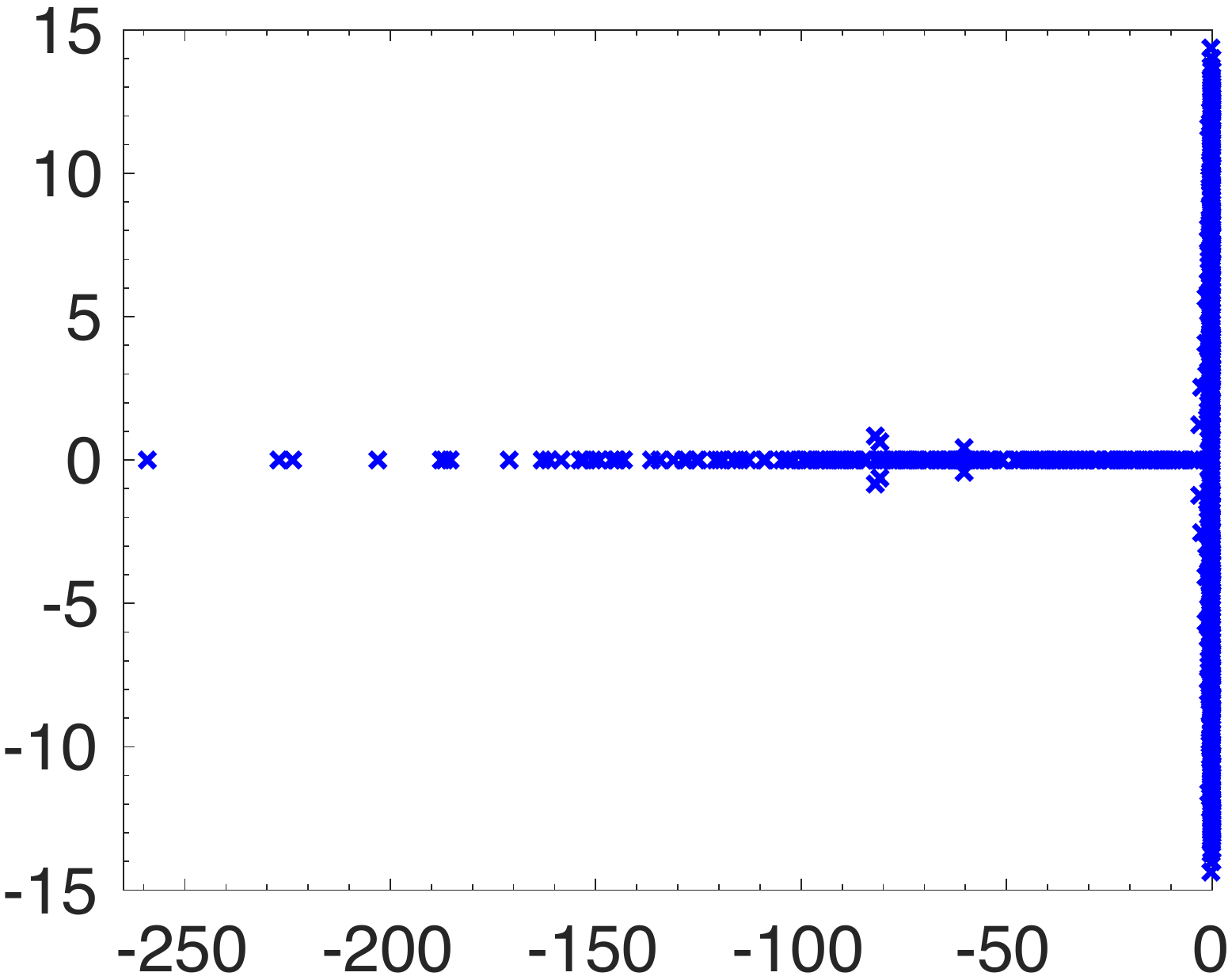}	
		\end{tabular}
		\caption{The spectra of $A = (J-R)Q$ for a dense random $J, R, Q \in {\mathbb R}^{800\times 800}$ (left),
		and a sparse random $J, R, Q \in {\mathbb R}^{5000\times 5000}$ (right). The MATLAB commands
		yielding these $J, R, Q$ are specified in Section \ref{sec:numexp_syn}.}
%		\centering
		\label{fig:spectra_randomPH}
	\end{figure}

The progress of Algorithm \ref{alg:dti2}, as well as Algorithm 1 in \cite{Aliyev2017},
to compute $r(Q; B,C)$ for this example is presented
%for this particular example.
in  Figure~\ref{fig:progress_alg2}, which includes on the top left a plot of
$f(\omega) :=  \sigma_{\max}(C ({\rm i} \omega I - (J-R) Q)^{-1} (J-R)B)$
for $\omega \in [-2000,0]$ along with the converged maximizers by the respective Algorithms. %Two algorithms converge to
%different maximizers;
Algorithm \ref{alg:dti2} converges to the global maximizer $\omega_{\ast,1} = -731.9774$
with $f(\omega_{\ast,1}) = 32.321399$, while Algorithm 1 in \cite{Aliyev2017} converges to the local maximizer
$\omega_{\ast,2} = -1602.1187$ with $f(\omega_{\ast,2}) = 29.028197$. The globally optimal peak
$(\omega_{\ast,1}, f(\omega_{\ast,1}))$ and the locally optimal peak $(\omega_{\ast,2}, f(\omega_{\ast,2}))$
are marked in the plot with a square and a circle, respectively.

The remaining five plots in Figure \ref{fig:progress_alg2}
illustrate the progress of Algorithm \ref{alg:dti2}. In each one of these plots, the black curve is a plot of the reduced function
$f_k(\omega) := \max_{\omega \in {\mathbb R} \cup \infty} \sigma_{\max}(C_k ({\rm i} \omega I - (J_k-R_k) Q_k)^{-1} (J_k-R_k)B_k)$
with respect to $\omega$, and the circle marks the global maximizer of this reduced function.
The top right shows the initial reduced function in black interpolating the full function at ten points, and the other four
show the reduced function after iterations $1$-$4$ from middle-left to bottom-right. Observe that, at every iteration, the refined
reduced function interpolates the full function at the maximizer of the previous reduced function in addition
to the earlier interpolation points. We also list the iterates of Algorithm \ref{alg:dti2} in Table \ref{table:dense_iterates_alg2}
indicating a quick converge. The algorithm terminates after performing six subspace iterations.

The results of Algorithms \ref{alg:dti} and \ref{alg:dti2}
%introduced in Section \ref{sec:sf_PH} on
for the first $10$ random examples are presented in Tables~\ref{table:dense_comparison} and~\ref{table:dense_comparisonQ}, respectively. Results from \cite[Algorithm 1]{Aliyev2017}
and the BB Algorithm \cite{Boyd1990} are also included in these tables for
comparison purposes. For the computation of $r(J; B,C) = r(R; B,C)$,
the new structure-preserving Algorithm \ref{alg:dti} and \cite[Algorithm 1]{Aliyev2017} perform equally well on these first $10$ examples. They both return the globally optimal solutions in $9$ out of
$10$ examples, perform similar number of subspace iterations and require similar amount of cpu-time.
	\begin{figure}
		\begin{tabular}{cc}
			\includegraphics[width=5.6cm]{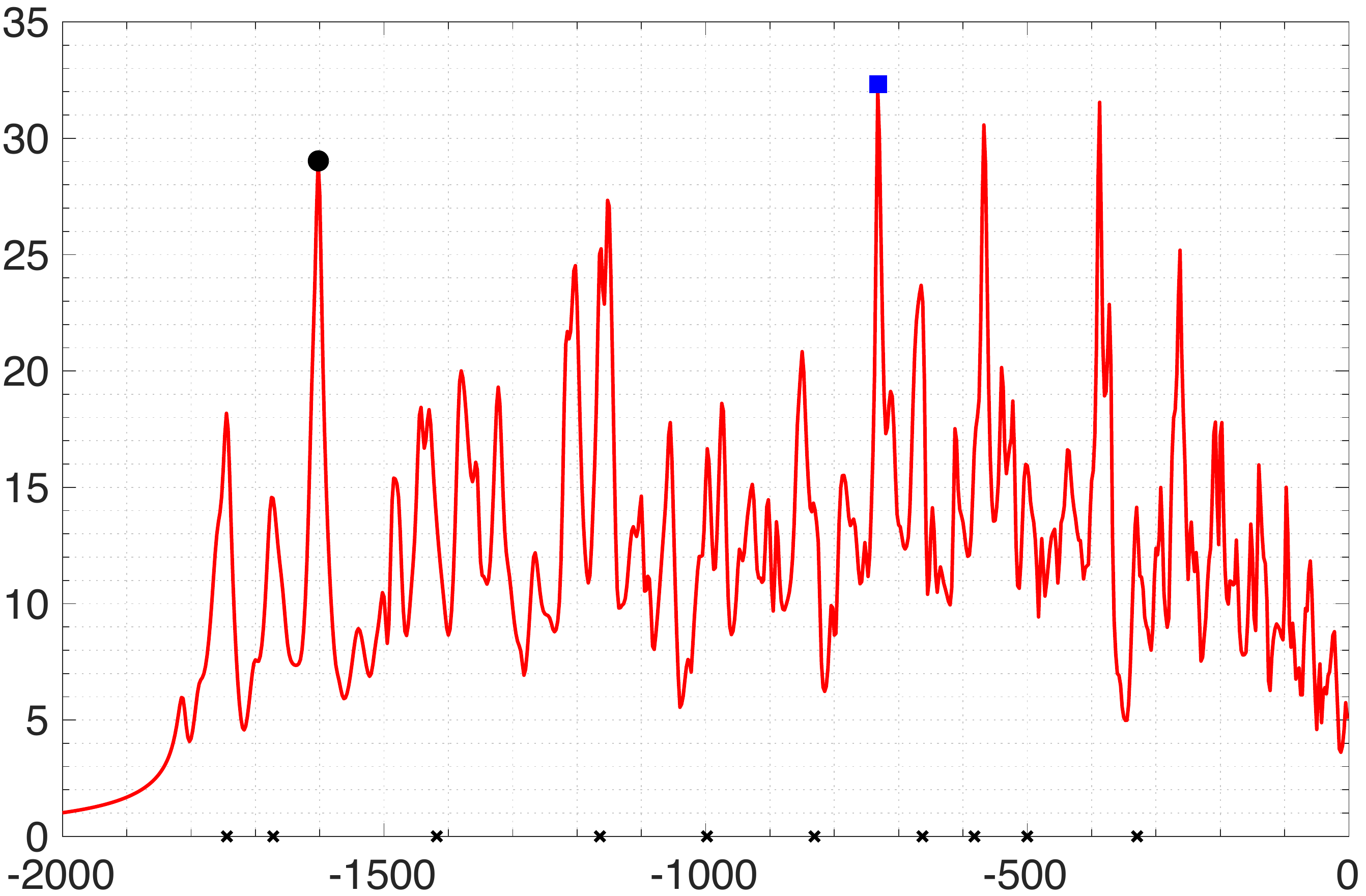} 	&	\includegraphics[width=5.6cm]{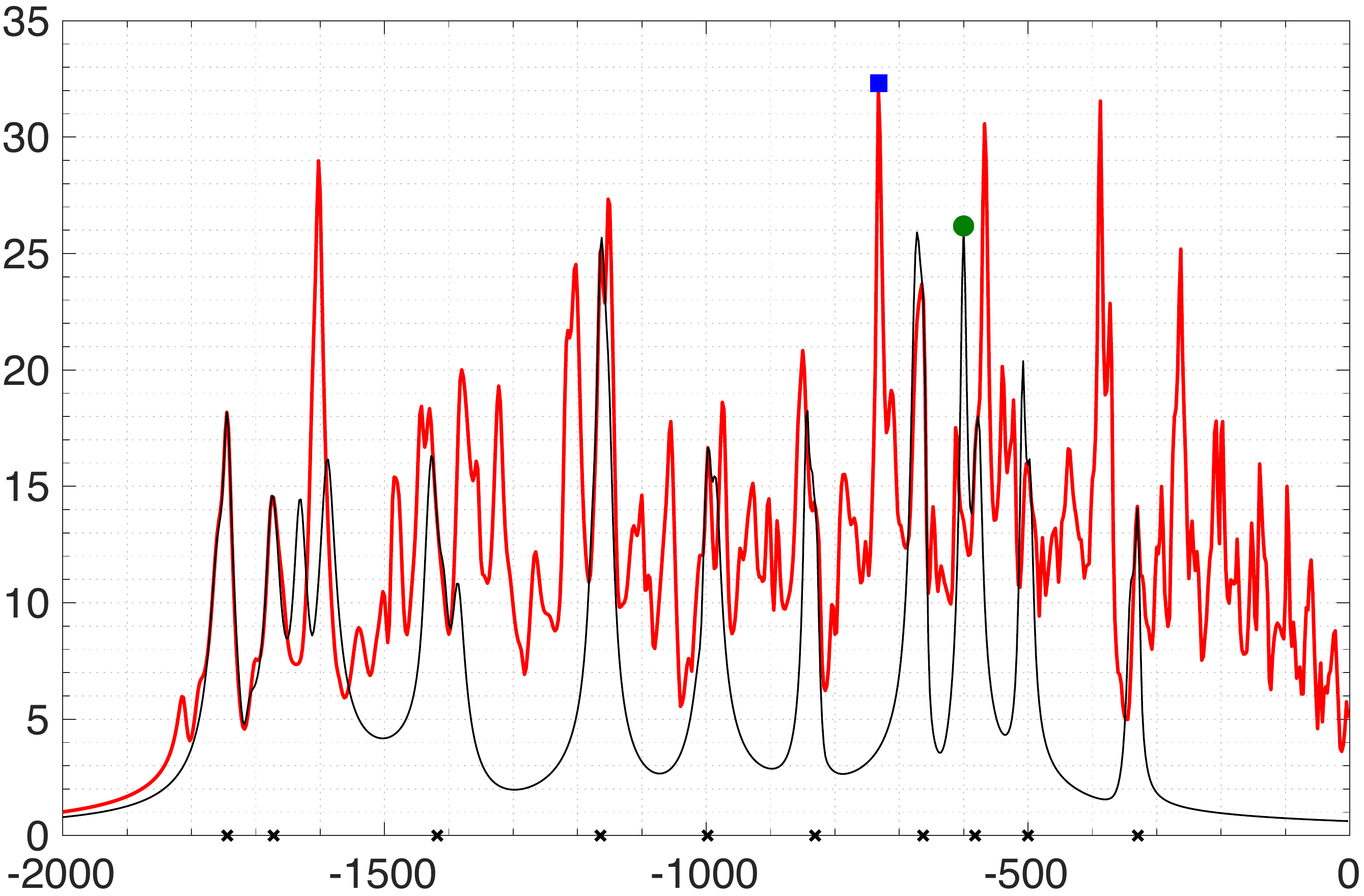}		\\
			\includegraphics[width=5.6cm]{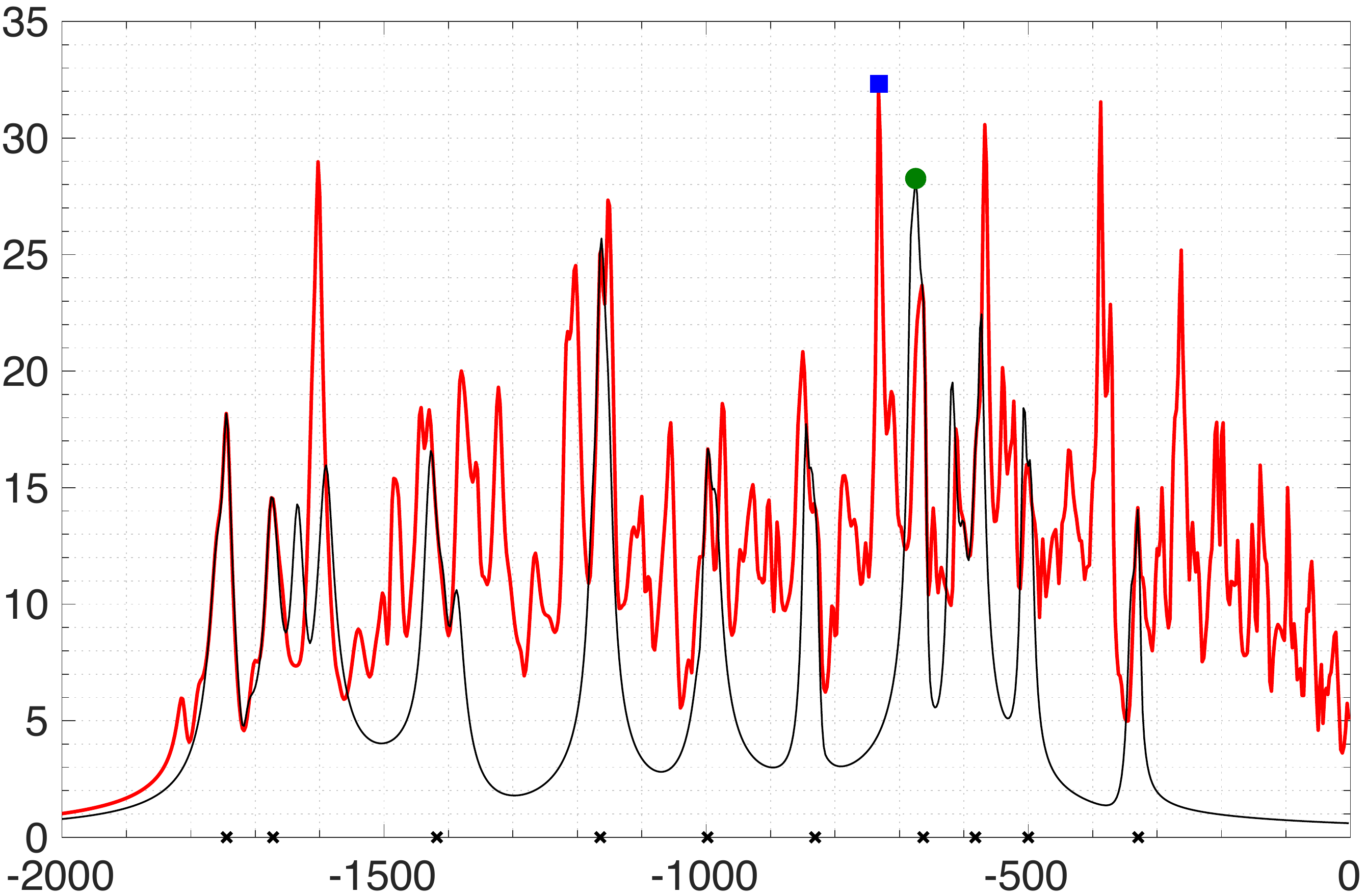} 	&	\includegraphics[width=5.6cm]{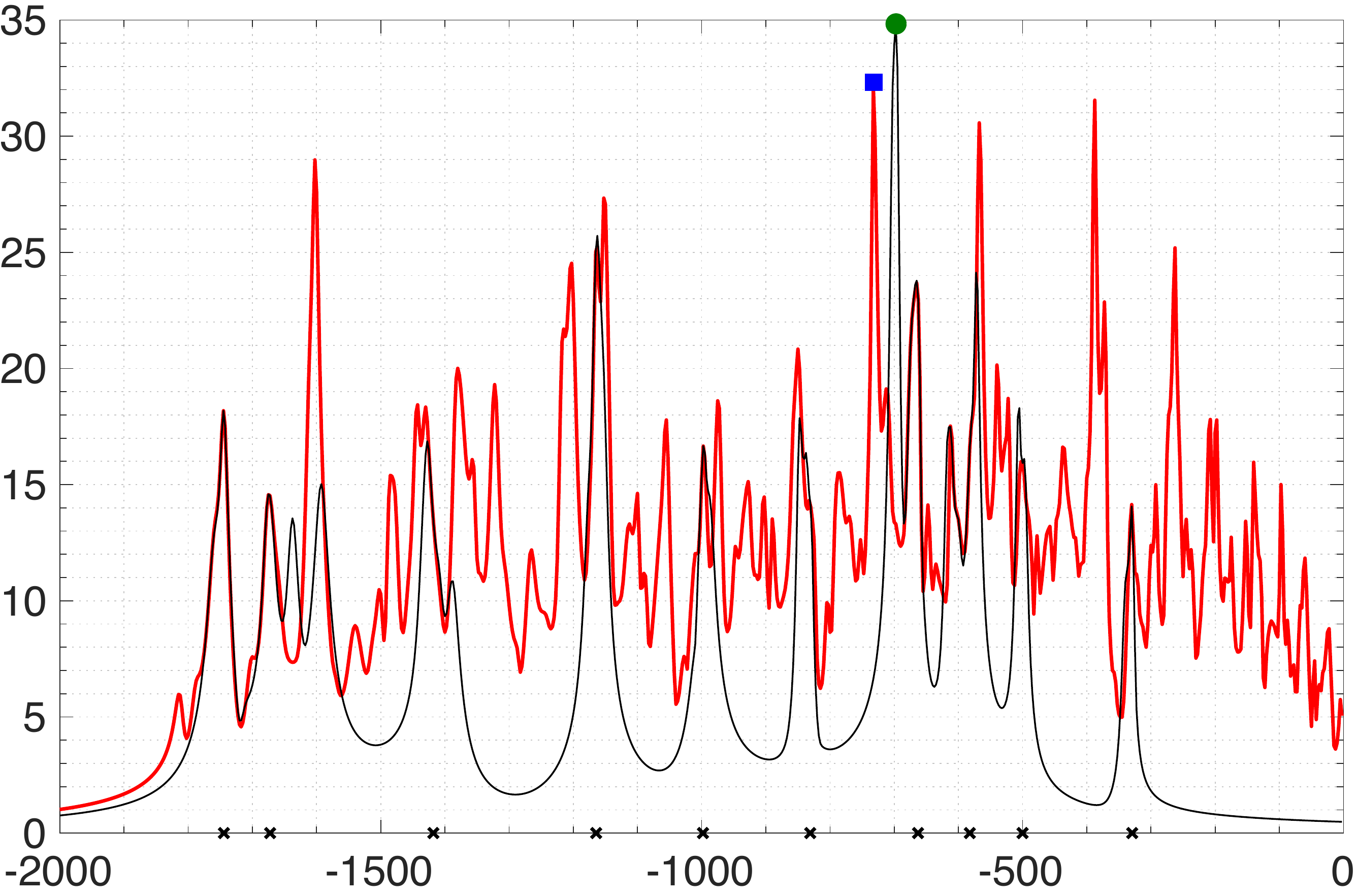}		\\
			\includegraphics[width=5.6cm]{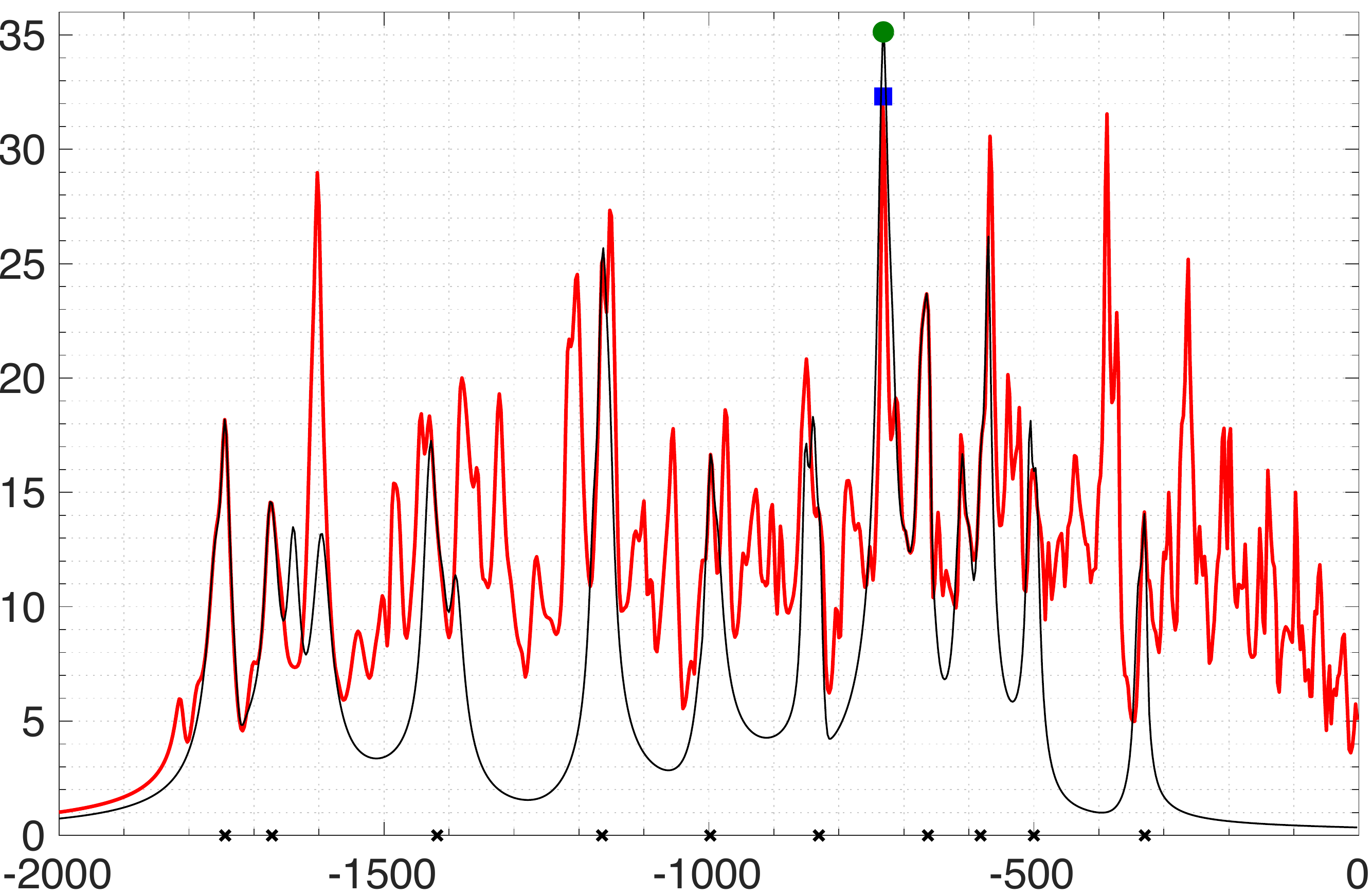} 	&	\includegraphics[width=5.6cm]{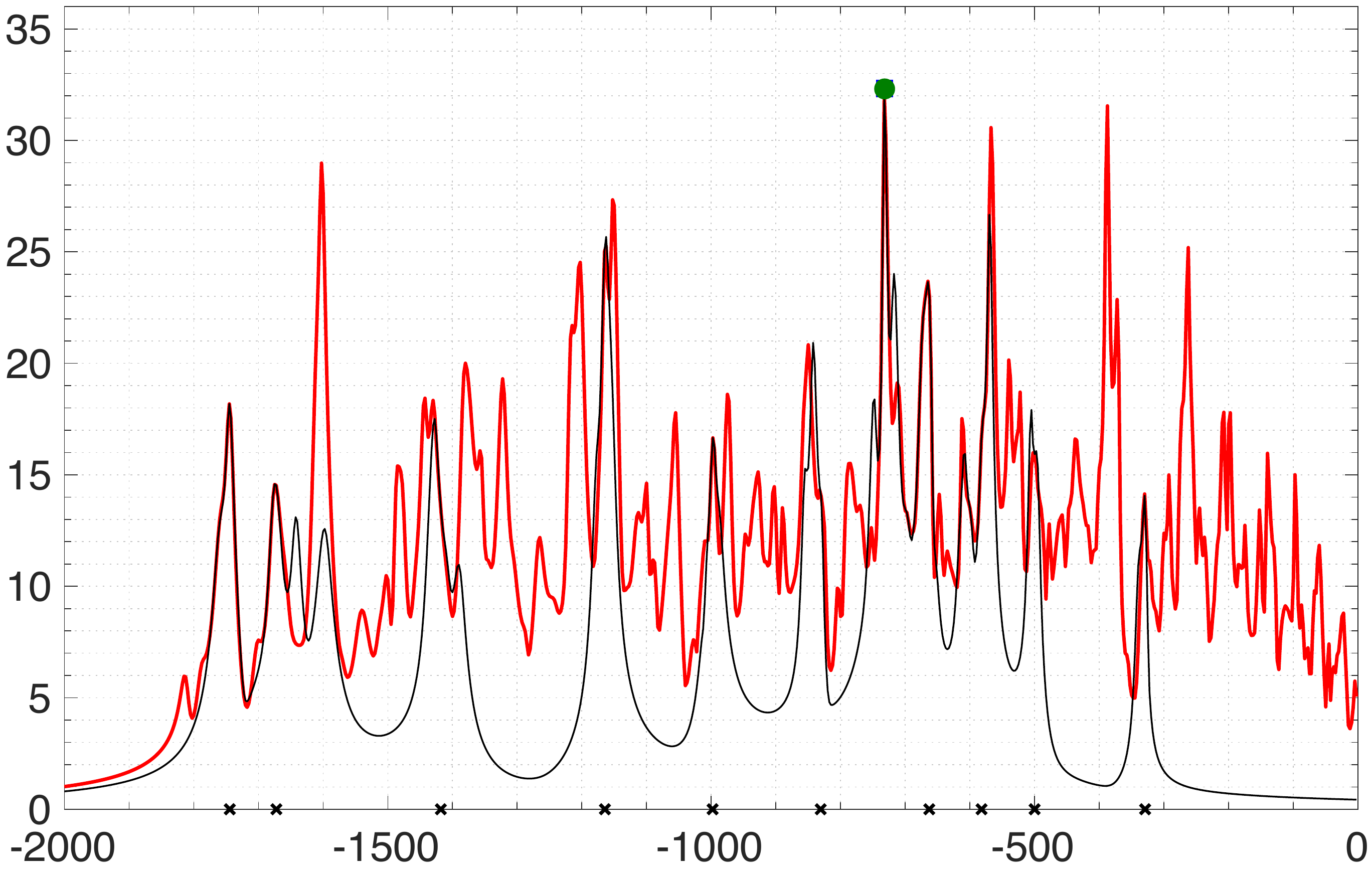}		\\
		\end{tabular}
		\caption{\textbf{(Top Left)} The plot of $\sigma_{\max}(C({\rm i} \omega - (J-R)Q)^{-1} (J-R)B)$ as a function of $\omega \in [-2000,0]$
		along with the maxima computed by Algorithm \ref{alg:dti2} and \cite[Algorithm 1]{Aliyev2017} marked with the square and
		circle, respectively, for a dense random example of order $800$. \textbf{(Top Right)} The black curve is the initial reduced function
		for Algorithm \ref{alg:dti2} interpolating the full function at $10$ points, whereas the circle is the global maximum of this
		reduced function. \textbf{(Middle Left - Bottom Right)} Plots of the reduced functions after iterations $1$-$4$ of Algorithm \ref{alg:dti2}
		displayed with black curves along with the maximizers of the reduced functions marked with circles.}
%		\centering
		\label{fig:progress_alg2}
	\end{figure}

A more decisive conclusion can be drawn when we consider all of the $100$ random examples.
The left-hand columns in Figure \ref{fig:dense_functions} depict the ratios
$(f_{\rm BB} - f_{\rm SF}) / ((f_{\rm BB} + f_{\rm SF})/2)$, where $f_{BB}$ are the globally
maximal values of $\sigma_{\max}(CQ ({\rm i} \omega I - (J-R)Q)^{-1} B)$ over $\omega$
returned by the BB algorithm and $f_{\rm SF}$ are the values returned by the
subspace frameworks, specifically by Algorithm \ref{alg:dti} on the top and by \cite[Algorithm 1]{Aliyev2017}
at the bottom. The results by Algorithm \ref{alg:dti} match with the ones by the BB algorithm
$81$ times out of $100$, while the results by Algorithm \cite[Algorithm 1]{Aliyev2017}
match with the ones by the BB algorithm $67$ times out of $100$.
(In the examples where the results by the subspace frameworks differ from those by
the BB algorithm, the subspace frameworks converge to
local maximizers that are not global maximizers.)

On these $100$ random examples Algorithm \ref{alg:dti} performs slightly fewer
iterations, on average $17.6$, whereas \cite[Algorithm 1]{Aliyev2017} on average performs $20.3$ iterations.
On the other hand, the total run-time on average is better for \cite[Algorithm 1]{Aliyev2017} compared
with Algorithm \ref{alg:dti} here,  $21.3\, s$  vs $30.9\, s$. We observe this behavior on
various other DH systems; Algorithm~\ref{alg:dti} seems to be more robust for the
computation of $r(J; B,C) = r(R; B,C)$ in converging to the globally maximal value
of $\sigma_{\max}(CQ ({\rm i}  \omega I - (J-R)Q)^{-1} B)$ compared with \cite[Algorithm 1]{Aliyev2017}, however, this is at the expense of slightly more computation time.

On the other hand, for the computation of $r(Q; B, C)$, Table \ref{table:dense_comparisonQ} indicates
that Algorithm \ref{alg:dti2} returns exactly the same globally maximal values (up to tolerances)
as the BB algorithm for all of the first $10$ examples except one, whereas
application of \cite[Algorithm 1]{Aliyev2017} results in locally maximal solutions
that are not globally maximal $4$ times. Fewer number of subspace iterations
in favor of Algorithm \ref{alg:dti2} are also apparent from the table. Once again,
the plots of the ratios $(f_{\rm BB} - f_{\rm SF}) / ((f_{\rm BB} + f_{\rm SF})/2)$ are
shown in Figure \ref{fig:dense_functions} on the right-hand column for all $100$ examples
with $f_{\rm SF}$ now representing the values returned by Algorithm \ref{alg:dti2} on the top
and by \cite[Algorithm 1]{Aliyev2017} at the bottom. Algorithm \ref{alg:dti2} and \cite[Algorithm 1]{Aliyev2017}
return locally optimal solutions that are not globally optimal $21$ and $27$ times,
respectively. In this case the difference between the number of subspace iterations
for these $100$ examples is more pronounced in favor of Algorithm \ref{alg:dti2};
indeed the number of subspace iterations is on average $7.2$  for Algorithm \ref{alg:dti2} and and $17.0$ for
\cite[Algorithm 1]{Aliyev2017}. This difference in the number of iterations is also reflected
in the average run-times which are $13.2\, s$ and $19\, s$ for Algorithm \ref{alg:dti2}
and \cite[Algorithm 1]{Aliyev2017}, respectively.

\begin{table}
\begin{center}
\begin{tabular}{|c||c|c|}

\hline
  $k$  & $\omega_{k+1}$ &  $\sigma_{\max}(G_k(\omega_{k+1}))$ \\ [0.5ex]
\hline % inserts single-line
\hline
10  & -600.705819      &    26.182525 \\
11  & -674.769938      &     28.262865 \\
12  & -697.139310      &    34.834307 \\
13  &  -731.573363      &    35.133647 \\
14  &  -731.942586	   &	 32.309246 \\
15  &  -731.977386	  &	32.321399 \\
16  &  -731.977385 	&	32.321399 \\

\hline

\end{tabular}
\end{center}
\caption{ Iterates of Algorithm~\ref{alg:dti2} to compute $r(Q; B, C)$
on a DH system with dense random $J, R, Q \in {\mathbb R}^{800\times 800}$ and
random restriction matrices $B \in {\mathbb R}^{800\times 2}, C \in {\mathbb R}^{2\times 800}$.
The algorithm is initiated with $10$ interpolation points and terminates after $6$
iterations with $32$ dimensional subspaces. }
\label{table:dense_iterates_alg2}
\end{table}

\begin{table}
\begin{center}
\begin{tabular}{|c||ccc|cc|cc|}
\hline
  &  \multicolumn{3}{c}{$\max_{\omega} \sigma_{\max}(CQ ({\rm i} \omega I - (J-R)Q)^{-1} B)$}  &
  			\multicolumn{2}{|c|}{$\#$ iterations}	&
			\multicolumn{2}{|c|}{run-time} \\
 [0.5ex]
   Ex. &  Alg.~\ref{alg:dti} & \cite[Alg. 1]{Aliyev2017}  & BB Alg. \cite{Boyd1990}
  			&  Alg.~\ref{alg:dti} & \cite{Aliyev2017}
					&	Alg.~\ref{alg:dti} & \cite{Aliyev2017}	\\ [0.5ex]  \hline	\hline
  % & Alg.~\ref{alg:dti} & \texttt{eigopt}
 % inserts single-line

1     &   32.559659     &      32.559659   &     32.559659     & 	9	&	9	&	14	&	13	\\

2       & 46.703932     &      46.703932     &   46.703932    &	15	&	12	&	16	&	12.4	\\

3      &  26.227029     &      \textbf{24.023572}  &    26.227029 	&	7	&	12	&	12.9	  &	15.1	\\

4   	&  \textbf{62.748090}      &     108.030409       &  108.030409 &	 17	&	41	&	17.8	 &	27.2	\\

5   	&  35.974956      &	 35.974957      &   35.974956 	&	9	&	14	&	13.4		&	13.9	\\

6   	&  53.522033       &   53.522033     &   53.522033 	&	6	&	3	&	11.4		&	10.2	\\

7    	&  31.739000      &     31.739000   &    31.739000 	&	4	&	12	&	11.6		&	13.8	\\

8     	&  76.958658      &   76.958658     &   76.958658 	&	35	&	8	&	43.2		&	11	\\

9     	&  37.007241 	&   37.007241    &   37.007241  &	6	&	2	&	13	&	11.5		\\

10    &  155.642871      &   155.642871     &   155.642871  &	2	&	2	&	8.6	&	8.5	\\

\hline

\end{tabular}
\end{center}
\caption{Run-time (in $s$) comparison of Algorithm \ref{alg:dti} and
		\cite[Algorithm 1]{Aliyev2017}
		to compute $r(R; B, C) = r(J; B, C)$ for $10$ dense random examples
		of order $800$. The third column  refers to the number of
		subspace iterations.}
\label{table:dense_comparison}
\end{table}

\begin{table}
\hskip -1.3ex
\begin{tabular}{|c||ccc|cc|cc|}
\hline
  &  \multicolumn{3}{c}{$\max \: \sigma_{\max}(C({\rm i} \omega I - (J-R)Q)^{-1} (J-R)B)$}  &
  			\multicolumn{2}{|c|}{$\#$ iterations}	&
			\multicolumn{2}{|c|}{run-time} \\
 [0.5ex]
  $\#$ &  Alg.~\ref{alg:dti} & \cite[Alg. 1]{Aliyev2017}  & BB Alg. \cite{Boyd1990}
  			&  Alg.~\ref{alg:dti} & \cite{Aliyev2017}
					&	Alg.~\ref{alg:dti} & \cite{Aliyev2017}	\\ [0.5ex]  \hline	\hline
					
1	&	9.809182			&	9.809182		&	9.809182		&	3	&	26	&	11.1	&	19.5	\\

2	&	22.386670		&	22.386670	&	22.386670	&	5	&	26	&	10.2	&	18.1	\\

3	&	8.364927			&	8.364927		&	8.364927		&	3	&	8	&	11	&	13.3	\\

4	&	32.321399		&	\textbf{29.028197}	&	32.321399	&	6	&	37	&	10.5	&	25.2	\\

5	&	15.071678		&	15.071678	&	15.071678	&	7	&	15	&	12.7	&	14	\\

6	&	21.641484		&	21.641484	&	21.641484	&	4	&	8	&	10.2	&	11.6	\\

7	&	12.858494		&	\textbf{12.763161}	&	12.858494	&	6	&	4	&	11.8	&	11.2	\\

8	&	31.901305		&	\textbf{27.996873}	&	31.901305	&	8	&	12	&	11.5	&	12.5	\\

9	&	9.228945			&	9.228945		&	9.228945		&	3	&	8	&	11.3	&	13	\\

10	&	\textbf{47.697528}	&	\textbf{47.697528}	&	71.534252	&	10	&	8	&	11.8	&	10.2	\\
					
\hline

\end{tabular}
\caption{Run-time (in $s$) comparison of Algorithm \ref{alg:dti2} and
		\cite[Algorithm 1]{Aliyev2017} for the computation of $r(Q; B, C)$ on  $10$ dense random examples
		of order $800$.}
\label{table:dense_comparisonQ}
\end{table}

\begin{figure}[ht]
		\hskip -3ex
		\begin{tabular}{cc}
			\includegraphics[width=6cm]{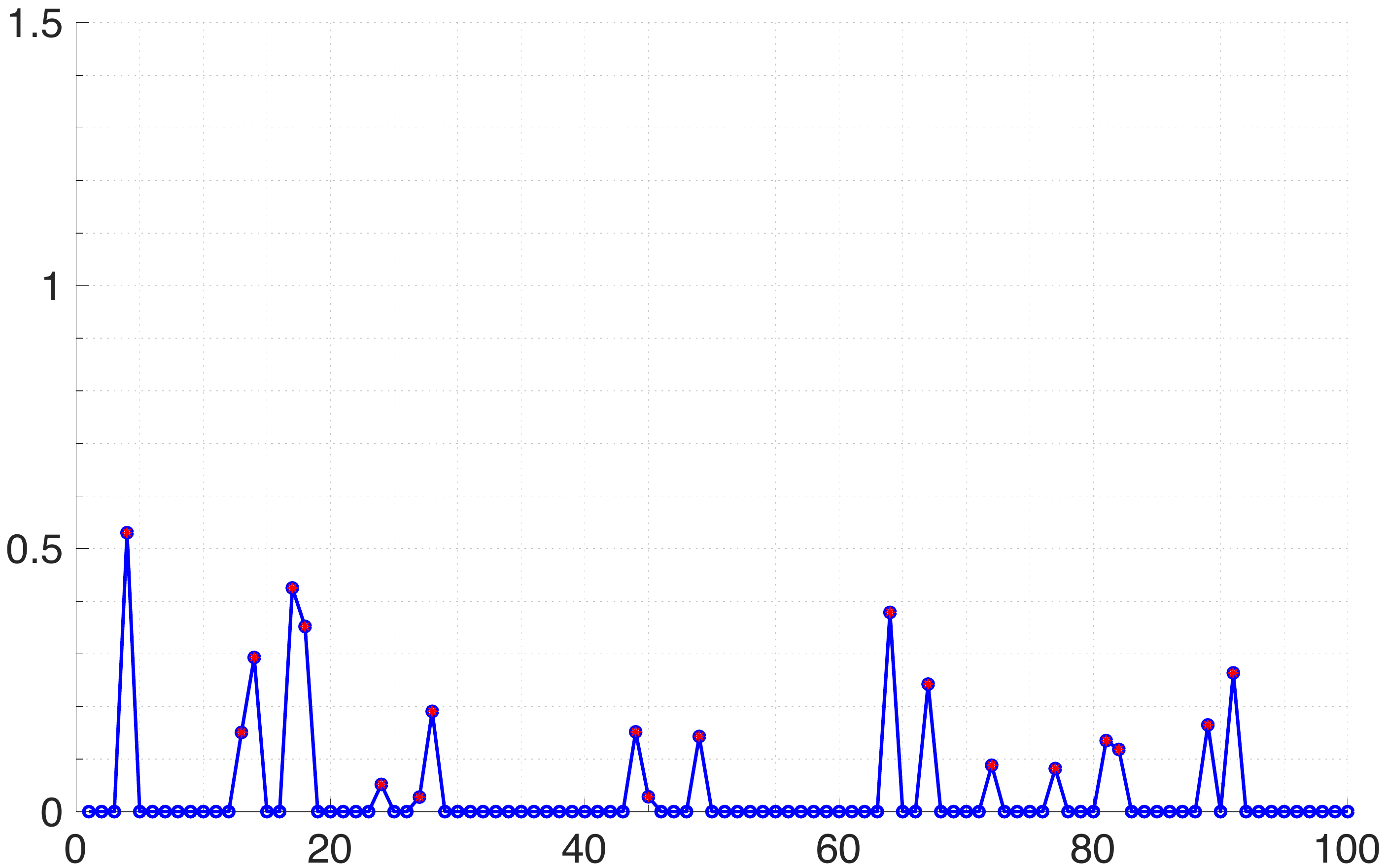} 	&	\includegraphics[width=6cm]{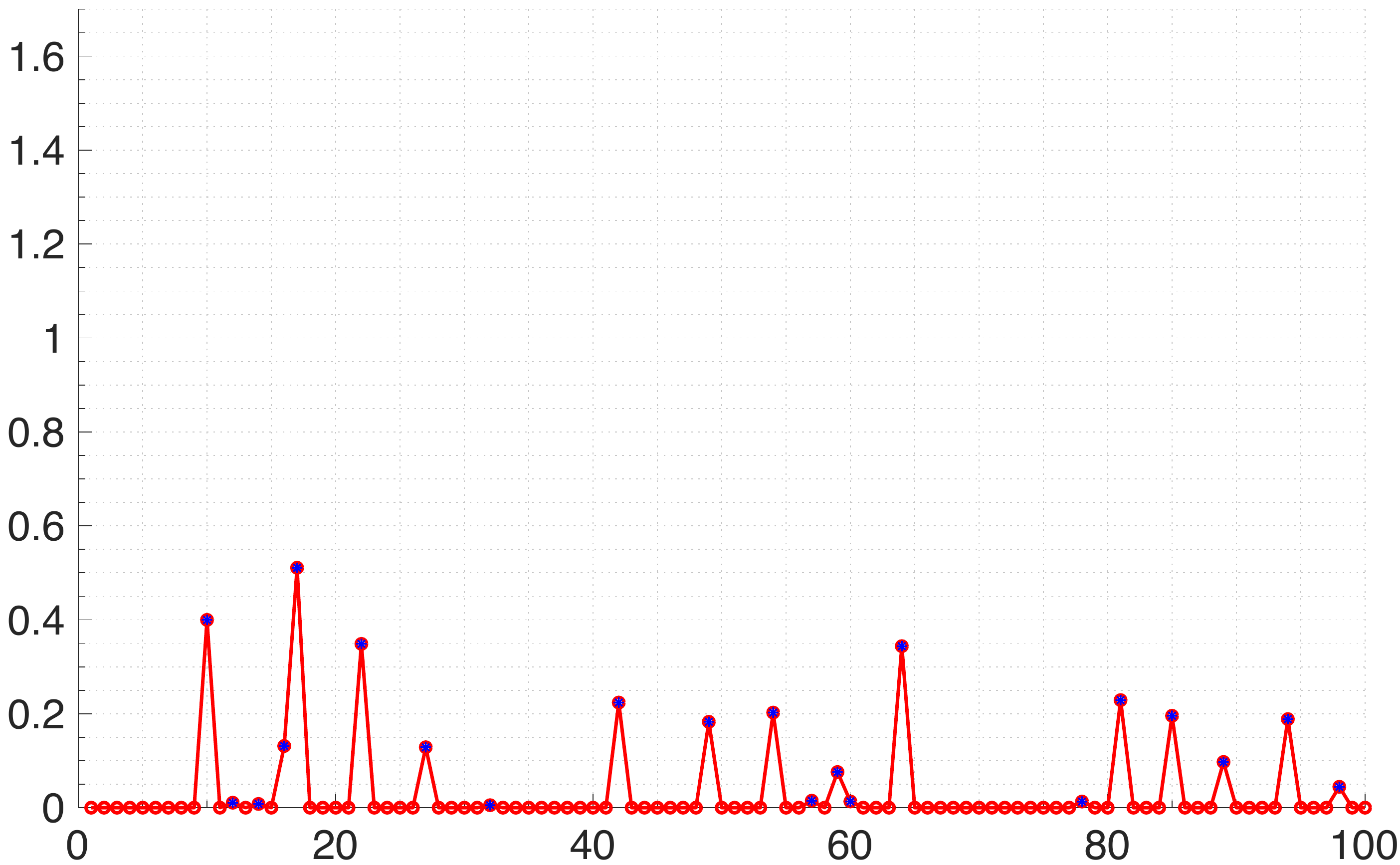} 	\\
			\includegraphics[width=6cm]{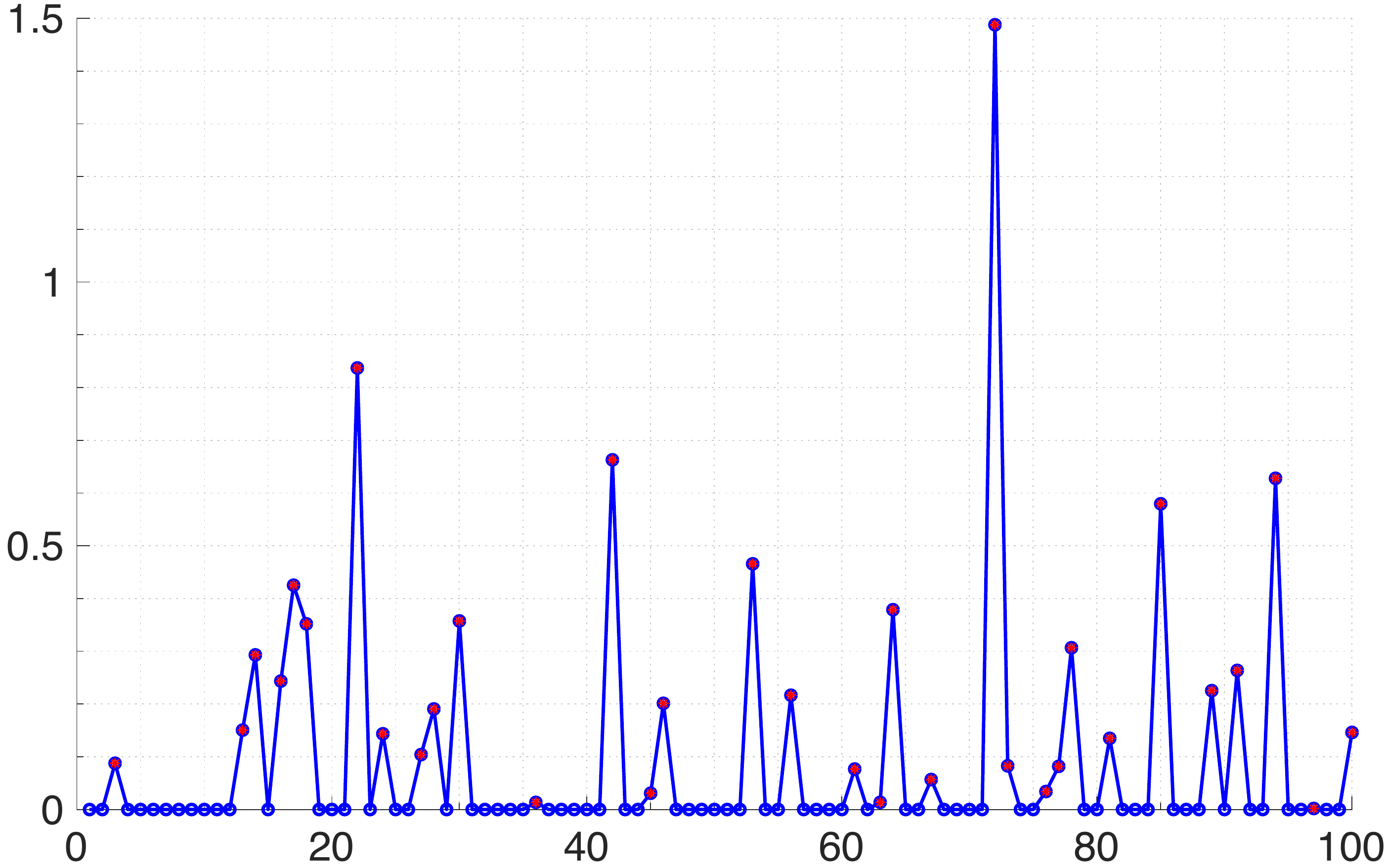}	&	 \includegraphics[width=6cm]{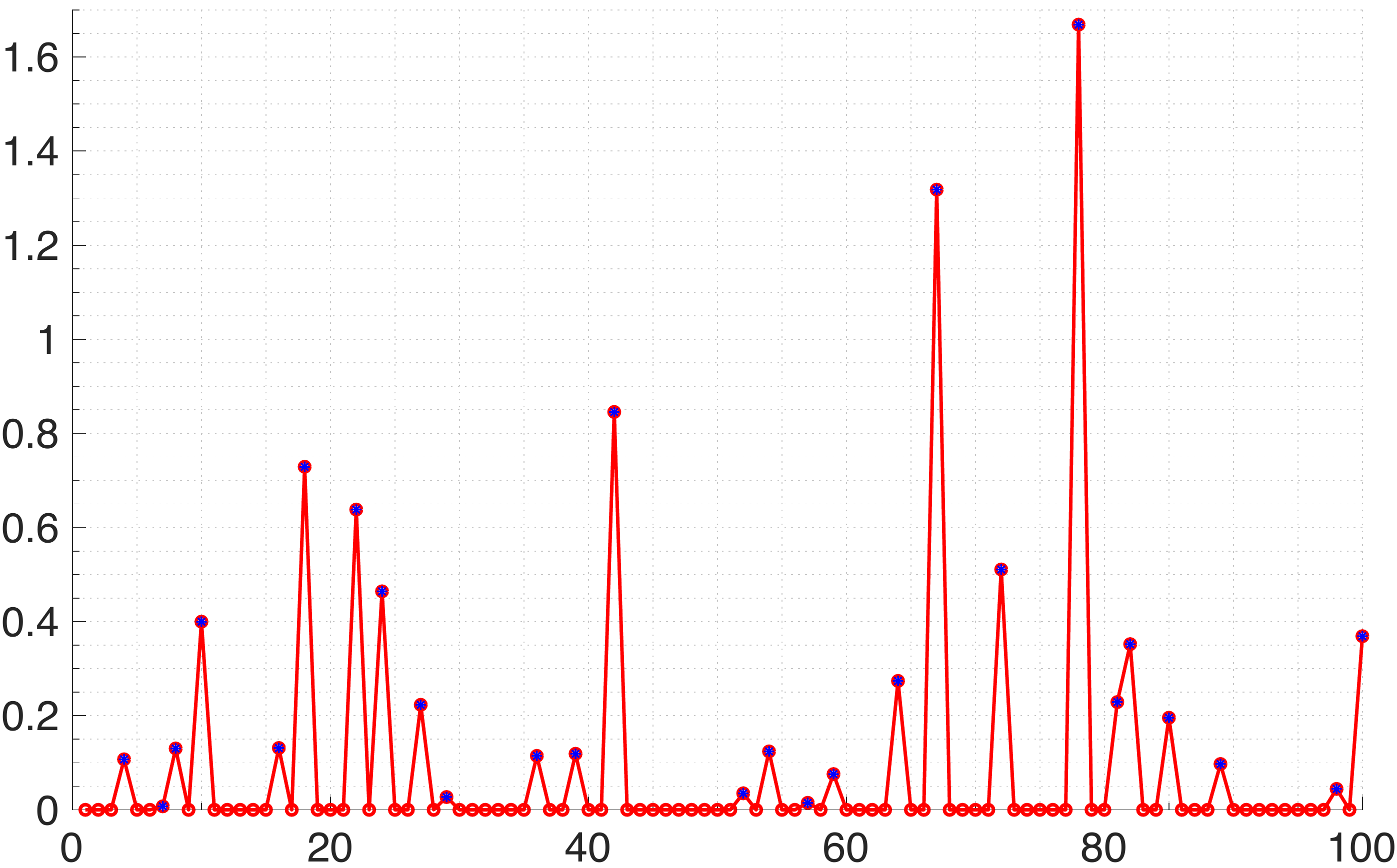}	 \\
		\end{tabular}
		\caption{\textbf{(Left Column)} Ratios $(f_{BB} - f_{SF}) / ((f_{BB} + f_{SF})/2)$ for $100$ dense random DH examples
		of order $800$, where $f_{BB}$ and $f_{SF}$ denote the maximal values of 
		$\sigma_{\max}(CQ ({\rm i} \omega I - (J-R)Q)^{-1}B)$
		over $\omega$ computed by the BB algorithm and the subspace framework (i.e., Algorithm \ref{alg:dti}
		for the plot on the top, \cite[Algorithm 1]{Aliyev2017} for the plot at the bottom).
		\textbf{(Right Column)} Same as the left column on the same $100$ dense random examples of order $800$ 		except now this concerns a comparison of the maximization of $\sigma_{\max}(C ({\rm i} \omega I - (J-R)Q)^{-1}(J-R)B)$ using Algorithm \ref{alg:dti2} in the top plot and \cite[Algorithm 1]{Aliyev2017} in  the bottom plot.}
%		\centering
		\label{fig:dense_functions}
\end{figure}

\medskip

\noindent
\textbf{Sparse Random Examples.}
The $5000\times 5000$ sparse matrices $J, Q, R$ are constrained to be banded with bandwidth $10$. The matrix $J$ is generated as in the dense family randomly
using the \texttt{randn} command, but the entries that fall outside of the bandwidth $10$ are set equal to zero. The matrix $Q>0$ is created using the commands
\begin{verbatim}
>> A = sprandn(n,n,1/n);  >> Q = (A + A')/2,
\end{verbatim}
followed by setting the entries outside the bandwidth $10$ again to zero. Finally, the
following commands ensure that $Q>0$.
\begin{verbatim}
>> mineig = eigs(Q,1,'smallestreal');
>> if (mineig<10^-4) Q=Q+(-mineig+5*rand)*speye(n); end
\end{verbatim}
To form $R\geq 0$, first a diagonal matrix $D$ of random rank
not exceeding $500$ is generated by the commands
\begin{verbatim}
>> p = round(500*rand);   D = sparse(5000,5000);   h = n/p;
>> for j=1:p  k = floor(j*h); D(k,k) = 5*rand;  end
\end{verbatim}
Then we set \texttt{R = sparse(X'*D*X)} for a square random matrix $X$ with bandwidth $5$.
The matrices $B$, $C$ are random, and of size $5000 \times 2$, $2 \times 5000$, respectively.
The spectrum of a typical such sparse  matrix $(J-R)Q$ is displayed
in Figure \ref{fig:spectra_randomPH} on the right-hand side.
%It is typical for this family
%that the spectrum is dense near the imaginary axis on the left-hand side %of the complex plane.

We again apply Algorithms \ref{alg:dti} and \ref{alg:dti2} to $100$ such random sparse examples.
Since  the matrices are too large to apply the BB algorithm, we
compare the structure-preserving algorithms directly with the unstructured algorithm
\cite[Algorithm 1]{Aliyev2017}. The retrieved estimates of
$\max_{\omega \in {\mathbb R} \cup \infty} \sigma_{\max}(G({\rm i} \omega))$ for
$G({\rm i}\omega) = CQ (\ri \omega I - (J-R)Q)^{-1}B$ and $G({\rm i}\omega) = C (\ri \omega I - (J-R)Q)^{-1}(J-R)B$
are compared on the top and at the bottom, respectively, in Figure \ref{fig:sparse_functions}.
Specifically, the ratio
$
		\frac{2 (f_{\rm ST} - f_{\rm UN})}{f_{\rm ST} + f_{\rm UN}}
$
is plotted for each random example with $f_{\rm UN}$ denoting the estimate by the unstructured
algorithm \cite[Algorithm 1]{Aliyev2017}, and $f_{\rm ST}$ denoting the estimate by the structured
algorithm, i.e., Algorithm \ref{alg:dti} for the top plot, Algorithm \ref{alg:dti2} for the bottom plot.
According to the top plot, which concerns the computation of $r(J; B, C) = R(R; B, C)$, the two
algorithms return exactly the same results (up to tolerances) for all but $6$ examples;
the structured algorithm returns better estimates for $4$ of these $6$ examples, while
the unstructured algorithm returns better estimates for the other two. The structured algorithm
appears to be even more robust for the computation of $r(Q; B, C)$ in terms of avoiding locally
optimal solutions away from global solutions; as displayed at the
bottom, the structured algorithm returns a better estimate for $40$ of the $100$ examples,
the unstructured algorithm returns the better estimate for $6$ examples, and the results
match exactly up to the tolerances for the remaining $54$ examples.

The structured algorithms perform typically fewer iterations as compared to the
unstructured algorithm. Indeed the average value of the number of subspace iterations performed on
these $100$ examples is $6.3$  for the structured and $9.8$ for the unstructured algorithm for the computation of $r(J; B,C) = r(R; B, C)$, while these average values are $9.7$ and $12.9$ for the computation
of $r(Q; B, C)$. On the other hand, the unstructured algorithm is slightly superior when run-times are taken into account. The average run-times are $14.3\, s$ for the structured and
 $12.7\, s$ for the unstructured algorithm for the computation of $r(J; B, C) = r(R; B, C)$, whereas these figures are
$16.5\, s$ and $14\, s$ for the computation of $r(Q; B, C)$.

We also list the computed maximal values of $\sigma_{\max}(CQ({\rm i} \omega I - (J-R)Q)^{-1}B)$
and $\sigma_{\max}(C({\rm i} \omega I - (J-R)Q)^{-1}(J-R)B)$ over $\omega$
for the first $10$ of these sparse examples in Tables~\ref{table:sparse_iterates_alg} and
\ref{table:sparse_iterates_algQ}. Included in these tables are also the number of subspace
iterations, as well as the run-time required by the structured and the unstructured algorithm.

\begin{figure}
\begin{center}
		\begin{tabular}{cc}
			\includegraphics[width=8cm]{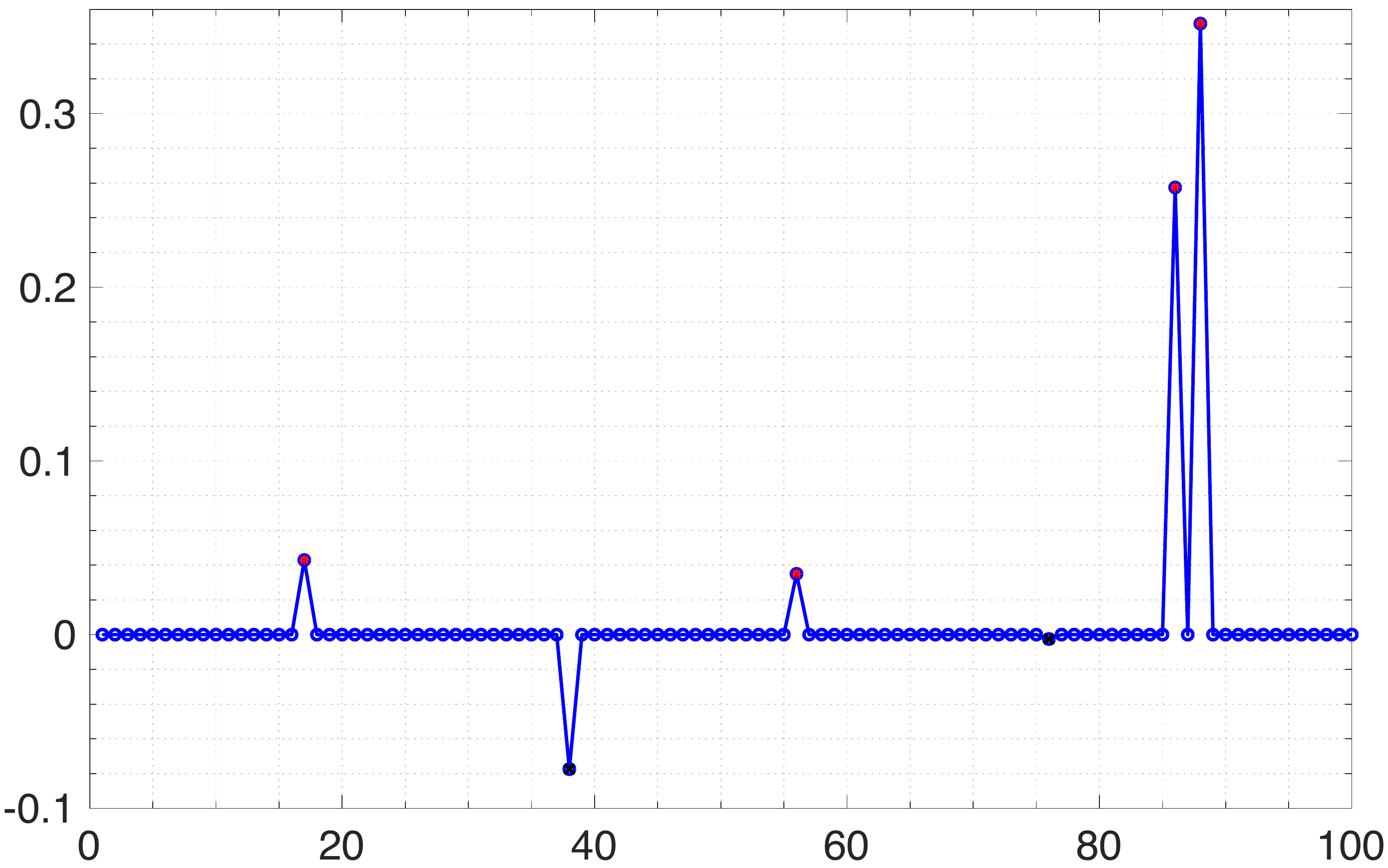} 	\\
			 \includegraphics[width=8cm]{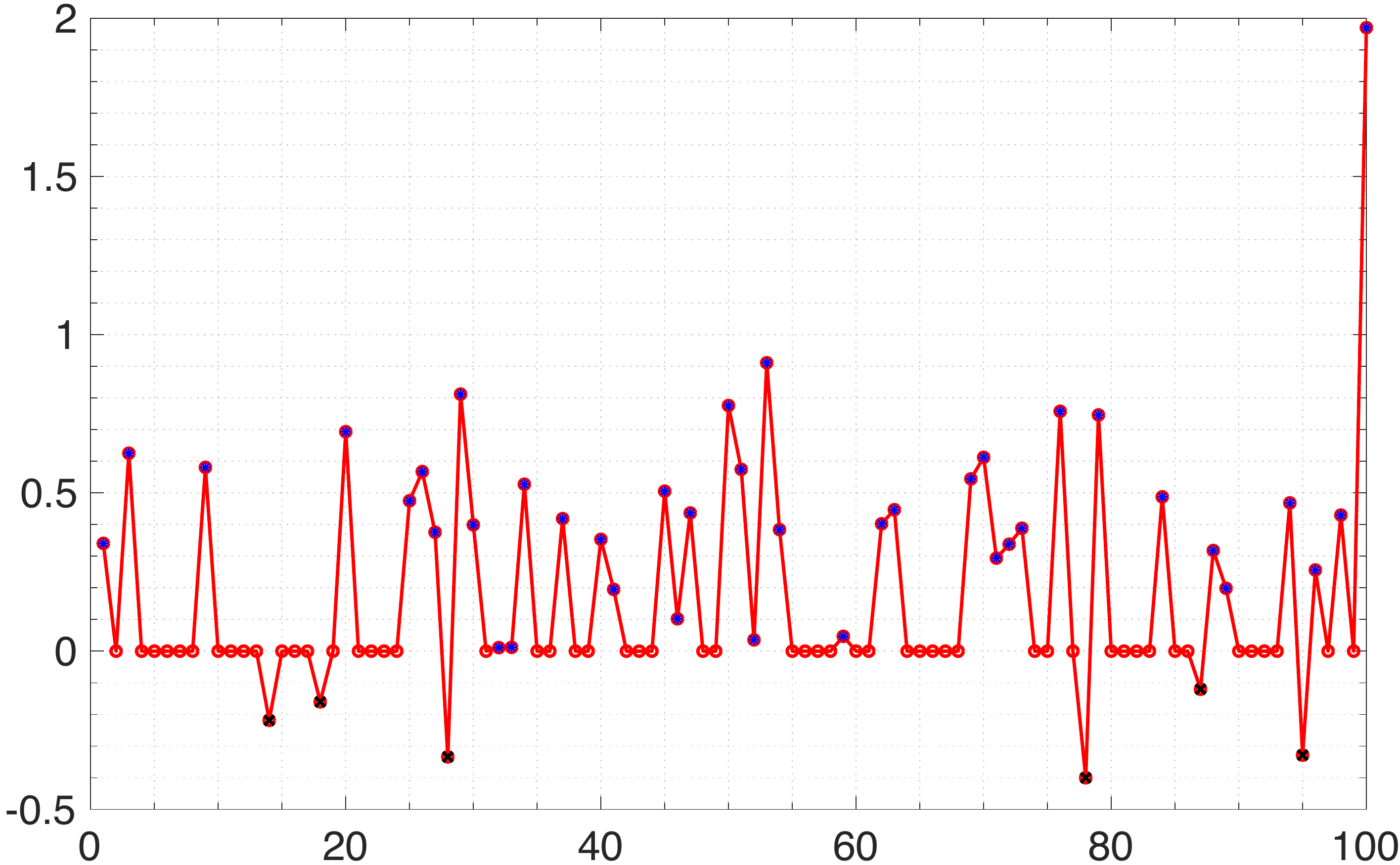}	 \\
		\end{tabular}
\end{center}
		\caption{ \textbf{(Top)} Plot of the ratios $(f_{\rm ST} - f_{\rm UN})/((f_{\rm ST} + f_{\rm UN})/2)$
		on $100$ sparse random examples of order $5000$, where $f_{\rm ST}, f_{\rm UN}$ represent
		the computed maximal value of $\sigma_{\max}(CQ({\rm i} \omega I - (J-R)Q)^{-1} B)$
		over $\omega$ by Algorithm \ref{alg:dti} and \cite[Algorithm 1]{Aliyev2017}, respectively.
		\textbf{(Bottom)} Similar to the top plot, only now for the computed maximal values $f_{\rm ST}$,
		$f_{\rm UN}$ of $\sigma_{\max}(C({\rm i} \omega I - (J-R)Q)^{-1} (J-R)B)$ over $\omega$
		by Algorithm \ref{alg:dti2} and \cite[Algorithm 1]{Aliyev2017}, respectively. }
%		\centering
		\label{fig:sparse_functions}
\end{figure}

\begin{table}
\begin{center}
\begin{tabular}{|c||cc|cc|cc|}
\hline
  &  \multicolumn{2}{c}{$\max_{\omega} \sigma_{\max}(CQ({\rm i} \omega I - (J-R)Q)^{-1} B)$}  &
  			\multicolumn{2}{|c|}{$\#$ iterations}	&
			\multicolumn{2}{|c|}{run-time} \\
 [0.5ex]
  $\#$ &  Alg.~\ref{alg:dti} & \cite[Alg. 1]{Aliyev2017}
  			&  Alg.~\ref{alg:dti} & \cite{Aliyev2017}
					&	Alg.~\ref{alg:dti} & \cite{Aliyev2017}	\\ [0.5ex]  \hline	\hline
  % & Alg.~\ref{alg:dti} & \texttt{eigopt}
 % inserts single-line

1     &   $1.393086\times 10^3$    &     $1.393086\times 10^3$       & 	2	&	3	&	10.7	&	10.1	\\

2      & $8.323309\times 10^2$     &      $8.323309\times 10^2$       &	 6	&	11	&	12.3	&	11.9	\\

3      &  $1.289416 \times 10^3$     &      $1.289416 \times 10^3$   	&	4	&	3	&	11.2	  &	10.6	\\

4   	&  $8.850355 \times 10^2$     &     $8.850355 \times 10^2$      &	 5	&	16	&	13.2	 &	14	\\

5   	&  $6.891467 \times 10^2$      &  $6.891467 \times 10^2$      &   	26	&	46	&	30.3		&	31.4	\\

6   	&  $5.652337\times 10^6$      &   $5.652337\times 10^6$     &  	1	&	1	&	6.6		&	6.1	\\

7    	&  $8.834190\times 10^2$      &     $8.834190\times 10^2$   &    1	&	1	&	9.7		&	9.6	\\

8     	&  $3.402375\times 10^3$     &   $3.402375\times 10^3$     &   	1	&	2	&	9.1		&	9	\\

9     	&  $8.240097\times 10^2$ 	&   $8.240097\times 10^2$    &   	2	&	3	&	12.3	&	12.1		\\

10    &  $1.256781\times 10^3$      &   $1.256781\times 10^3$     &   	2	&	10	&	10.9	&	11.3	\\

\hline

\end{tabular}
\end{center}
\caption{Comparison of Algorithms \ref{alg:dti}  and \cite[Algorithm 1]{Aliyev2017} to compute
$r(R; B,C) = r(J; B, C)$ on sparse random DH systems of order $5000$
 of bandwidth $10$. The MATLAB commands to generate these random
sparse examples are explained in Section \ref{sec:numexp_syn}. The third column lists the number
of subspace iterations, while the run-times (in $s$) are listed in the last column.}
\label{table:sparse_iterates_alg}
\end{table}

\begin{table}
\hskip -2ex
\begin{tabular}{|c||cc|cc|cc|}
\hline
  &  \multicolumn{2}{c}{$\max_{\omega} \sigma_{\max}(C({\rm i} \omega I - (J-R)Q)^{-1} (J-R)B)$}  &
  			\multicolumn{2}{|c|}{$\#$ iterations}	&
			\multicolumn{2}{|c|}{run-time} \\
 [0.5ex]
  $\#$ &  Alg.~\ref{alg:dti2} & \cite[Alg. 1]{Aliyev2017}
  			&  Alg.~\ref{alg:dti2} & \cite{Aliyev2017}
					&	Alg.~\ref{alg:dti2} & \cite{Aliyev2017}	\\ [0.5ex]  \hline	\hline
  % & Alg.~\ref{alg:dti} & \texttt{eigopt}
 % inserts single-line

1     &     $1.320177\times 10^3$  &     $\mathbf{9.360406\times 10^2}$     & 	15	&	39	&	18.3	&	26.7	\\

2       & 	$8.284036\times 10^2$     &     $8.284036\times 10^2$       &	2	&	13	&	11.1	&	12.7	\\

3      &  $9.288427\times 10^2$    &     $\mathbf{4.863413 \times 10^2}$  	&	31	&	11	&	46.1	  &	11.8	\\

4   	&  $6.583171\times 10^2$     &     $6.583171 \times 10^2$      &	 5	&	17	&	13.1	 &	14.6	\\

5   	&   $7.722736\times 10^2$     &	  $7.722736\times 10^2$     &   	3	&	9	&	11.3		&	11.4	\\

6   	&  $2.260647\times 10^6$       &   $2.260647\times 10^6$     &  	1	&	1	&	6.4		&	6.1	\\

7    	&  $1.660164\times 10^3$     &     $1.660164\times 10^3$   &    	4	&	12	&	10.6		&	11.1	\\

8     	&  $1.515824\times 10^3$      &     $1.515824\times 10^3$   &   	8	&	9	&	11.8		&	10	\\

9     	&  $4.610935\times 10^2$ 	&    $\mathbf{2.535771\times 10^2}$  &   13	&	4	&	17.9	&	12.1		\\

10    &  $1.011299\times 10^3$      &   $1.011299\times 10^3$      &   	3	&	7	&	10.8	&	10.6	\\

\hline

\end{tabular}
\caption{Comparison of Algorithms \ref{alg:dti2} and \cite[Algorithm 1]{Aliyev2017} to
compute $r(Q; B, C)$. The display is analogous to Table \ref{table:sparse_iterates_alg}, in particular the numerical experiments
are carried out exactly on the same $10$ sparse random examples of order $5000$ employed for Table \ref{table:sparse_iterates_alg}. }
\label{table:sparse_iterates_algQ}
\end{table}

\subsubsection{The FE model of a Disk Brake}\label{sec:numexp_brake}
%\textbf{Solutions of Large-Scale Linear Systems.}
The only large-scale computation required by Algorithm \ref{alg:dti} is the solution of the linear systems
\begin{equation}\label{eq:large_lin_sys}
	D({\rm i} \omega) \: X = B	\quad	{\rm and}	\quad	D({\rm i} \omega)^2 \: Y = B
\end{equation}
at a given $\omega \in {\mathbb R}$ in lines \ref{defn_init_subspaces0} and \ref{defn_later_subspaces},
where $D({\rm i} \omega) = {\rm i} \omega I  - (J - R)Q$. For the DH system resulting from a FE model of a disk brake in (\ref{insta}) and (\ref{eq:insta_matrices}), the mass matrix $M$ and the stiffness matrix
$K(\Omega)$ are available from the FE modeling. In other words, we have the sparse matrix $Q^{-1}$, but not $Q$, which turns out to be dense. Trying to invert $Q^{-1}$ and/or solve a linear system
with the coefficient matrix $Q$  is computationally very expensive and would require full matrix storage.

This difficulty can be avoided by exploiting that
\[
	( {\rm i} \omega I  - (J - R)Q )^{-1} B	\;\;\;	=	\;\;\;	Q^{-1} ( {\rm i} \omega Q^{-1}  - (J - R) )^{-1} B.
\]
Hence, to compute $X, Y$ as in (\ref{eq:large_lin_sys}), we proceed as follows.
\begin{enumerate}
	\item[\bf (1)] We first solve $( {\rm i} \omega Q^{-1}  - (J - R) ) \widehat{X} = B$ for $\widehat{X}$,
	and set $X = Q^{-1} \widehat{X}$.
	\item[\bf (2)] Then we solve $( {\rm i} \omega Q^{-1}  - (J - R) ) \widehat{Y} = X$ for $\widehat{Y}$,
	and set $Y = Q^{-1} \widehat{Y}$.
\end{enumerate}
A second observation that further speeds up the computation
%(a few times in practice)
is the particular structure of the coefficient matrix $\{ {\rm i} \omega Q^{-1}  - (J - R) \}$ %for the linear systems above
%for the particular choices of $J, R, Q$ in (\ref{eq:insta_matrices})
with $N = 0$. Setting $\widetilde{M}({\rm i} \omega; \Omega) := {\rm i} \omega	M +  D(\Omega) + G(\Omega)$, we have
\[
	{\rm i} \omega Q^{-1}  - (J - R)	\;\;	=	\;\;		
			\left[
				\begin{array}{cc}
					\widetilde{M}({\rm i} \omega; \Omega)		&	K(\Omega)	\\
					-K(\Omega)							&		{\rm i} \omega K(\Omega)
				\end{array}
			\right].
\]
Hence, to solve $\left\{ {\rm i} \omega Q^{-1}  - (J - R) \right\} Z = W$, for a given
$W
	=
	\left[
		\begin{array}{cc}
			W_1^T		&		W_2^T
		\end{array}
	\right]^T$
and the unknown
$Z
	=
	\left[
		\begin{array}{cc}
			Z_1^T		&		Z_2^T
		\end{array}
	\right]^T$
with $W_1, W_2, Z_1, Z_2$ having all equal number of rows, we perform a column block permutation and then eliminate the lower left block to obtain
\[
	\left[
	\begin{array}{cc}
		K(\Omega)		&		\widetilde{M}({\rm i} \omega; \Omega)	\\
			0		&		-K(\Omega) - {\rm i} \omega \widetilde{M}({\rm i} \omega; \Omega)
	\end{array}
	\right]
	\left[
	\begin{array}{c}
		Z_2	\\
		Z_1
	\end{array}
	\right]
			\;\;	=	\;\;
	\left[
	\begin{array}{c}
		W_1	\\
		W_2 - {\rm i} \omega W_1
	\end{array}
	\right],	
\]
which in turn yields
\[
	(-K(\Omega) - {\rm i} \omega \widetilde{M}({\rm i} \omega; \Omega) ) Z_1	=	W_2 - {\rm i} \omega W_1,\
%\quad	{\rm and}	\quad
	K(\Omega) Z_2 = W_1 - \widetilde{M}({\rm i} \omega; \Omega) Z_1.
\]
At every subspace iteration, the highest costs arise from the computation of the $LU$ factorizations of the sparse matrices $K(\Omega)$ and
$K(\Omega) + {\rm i} \omega \widetilde{M}({\rm i} \omega; \Omega)$.
%; these sparse $LU$ factorizations are
%the most expensive operations that the algorithm has to perform.

The main cost for Algorithm \ref{alg:dti2} is the solution of the linear systems
\[
	D({\rm i} \omega)^H X \;\; = \;\; C^H
		\quad	{\rm and}		\quad
	\left[ D({\rm i} \omega)^H \right]^2 Y \;\; = \;\; C^H
\]
at a given $\omega \in {\mathbb R}$. This can be treated similarly by exploiting that
\[
	D({\rm i} \omega)^{-H}  C^H
			\;\;  =  \;\;
	( {\rm i} \omega Q^{-1} - (J-R))^{-H} (CQ^{-1})^H.
\]

%\medskip
%\noindent
%\textbf{Numerical Results.}
We have applied Algorithm \ref{alg:dti} to compute the unstructured stability radius $r(R; B, B^T)$
for the DH system of the form (\ref{insta}), (\ref{eq:insta_matrices}) resulting from the FE brake model
with $N = 0$, where $G(\Omega), K(\Omega), D(\Omega), M \in {\mathbb R}^{4669 \times 4669}$
so that $J, R, Q \in {\mathbb R}^{9338\times 9338}$.

The plot of the computed $r(R; B, B^T)$ vs the rotation speed $\Omega$ is presented
in Figure \ref{fig:brake_squeal_unstructured} at lower frequencies (i.e., $\Omega \in [2.5, 100]$)
on the top, and at higher frequencies (i.e., $\Omega \in [900, 1700]$)  at the bottom.
For smaller frequencies, the stability radius initially decreases with respect to $\Omega$,
but around $\Omega = 1100$ the stability radius suddenly increases.
% It is not possible to draw
% a definite conclusion from these figures at what $\Omega$ values the stability radius
% is maximized, and at what $\Omega$ values it is minimized.
The non-smooth nature of
the stability radius with respect to $\Omega$ is apparent from the figure. One should note, in particular,
the sharp turns near $\Omega = 1120$ and $\Omega = 1590$; this non-smoothness is
due to the fact that $\sigma_{\max}(B^TQ ({\rm i} \omega I - (J-R)Q)^{-1} B)$ has multiple
global maximizers. This means that two distinct points on the imaginary axis can be attained with
perturbations of minimal norm.

The computed values of $r(R; B, B^T)$ are listed in Table~\ref{tab:brake_squeal_unstructured}
for some values of $\Omega$. In this table, for each $\Omega$, the  value $\omega_\ast$, where the singular value function
$\sigma_{\max}(B^TQ ({\rm i} \omega I - (J-R)Q)^{-1} B)$ is maximized globally is displayed, the number of subspace iterations, the
run-time (in $s$) and the subspace dimension at termination are included as well.
In all cases, $2$ or $3$ subspace iterations are sufficient to achieve the prescribed accuracy tolerance. This leads to considerably smaller reduced systems of size $72 \times 72$ or $78\times 78$
compared with the original problem of size $9338\times 9338$. The $\omega$ value maximizing
$\sigma_{\max}(B^TQ ({\rm i} \omega I - (J-R)Q)^{-1} B)$ differs substantially, depending on whether the frequency $\Omega$ is small or large.

The resulting reduced problems at termination
%(with matrices of size $72\times 72$ or $78\times 78$, respectively)
capture the full problem remarkably well around the global maximizer.
This is depicted in Figure~\ref{fig:brake_squeal_unstructured_sval},
where, for $\Omega = 1000$, the singular value function $f(\omega) = \sigma_{\max}(B^TQ ({\rm i} \omega I - (J-R)Q)^{-1} B)$
for the full problem (solid curve) and %the singular value function
$f_k(\omega) = \sigma_{\max}(B^T_kQ_k ({\rm i} \omega I - (J_k-R_k)Q_k)^{-1} B_k)$ for the reduced problem at termination
(dashed curve) are plotted near the global maximizer $\omega_\ast = -178880.9$. It is fairly difficult to
distinguish these two curves from each other. % in this plot.

\begin{table}
	\begin{tabular}{|c||ccccc|}
	\hline
			$\Omega$		&	$r(R; B, B^T)$	&	$\omega_\ast$			&	iterations		&	run-time 	&	dimension	 \\
	\hline
	\hline
			 2.5			&	0.01066		&	$-1.938\times 10^5$		&	2			&	22.0		&	72	\\		
			 5			&	0.01038		&	$-1.938\times 10^5$		&	2			&	21.7		&	72	\\
			 10			&	0.01026		&	$-1.938\times 10^5$		&	3			&	24.9		&	78	\\
			 50			&	0.00999		&	$-1.938\times 10^5$		&	2			&	22.0		&	72	\\
			 100			&	0.00988		&	$-1.938\times 10^5$		&	2			&	22.1		&	72	\\	
			 1000		&	0.00809		&	$-1.789\times 10^5$		&	2			&	21.4		&	72	\\
			 1050		&	0.00789		&	$-1.789\times 10^5$		&	2			&	21.8		&	72	\\
			 1100		&	0.00834		&	$-1.789\times 10^5$		&	3			&	24.8		&	78	\\
			 1116		&	0.01091		&	$-1.789\times 10^5$		&	3			&	25.6		&	78	\\
			 1150		&	0.00344		&	$-1.742\times 10^5$		&	2			&	21.5		&	72	\\
			 1200		&	0.00407		&	$-1.742\times 10^5$		&	2			&	21.8		&	72	\\
			 1250		&	0.00471		&	$-1.742\times 10^5$		&	2			&	21.8		&	72	\\
			 1300		&	0.00516		&	$-1.742\times 10^5$		&	2			&	21.2		&	72	\\
	\hline
	\end{tabular}
\caption{Computed stability radii $r(R; B, B^T)$ by Algorithm \ref{alg:dti} for several  $\Omega$ values
for the DH system of order $9338$ originating from the FE brake model. The other columns display $\omega_\ast$ corresponding to $\: \argmax_{\omega} \sigma_{\max}(B^TQ ({\rm i} \omega I - (J-R)Q)^{-1} B) \:$, the number of subspace iterations, the total run-time (in $s$)
and the subspace dimension at termination. }
\label{tab:brake_squeal_unstructured}
\end{table}

\begin{figure}
	\begin{center}
		\begin{tabular}{c}
			\includegraphics[width=8cm]{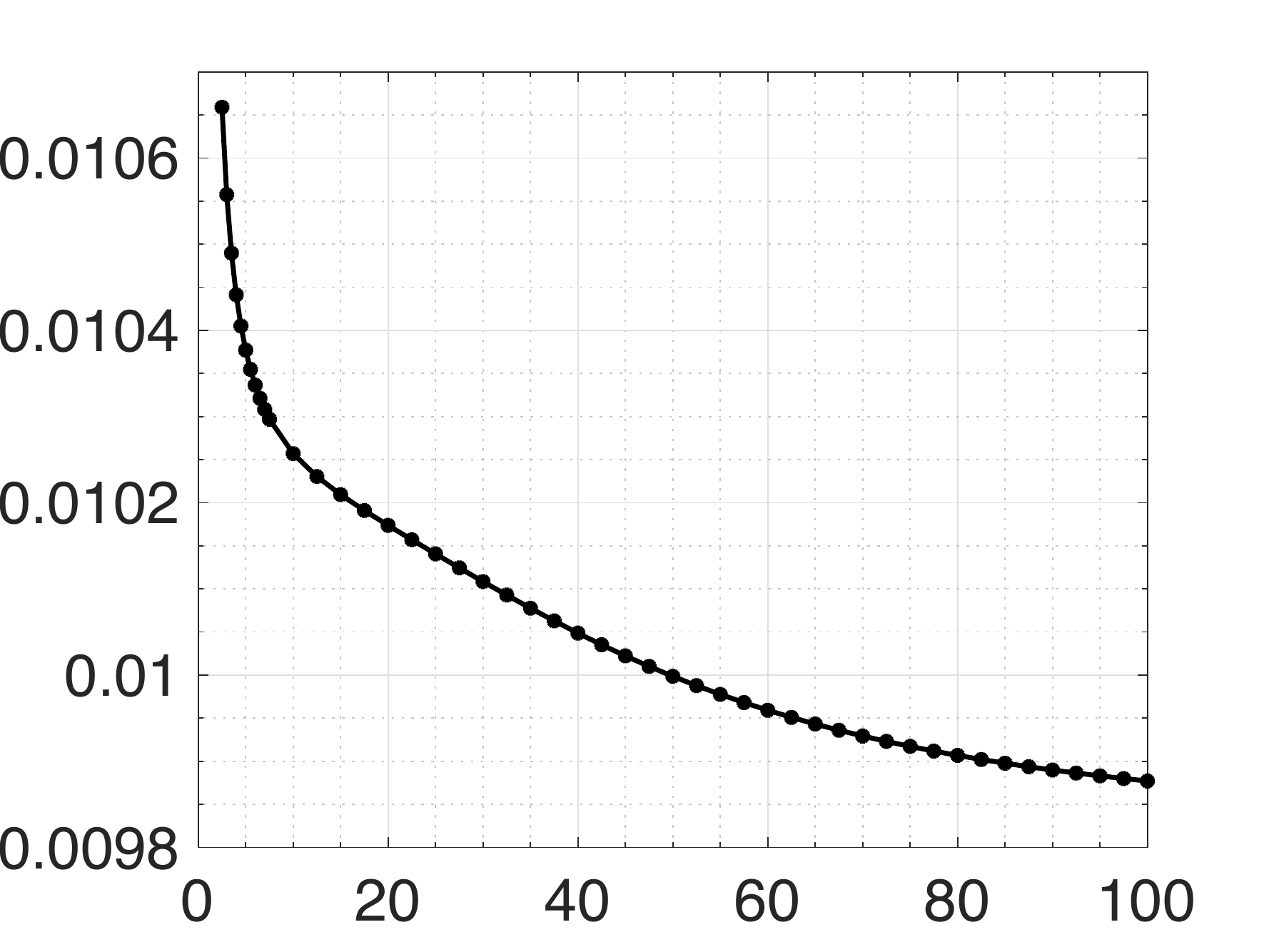} 	\\
			 \includegraphics[width=8cm]{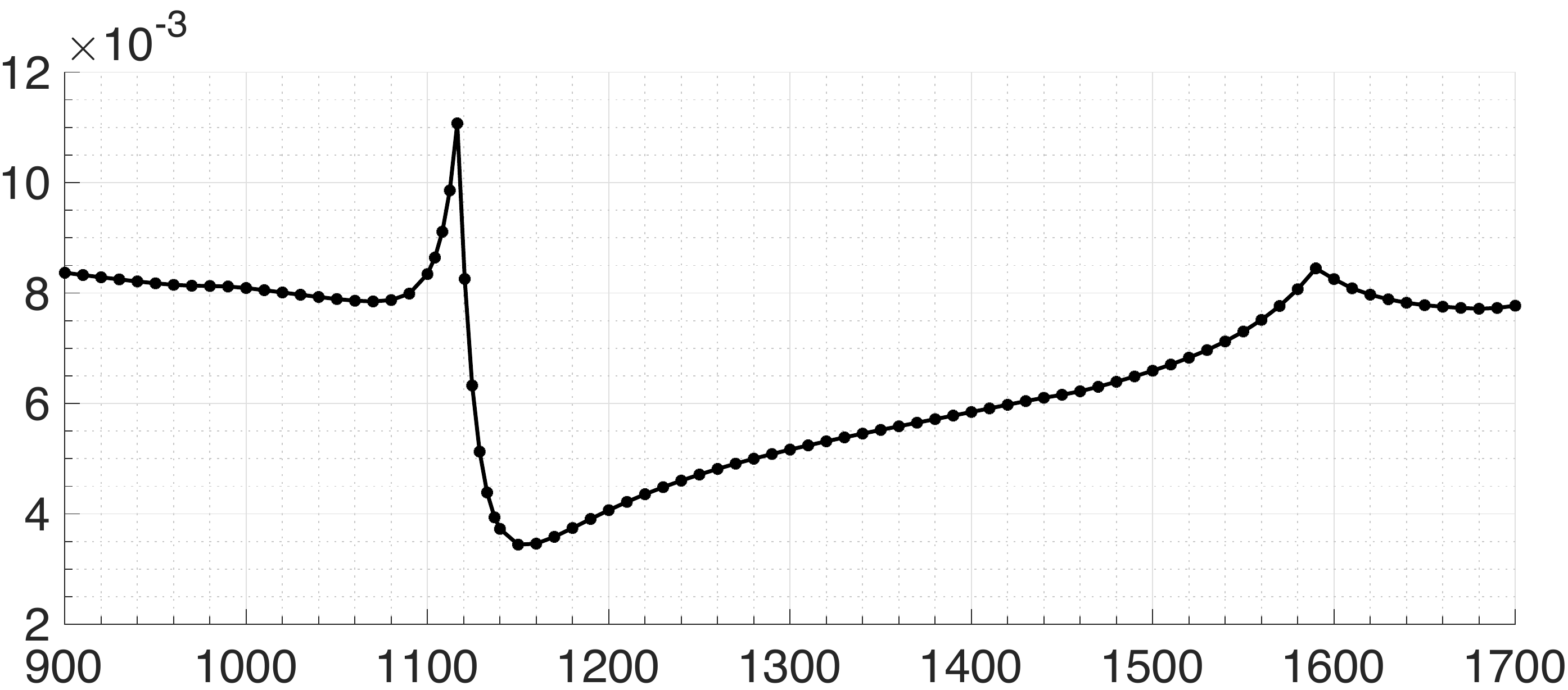}	
		\end{tabular}
	\end{center}
		\caption{ Plot of the stability radius $r(R; B, B^T)$ for the DH system (\ref{insta}), (\ref{eq:insta_matrices})
		resulting from the FE model of a disk-brake as a function of the rotation speed $\Omega$
		for $\Omega \in [2.5, 100]$ (top plot), and $\Omega \in [900, 1700]$ (bottom plot).
		The order of the DH system under consideration in this plot is $9338$. }
		\label{fig:brake_squeal_unstructured}
\end{figure}

\begin{figure}
%		\hskip -2ex
\begin{center}
		\begin{tabular}{c}
		\includegraphics[width=8cm]{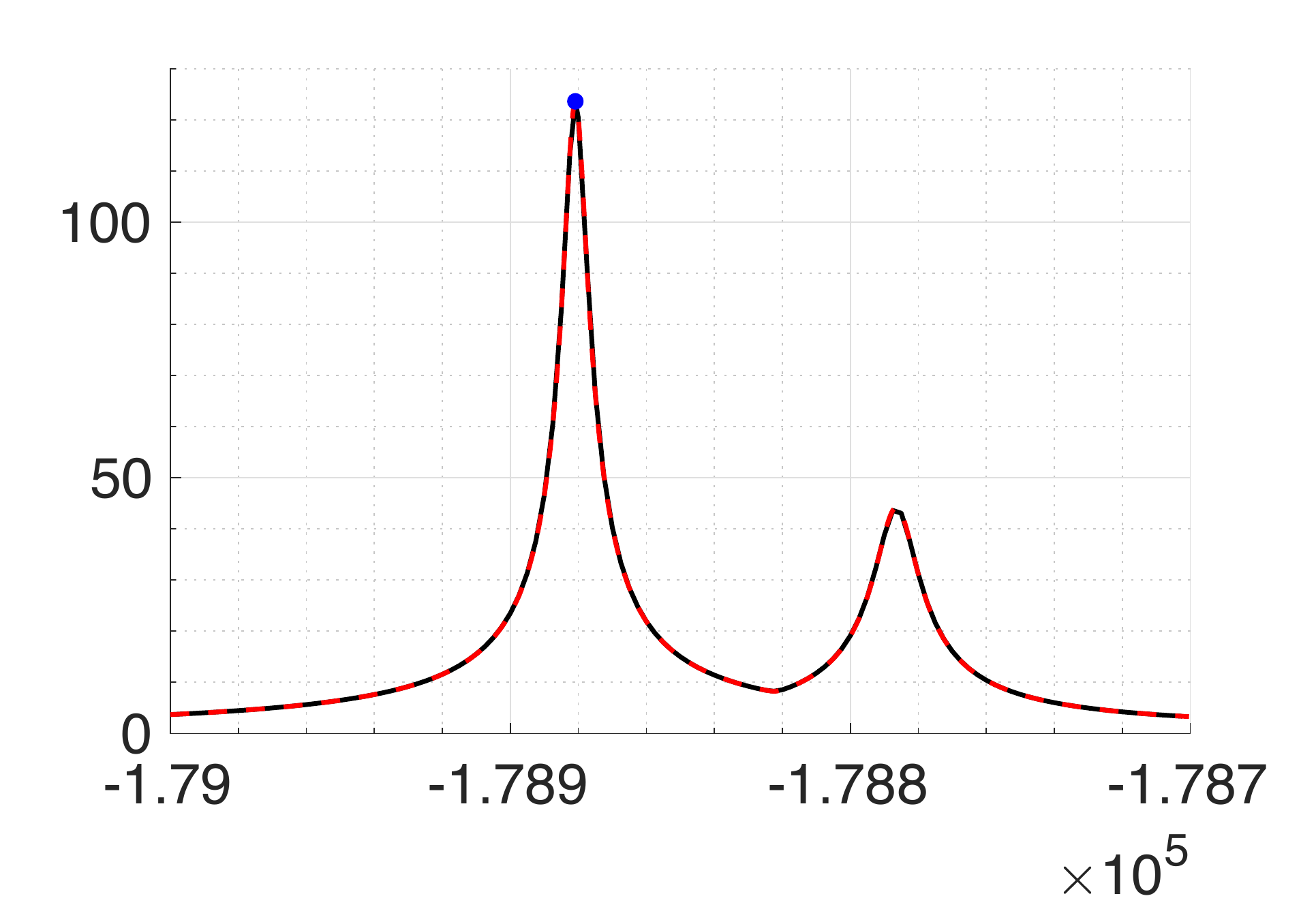}
		\end{tabular}
\end{center}
		\caption{ Plot of the singular value functions $f(\omega) = \sigma_{\max}(B^TQ ({\rm i} \omega I - (J-R)Q)^{-1} B)$ (solid curve)
		and $f_k(\omega) = \sigma_{\max}(B^T_kQ_k ({\rm i} \omega I - (J_k-R_k)Q_k)^{-1} B_k)$ (dashed curve) %for the reduced problem
		at termination of Algorithm \ref{alg:dti}  near the global maximizer $\omega_\ast = -178880.9$
		for the DH system of order $9338$ arising from the FE disk-brake model with $\Omega = 1000$. The circle marks $(\omega_\ast, f(\omega_\ast))$.}
		\label{fig:brake_squeal_unstructured_sval}
\end{figure}
\section{Computation of the Structured Stability Radius}\label{sec:structured}
In the last section we have studied stability radii for dissipative Hamiltonian systems where the restriction matrices, however, allowed unstructured perturbations in the system coefficients. In this section we put additional constraints on the perturbations, in particular we
require that the perturbations are structured themselves. We discuss only perturbations in the dissipation matrix, since this is usually the most uncertain part of the system, due to the fact that modeling damping or friction very exactly is usually extremely difficult. We deal with the computation of $r^{\rm Herm}(R; B)$ defined as in (\ref{eq:structured_radii_defn}).
We first describe a numerical technique for small-scale problems in Section \ref{sec:small_scale_st} and then develop a subspace
framework that converges superlinearly with respect to the subspace dimension
in Section \ref{sec:large_scale_st}. Both techniques
use the eigenvalue optimization characterization of $r^{\rm Herm}(R; B)$ in
Theorem \ref{thm_str_Herm}.

\subsection{Small-Scale Problems}\label{sec:small_scale_st}
\subsubsection{Inner Maximization Problems}\label{sec:small_scale_inner}
The eigenvalue optimization characterization of $r^{\rm Herm}(R; B)$ is a min-max problem, where the inner 
maximization problem is concave, indeed it can alternatively be expressed as a
semi-definite program (SDP). Formally, for a given $\omega \in {\mathbb R}$, and $H_0({\rm i} \omega), H_1({\rm i} \omega)$
representing the Hermitian matrices defined in Theorem \ref{thm_str_Herm}, we have
\begin{equation}\label{eq:inner_max}
	\begin{split}
	\widetilde{\eta}^{\rm Herm}(R; B, {\rm i}\omega)
			& \;	=	\;
	\sup_{t \in {\mathbb R}} \: \lambda_{\min} (H_0({\rm i} \omega) + t H_1 ({\rm i} \omega))	\\
			& \;	=	\;
	\sup\{   z   \; | \;    z,t \in {\mathbb R} \;\; {\rm s.t.} \;\;  H_0({\rm i} \omega) + t H_1 ({\rm i} \omega)  - zI \geq 0		\},
	\end{split}
\end{equation}
where the characterization in the second line is a linear convex SDP. Here $\widetilde{\eta}^{\rm Herm}(R; B, {\rm i}\omega)$
is related to a structured backward error for the eigenvalue ${\rm i} \omega$, specifically it corresponds to the
square of the distance (see \cite[Definition 3.2 and Theorem 4.9]{MehMS16a})
\begin{equation}\label{eq:backward_error}
	\begin{split}
	\eta^{\rm Herm}(R; B, {\rm i}\omega)
			\; := \;	\hskip 49ex	\\
	\hskip 13ex
	 \inf\{ \|\Delta \|_2 \; | \; \Delta = \Delta^H, \;\;	  {\rm i} \omega \in \Lambda \big( (J - R)Q - (B\Delta B^H)Q \big) \}.
	 \end{split}
\end{equation}
We have that $\eta^{\rm Herm}(R; B, {\rm i}\omega)$ is finite if and only if the suprema in (\ref{eq:inner_max})
are attained, which happens  if and only if $H_1({\rm i}\omega)$ is indefinite, i.e., $H_1({\rm i}\omega)$ has both negative and positive eigenvalues.

The most widely used techniques to solve a linear convex SDP  are different forms of interior-point methods.
Implementations of some of these interior-point methods are made available through the package \texttt{cvx} \cite{Grant2008,cvx}.
Hence, one option is to use \texttt{cvx}
to compute $\widetilde{\eta}^{\rm Herm}(R; B, {\rm i}\omega)$ directly. An alternative, and also theoretically well understood approach, is to employ the software package \texttt{eigopt} \cite{Mengi2014} for
the eigenvalue optimization problem in the first characterization in (\ref{eq:inner_max}). This second approach forms piece-wise quadratic functions that lie globally above the eigenvalue function, and maximizes these piece-wise quadratic functions instead of the eigenvalue function. Each piece-wise quadratic function is defined as the minimum
of several other quadratic functions, all of which have the same curvature $\gamma$
(which must be a global upper bound on the second derivative of the eigenvalue function at all points where the eigenvalue function is differentiable).
%and lie above the eigenvalue function.
Any slightly positive real number for the curvature $\gamma$ serves the
purpose (e.g., $\gamma = 10^{-6}$), since the smallest eigenvalue function in (\ref{eq:inner_max}) is a concave function of $t$.

In our experience, \texttt{eigopt} performs the computation of $\widetilde{\eta}^{\rm Herm}(R; B, {\rm i}\omega)$ significantly faster than \texttt{cvx}. The only downside is that an interval containing the optimal $t$ for (\ref{eq:inner_max})
must be supplied to \texttt{eigopt}, whereas such an interval is not needed by \texttt{cvx}.

\subsubsection{Outer Minimization Problems}\label{sec:small_scale_outer}
The minimum of $\widetilde{\eta}^{\rm Herm}(R; B, {\rm i}\omega)$ with respect to $\omega \in {\mathbb R}$ yields the
distance $r^{\rm Herm}(R; B)$, and the minimizing $\omega \in {\mathbb R}$ yields the point ${\rm i} \omega$
that first becomes an eigenvalue on the imaginary axis under the smallest perturbation possible.
This is a non-convex optimization problem, indeed the objective $\widetilde{\eta}^{\rm Herm}(R; B, {\rm i}\omega)$
may even blow up at some $\omega$.
%if $H_1({\rm i} \omega)$ is positive definite
%or negative definite (implying that the eigenvalue ${\rm i} \omega$ is not attainable via any  %Hermitian perturbation of the form $B\Delta B^H$).

We again resort to \texttt{eigopt} for the minimization of $\widetilde{\eta}^{\rm Herm}(R; B, {\rm i}\omega)$.
For the sake of completeness, a formal description is provided in Algorithm \ref{eigopt} below, where we use the abbreviations
\[
	\widetilde{\eta}_j \; := \; \widetilde{\eta}^{\rm Herm}(R; B, {\rm i}\omega_j)
		\quad	{\rm and}		\quad
	\widetilde{\eta}^{\;'}_j \; := \; \frac{d\, \widetilde{\eta}^{\rm Herm}(R; B, {\rm i}\omega_j)}{d \omega}.
\]
Introducing $t(\omega) := \argmax_{t \in {\mathbb R}} \: \lambda_{\min}(H_0({\rm i} \omega) + t H_1({\rm i} \omega))$,
the algorithm approximates the smallest eigenvalue function
\[
	\lambda_{\min}(H_0({\rm i} \omega) + t(\omega) H_1({\rm i} \omega))
			\; = \;
	\widetilde{\eta}^{\rm Herm}(R; B, {\rm i}\omega)
\]
with the piece-wise quadratic function
\begin{eqnarray*}
	Q_k(\omega) &:=&	\max \{  q_j(\omega) \; | \; j = 0, \dots, k \},	\\
			q_j(\omega) \; &:=& \; \widetilde{\eta}_j  +
							\widetilde{\eta}_j^{\;'} ( \omega - \omega_j )  +  (\gamma/2) ( \omega - \omega_j )^2
\end{eqnarray*}
at iteration $k$. It computes the global minimizer $\omega_{k+1}$ of $Q_k(\omega)$, and refines the piece-wise quadratic
function $Q_k(\omega)$ with the addition of one more quadratic piece, namely
$q_{k+1}(\omega) \; := \; \widetilde{\eta}_{k+1}  +
							\widetilde{\eta}_{k+1}^{\;'} ( \omega - \omega_{k+1} )  +  (\gamma/2) ( \omega - \omega_{k+1} )^2$.
Here, $\gamma$ is supposed to be a lower bound for the second derivative $\lambda_{\min}''(H_0({\rm i} \omega) + t(\omega) H_1({\rm i} \omega))$
for all $\omega$ sufficiently close to the global minimizer of $\widetilde{\eta}^{\rm Herm}(R; B, {\rm i}\omega)$.
In theory, it can be shown that for all $\gamma$ small enough, every convergent subsequence of the sequence
$\{ \omega_k\}$ converges to a global minimizer of $\widetilde{\eta}^{\rm Herm}(R; B, {\rm i}\omega)$.

At step $k$ of the algorithm $\widetilde{\eta}^{\rm Herm}(R; B, {\rm i}\omega)$ and its derivative need
to be computed at $\omega_{k+1}$. We rely on one of the two approaches (\texttt{cvx} or \texttt{eigopt})
described in Section \ref{sec:small_scale_inner} for the computation of $\widetilde{\eta}^{\rm Herm}(R; B, {\rm i}\omega_{k+1})$,
and employ a finite difference formula to approximate its derivative.

\begin{algorithm}
	\begin{algorithmic}[1]
		\REQUIRE{Matrices $B \in {\mathbb C}^{n\times m}$, $J, R, Q \in {\mathbb C}^{n\times n}$,
		a negative real number $\gamma$, and a closed interval $\widetilde \Omega \subseteq {\mathbb R}$
		that contains the global minimizer of $\widetilde{\eta}^{\rm Herm}(R; B, {\rm i}\omega)$ over
		$\omega \in {\mathbb R}$.}
		\ENSURE{The sequence $\{ \omega_k \}$.}
		\STATE{$\omega_0 \gets$ an initial point in $\widetilde \Omega$}
		\STATE Compute $\widetilde{\eta}_0 := \widetilde{\eta}^{\rm Herm}(R; B, {\rm i}\omega_0)$ and
					$\widetilde{\eta}_0^{\;'} := d\, \widetilde{\eta}^{\rm Herm}(R; B, {\rm i}\omega_0) / d\omega$
		% \STATE{Let $q_0(\omega) := \widetilde{\eta}_0  +
		%			\widetilde{\eta}_0^{\;'} ( \omega - \omega_0 )  +  (\gamma/2) ( \omega - \omega_0 )^2$}
		\FOR{$k=0,1,\dots$}
		\STATE{$Q_k(\omega)\gets\max\left\{q_j(\omega)\;|\; j=0,\dots,k\right\}$, where \\
				\hskip 22ex
					$q_j(\omega) \; := \; \widetilde{\eta}_j  +
							\widetilde{\eta}_j^{\;'} ( \omega - \omega_j )  +  (\gamma/2) ( \omega - \omega_j )^2$}
		\STATE{$\omega_{k+1} \gets \argmin_{\omega\in\widetilde\Omega} Q_k(\omega)$}
		\STATE Compute
\begin{eqnarray*}\widetilde{\eta}_{k+1} &:=& \widetilde{\eta}^{\rm Herm}(R; B, {\rm i}\omega_{k+1}),\\
					\widetilde{\eta}_{k+1}^{\;'} &:=& d\, \widetilde{\eta}^{\rm Herm}(R; B, {\rm i}\omega_{k+1}) / d\omega
\end{eqnarray*}
		\ENDFOR
	\end{algorithmic}
	\caption{Small-Scale Computation of $r^{\rm Herm}(R; B)$.}
	\label{eigopt}
\end{algorithm}

\subsection{Large-Scale Problems}\label{sec:large_scale_st}
The characterization via eigenvalue optimization in Theorem \ref{thm_str_Herm} is in terms of
the matrix-valued functions $H_0({\rm i} \omega), H_1({\rm i} \omega)$, which are of small size
 provided that $B$ has few columns. The large-scale nature of this characterization is hidden
 in the matrix-valued function $W({\rm i} \omega) := (J-R)Q - {\rm i} \omega I$ defined in Theorem~\ref{thm_str_Herm}. Note that, in particular, both $H_0({\rm i} \omega)$
 and $H_1({\rm i} \omega)$ are defined in terms of $W({\rm i} \omega)^{-1} B$.
 This is also reflected in Algorithm \ref{eigopt} when $J, R, Q$ are large;
 at iteration $k$ of the algorithm, the matrices $H_0({\rm i} \omega_{k+1})$, $H_1({\rm i} \omega_{k+1})$
need to be formed for the computation of $\widetilde{\eta}^{\rm Herm}(R; B, {\rm i}\omega_{k+1})$,
which in turn requires the solution of the linear system $W({\rm i} \omega_{k+1}) Z = B$.

To cope with the large-scale setting, we benefit from structure preserving two-sided projections
similar to those described in Section \ref{sec:sf_PH}. In particular, for a given subspace ${\mathcal V}_k$
and a matrix $V_k$ whose columns form an orthonormal basis for ${\mathcal V}_k$, we set
\begin{equation}\label{eq:defn_Wr}
		W_k	\;\;	:=	\;\;	QV_k (V^H_k Q V_k)^{-1}.
\end{equation}
%
%and ${\mathcal W}_r$ the subspace spanned by the columns of $W_r$.
Furthermore, we define the projected matrices
\begin{equation}\label{eq:projected_matrices}
		\begin{split}
			J_k   \;\; & := \;\; W_k^H J W_k,	\;\;		R_k \;\; := \;\; W_k^H R W_k,	\\
			Q_k  \;\; & := \;\; V_k^H Q V_k,		\;\;\;\:		B_k \;\; := \;\; W_k^H B.
		\end{split}
\end{equation}
Recall also the identities
\begin{equation}\label{eq:identities}
		W_k^H V_k	=	I		\quad	{\rm and}		\quad	(W_k V_k^H)^2	=	W_k V_k^H,
\end{equation}
the latter of which means  that $W_k V_k^H$ is an oblique projector onto ${\rm Im}(Q V_k)$.

Although these identities are still available, however, we no longer have a tool such as
Theorem~\ref{thm:Gal_interpolate} that we could depend on to establish interpolation results.
This is because there is no apparent transfer function, as there is indeed no apparent linear port-Hamiltonian
system that can be tied to the eigenvalue optimization characterization. But the following simple observation turns out to be very useful.
\begin{lemma}\label{thm:reduced_Wlambda}
Consider a DH model~(\ref{DH}) and a reduced model $\dot x_k= (J_k-R_k)Q_k x_k$ with coefficients as in (\ref{eq:projected_matrices}).
With $W(\lambda) = (J - R) Q  -  \lambda I$ and $W_k(\lambda) := (J_k - R_k) Q_k  -  \lambda I$, we then have
\[
	W_k(\lambda)	\;\;	=	\;\;  W_k^H \: W(\lambda) \: V_k	\quad\quad	\mbox{\rm for all}\ \lambda \in {\mathbb C}.
\]
\end{lemma}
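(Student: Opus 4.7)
The plan is to unfold the definition of $W_k(\lambda)$, then use the identity $W_k^H V_k = I$ to handle the $\lambda I$ term, and finally use the definition of $W_k = QV_k(V_k^H Q V_k)^{-1}$ to collapse the product $W_k V_k^H Q V_k$ back to $Q V_k$.

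First I would write
\[
    W_k(\lambda) \;=\; (J_k - R_k) Q_k - \lambda I \;=\; W_k^H (J - R) W_k \cdot V_k^H Q V_k - \lambda W_k^H V_k,
\]
where in the last term I used the identity $W_k^H V_k = I$ from (\ref{eq:identities}) to rewrite $\lambda I$. Next I would observe the key factorization
\[
    W_k V_k^H Q V_k \;=\; Q V_k (V_k^H Q V_k)^{-1} V_k^H Q V_k \;=\; Q V_k,
\]
which follows immediately from the definition (\ref{eq:defn_Wr}) of $W_k$. Substituting this into the previous expression gives
\[
    W_k(\lambda) \;=\; W_k^H (J - R) Q V_k - \lambda W_k^H V_k \;=\; W_k^H \bigl[(J - R) Q - \lambda I\bigr] V_k \;=\; W_k^H W(\lambda) V_k,
\]
which is the claim.

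There is no real obstacle here; the lemma is an entirely algebraic consequence of the two identities in (\ref{eq:identities}) together with the explicit form of $W_k$. The only subtlety worth flagging is that although the identity $W_k V_k^H Q V_k = Q V_k$ looks similar to the oblique projector identity $(W_k V_k^H)^2 = W_k V_k^H$, it is a slightly different (and, for our purposes, more useful) manifestation of the definition of $W_k$, so I would state it as a short displayed equation rather than invoke the projector identity.
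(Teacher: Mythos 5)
Your proof is correct and follows essentially the same route as the paper: unfold $(J_k-R_k)Q_k$, replace $\lambda I$ by $\lambda W_k^H V_k$, and collapse $W_k V_k^H Q V_k$ to $Q V_k$ via the explicit form of $W_k$. Your remark that the key collapse follows directly from the definition of $W_k$ (rather than from the projector identity) is a fair and slightly more explicit reading of the step the paper attributes generically to the identities in (\ref{eq:identities}).
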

\begin{proof}
 From the definition of $J_k, R_k, Q_k$ in (\ref{eq:projected_matrices}) we obtain
\[
	\begin{split}
	W_k(\lambda) 	& \;\; = \;\; (W_k^H (J - R) W_k) V_k^H Q V_k	-	\lambda I		\\
				& \;\; = \;\;	W_k^H (J - R)  Q V_k	-	\lambda W_k^H V_k	\\
				& \;\; = \;\;	W_k^H \left\{ (J-R) Q  -  \lambda I  \right\}  V_k  \;\; = \;\;  W_k^H W(\lambda) V_k,		
	\end{split}
\]
where we have employed  the identities in (\ref{eq:identities}).
\end{proof}

Our reduced problems are expressed in terms of the reduced versions of $H_0(\lambda)$, $L(\lambda)$, and $H_1(\lambda)$ defined via
\[
	H_{k,0}(\lambda) := L_k(\lambda)^{-1} L_k(\lambda)^{-H},
\]
with $L_k(\lambda)$ denoting a lower triangular Cholesky factor of
\[
	\widetilde{H}_{k,0}(\lambda) := B^H_k W_k(\lambda)^{-H} Q_k B_k B^H_k Q_k W_k(\lambda)^{-1} B_k,
\]
and $H_{k,1}(\lambda) := \ri (\widetilde{H}_{k,1}(\lambda)  -  \widetilde{H}_{k,1}(\lambda)^H)$ with
\[
	\widetilde{H}_{k,1}(\lambda) := L_k(\lambda)^{-1} B^H_k W_k(\lambda)^{-H} Q_k B_k L_k(\lambda)^{-H}.
\]
Note that to ensure the uniqueness of $L(\lambda)$ and $L_k(\lambda)$, we define them as the Cholesky factors of
$\widetilde{H}_{0}(\lambda)$ and $\widetilde{H}_{k,0}(\lambda)$ with real and positive entries along the diagonal.
Our goal is to come up with reduced counterparts of $\widetilde{\eta}^{\rm Herm}(R; B, {\rm i}\omega)$,
$\eta^{\rm Herm}(R; B, {\rm i}\omega)$ that Hermite-interpolate the full functions at prescribed points.
For a given subspace ${\mathcal V}_k$, a matrix $V_k$ whose columns form an orthonormal
basis for ${\mathcal V_k}$, and for $W_k$ as in (\ref{eq:defn_Wr}), we introduce
\[
	\begin{split}
			\widetilde{\eta}^{\rm Herm}_k(R; B, {\rm i}\omega)
			 \;	& :=	\;
			\sup_{t \in {\mathbb R}} \: \lambda_{\min} (H_{k,0}({\rm i} \omega) + t H_{k,1} ({\rm i} \omega))	 \\
			\eta^{\rm Herm}_k(R; B, {\rm i}\omega)
			\; 	& := 	\;	
	 	\inf\{ \|\Delta \|_2 \; | \; \Delta = \Delta^H, \;\;	 {\rm i} \omega \in \Lambda \big( (J_k - R_k)Q_k - (B_k\Delta B^{H}_k)Q_k \big) \}.
	\end{split}
\]
Recall that $\eta^{\rm Herm}(R; B, {\rm i}\omega) = \widetilde{\eta}^{\rm Herm}(R; B, {\rm i}\omega)^{1/2}$, and a similar relation holds
for the reduced problems, i.e., $\eta^{\rm Herm}_k(R; B, {\rm i}\omega) = \widetilde{\eta}^{\rm Herm}_k(R; B, {\rm i}\omega)^{1/2}$.

We start our analysis by establishing that the quantities $\eta^{\rm Herm}_k(R; B, {\rm i}\omega)$,
$\widetilde{\eta}^{\rm Herm}_k(R; B, {\rm i}\omega)$ are independent of the choice of basis for the subspace
${\mathcal V}_k$. For this proof we introduce the notation $\eta^{\rm Herm}_{V_k,W_k}(R; B, {\rm i}\omega))$,
$\widetilde{\eta}^{\rm Herm}_{V_k,W_k}(R; B, {\rm i}\omega)$
%which are defined as $\eta^{\rm Herm}_{r}(R; B, {\rm i}\omega)$, $\widetilde{\eta}^{\rm %Herm}_{r}(R; B, {\rm i}\omega)$ but
to emphasize the particular choices of basis $V_k, W_k$ for the  subspaces ${\mathcal V_k}, {\mathcal W}_k$ used in the
definitions of $\eta^{\rm Herm}_k(R; B, {\rm i}\omega), \widetilde{\eta}^{\rm Herm}_k(R; B, {\rm i}\omega)$.
Similarly, we indicate the spaces and the bases with the help of the notations $J_{W_k}$, $R_{W_k}$, $Q_{V_k}$, and $ B_{W_k}$.
%are defined as $J_k, R_k, Q_k, B_k$ in (\ref{eq:projected_matrices})
%but with the particular choices $V_k, W_k$ for the matrices whose columns form bases for %${\mathcal V_k}$, ${\mathcal W}_k$.
%We will not need this notation elsewhere after this result, as it turns out it does not matter %which bases we use in the
%definitions of $\eta^{\rm Herm}_k(R; B, {\rm i}\omega)$, $\widetilde{\eta}^{\rm Herm}_k(R; B, %{\rm i}\omega)$.
%
\begin{lemma}\label{thm:eta_basis_independent}
Let the columns of $V_k$ and $\widetilde{V}_k$ form orthonormal bases for the subspace ${\mathcal V}_k$, and let
$W_k := Q V_k (V_k^H Q V_k)^{-1}$, $\widetilde{W}_k := Q \widetilde{V}_k (\widetilde{V}_k^H Q \widetilde{V}_k)^{-1}$.
Then
\[
	\begin{split}
	\eta^{\rm Herm}_{V_k, W_k}(R; B, {\rm i}\omega)	&  = 	\eta^{\rm Herm}_{\widetilde{V}_k, \widetilde{W}_k}(R; B, {\rm i}\omega), \\
	\widetilde{\eta}^{\rm Herm}_{V_k, W_k}(R; B, {\rm i}\omega)	&  = 	\widetilde{\eta}^{\rm Herm}_{\widetilde{V}_k, \widetilde{W}_k}(R; B, {\rm i}\omega)
	\end{split}
\]
for all $\omega \in {\mathbb R}$.
\end{lemma}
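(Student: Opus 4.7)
The plan is to exploit the fact that any two orthonormal bases of the same subspace $\mathcal{V}_k$ differ by a unitary change of coordinates, and then track how every object in the definitions of $\eta^{\rm Herm}_k$ and $\widetilde\eta^{\rm Herm}_k$ transforms under this change. Concretely, since $V_k$ and $\widetilde V_k$ both have orthonormal columns spanning $\mathcal V_k$, there exists a unitary $U$ with $\widetilde V_k = V_k U$. Plugging this into the formula defining $\widetilde W_k$ in \eqref{eq:defn_Wr} and using $U^H U = I$ I obtain, in one line, $\widetilde W_k = Q V_k U (U^H V_k^H Q V_k U)^{-1} = W_k U$.

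Next I would substitute $\widetilde V_k = V_k U$ and $\widetilde W_k = W_k U$ into \eqref{eq:projected_matrices} to show that the reduced coefficients transform by unitary similarity:
\[
\widetilde J_k = U^H J_k U,\quad \widetilde R_k = U^H R_k U,\quad \widetilde Q_k = U^H Q_k U,\quad \widetilde B_k = U^H B_k.
\]
Lemma~\ref{thm:reduced_Wlambda} (or a direct calculation) then gives $\widetilde W_k(\lambda) = U^H W_k(\lambda) U$. The identity $\eta^{\rm Herm}_{V_k,W_k} = \eta^{\rm Herm}_{\widetilde V_k,\widetilde W_k}$ follows immediately: the matrix appearing inside the spectrum in the definition of $\eta^{\rm Herm}_k$ becomes
\[
(\widetilde J_k-\widetilde R_k)\widetilde Q_k - (\widetilde B_k\Delta \widetilde B_k^H)\widetilde Q_k
= U^H\bigl[(J_k-R_k)Q_k - (B_k\Delta B_k^H)Q_k\bigr]U,
\]
so the spectrum is the same set for the same Hermitian $\Delta$, and hence the constrained infimum over $\|\Delta\|_2$ is unchanged.

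For $\widetilde\eta^{\rm Herm}_k$ the key computation is to verify that the small Hermitian matrices $H_{k,0}(\lambda)$ and $H_{k,1}(\lambda)$ are themselves basis-independent. Substituting the transformed quantities into the definition of $\widetilde H_{k,0}(\lambda)$, every factor of $U$ is paired with a factor of $U^H$ so that all of them cancel, giving $\widetilde H_{\widetilde V_k,\widetilde W_k,0}(\lambda) = \widetilde H_{V_k,W_k,0}(\lambda)$. Because the Cholesky factor with real positive diagonal is unique, $L_k(\lambda)$ is unambiguously the same for both bases, and hence so is $H_{k,0}(\lambda) = L_k(\lambda)^{-1}L_k(\lambda)^{-H}$. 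The analogous cancellation handles $\widetilde H_{k,1}(\lambda)$, and therefore $H_{k,1}(\lambda)$. Since the inner supremum in the definition of $\widetilde\eta^{\rm Herm}_k$ depends only on $H_{k,0}({\rm i}\omega)$ and $H_{k,1}({\rm i}\omega)$, the two values agree.

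The only mild subtlety I anticipate is the uniqueness of the Cholesky factor: a priori $\widetilde H_{k,0}(\lambda)$ could be singular, in which case one has to argue via a pseudo-Cholesky or invoke that $H_{k,0}$ is defined only when $\widetilde H_{k,0}$ is positive definite (as is implicit in Theorem~\ref{thm_str_Herm}, after full-column-rank assumptions on the relevant factors). Apart from that, the proof is a direct book-keeping exercise: introduce $U$ with $\widetilde V_k = V_k U$, derive $\widetilde W_k = W_k U$, note the unitary similarity of all reduced matrices and of $W_k(\lambda)$, and then observe that both the spectrum-based quantity $\eta^{\rm Herm}_k$ and the $H_0,H_1$-based quantity $\widetilde\eta^{\rm Herm}_k$ are invariants of this similarity.
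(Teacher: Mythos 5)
Your proof is correct and follows essentially the same route as the paper's: both introduce a unitary matrix (your $U$, the paper's $P$) with $\widetilde V_k = V_k U$, deduce $\widetilde W_k = W_k U$, and conclude that the reduced pencil $(J_k-R_k)Q_k - (B_k\Delta B_k^H)Q_k - {\rm i}\omega I$ transforms by unitary similarity, so its spectrum --- and hence $\eta^{\rm Herm}_k$ --- is basis-independent. The only difference is in the second claim: the paper obtains the $\widetilde\eta$ equality from the $\eta$ equality "immediately" via the relation $\widetilde\eta^{\rm Herm}_k = (\eta^{\rm Herm}_k)^2$, whereas you verify directly that all factors of $U$ cancel in $\widetilde H_{k,0}$ and $\widetilde H_{k,1}$ so that $H_{k,0}$ and $H_{k,1}$ are literally unchanged; this is a self-contained (and slightly longer) way to reach the same conclusion, and your caveat about uniqueness of the Cholesky factor is a reasonable one that the paper handles by fiat.
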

\begin{proof}
It suffices to prove
$\eta^{\rm Herm}_{V_k, W_k}(R; B, {\rm i}\omega)	  = 	\eta^{\rm Herm}_{\widetilde{V}_k, \widetilde{W}_k}(R; B, {\rm i}\omega)$,
the other equality follows from this equality immediately.
Since the columns of $V_k$ and $\widetilde V_k$ form bases for the same space, there exists a unitary matrix $P$ such that $\widetilde{V}_k = V_k P$. Furthermore,
by definition,
\[
	\widetilde{W}_k  =  Q (V_k P) (P^H V_k^H Q V_k P)^{-1}	=	Q V_k P P^H (V_k^H Q V_k)^{-1} P	=	W_k P.
\]
The assertion then follows from the following set of equivalences:
\[
	\begin{split}
		{\rm i} \omega \in \Lambda \big( (J_{W_k} - R_{W_k}) Q_{V_k} - (B_{W_k} \Delta B^H_{W_k})Q_{V_k} \big)
			\;\;	\Longleftrightarrow	\;\;		\\
{\rm det}(W_k^H (J - R)Q V_k  - W_k^H B \Delta B^H Q V_k  - {\rm i} \omega W_k^H V_k	) = 0 \;\;	\Longleftrightarrow	\;\;		\\{\rm det}(W_k^H \left( (J - R)Q  -  B \Delta B^H Q  - {\rm i} \omega I \right) V_k	) = 0 \;\;	\Longleftrightarrow	\;\;		\\
{\rm det}(P^H W_k^H \left( (J - R)Q  -  B \Delta B^H Q  - {\rm i} \omega I \right) V_k P ) = 0 \;\;	\Longleftrightarrow	\;\;		\\
{\rm det}(\widetilde{W}_k^H (J - R)Q \widetilde{V}_k  - \widetilde{W}_k^H B \Delta B^H Q \widetilde{V}_k  - {\rm i} \omega \widetilde{W}_k^H \widetilde{V}_k	) = 0 \;\;	\Longleftrightarrow	\;\;		\\
		{\rm i} \omega \in \Lambda \big( (J_{\widetilde{W}_k} - R_{\widetilde{W}_k}) Q_{\widetilde{V}_k} - (B_{\widetilde{W}_k} \Delta B^H_{\widetilde{W}_k})Q_{\widetilde{V}_k} \big).	\quad\quad\quad
	\end{split}
\]
\end{proof}
In the following we will develop a subspace framework including an Hermite interpolation property for DH systems. For this we first show an auxiliary interpolation result
% for the main Hermite interpolation result.
for $B^{H} Q W(\lambda)^{-1} B$, the matrix through which $H_0(\lambda), H_1(\lambda)$ are defined.
\begin{theorem}\label{thm:interpolate_matrixfuns}
Consider a DH model~(\ref{DH}) and a reduced model $\dot x_k= (J_k-R_k)Q_k x_k$ with coefficients as in (\ref{eq:projected_matrices}), and let $W(\lambda) = (J - R) Q  -  \lambda I$ and $W_k(\lambda) := (J_k - R_k) Q_k  -  \lambda I$.
For a given $\widehat{\lambda} \in {\mathbb C}$ such that $W(\widehat{\lambda})$ and $W_k(\widehat{\lambda})$
are invertible, the following assertions hold:
\begin{enumerate}
	\item[\bf (i)] If $\: {\rm Im}(W(\widehat{\lambda})^{-1} B) \subseteq {\mathcal V}_k$, then
				$B^H Q W(\widehat{\lambda})^{-1} B \; = \; B^H_k Q_k W_k(\widehat{\lambda})^{-1} B_k$.
	\item[\bf (ii)] Additionally, if $\: {\rm Im}(W(\widehat{\lambda})^{-2} B) \subseteq {\mathcal V}_k$ and
	the orthonormal basis $V_k$ for ${\mathcal V}_k$ is such that
	$\:
			V_k
				=
			\left[
				\begin{array}{cc}
					\widetilde{V}_k	&	\widehat{V}_k
				\end{array}
			\right]
	$	
	where the columns of $\:\: \widetilde{V}_k$ form an orthonormal basis for ${\rm Im}(W(\widehat{\lambda})^{-1} B)$, then
	$\: B^H Q W(\widehat{\lambda})^{-2} B \; = \;$ $B^H_k Q_k W_k(\widehat{\lambda})^{-2} B_k$.
\end{enumerate}
\end{theorem}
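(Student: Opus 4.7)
The plan is to rely on the compact identity $W_k(\lambda) = W_k^H W(\lambda) V_k$ provided by Lemma~\ref{thm:reduced_Wlambda}, together with the projector-style relations $W_k^H V_k = I$, $Q V_k = W_k Q_k$ (which rewrites the definition of $W_k$), and $B_k^H = B^H W_k$. The strategy is to exploit each containment hypothesis by writing the corresponding full-order object in the form $V_k$ times a reduced-order matrix, then to identify that reduced matrix by multiplying on the left by $W_k^H W(\widehat{\lambda})$ and invoking the reduced pencil identity.

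For part (i), the containment $\text{Im}(W(\widehat{\lambda})^{-1} B) \subseteq \mathcal{V}_k$ allows me to write $W(\widehat{\lambda})^{-1} B = V_k Z$ for some matrix $Z$. Multiplying on the left by $W_k^H W(\widehat{\lambda})$ and using Lemma~\ref{thm:reduced_Wlambda} gives $W_k(\widehat{\lambda}) Z = B_k$, so by the hypothesized invertibility of $W_k(\widehat{\lambda})$ one obtains $Z = W_k(\widehat{\lambda})^{-1} B_k$. The claimed identity then follows by substituting $W(\widehat{\lambda})^{-1} B = V_k W_k(\widehat{\lambda})^{-1} B_k$ into $B^H Q W(\widehat{\lambda})^{-1} B$ and collapsing $Q V_k$ into $W_k Q_k$ and $B^H W_k$ into $B_k^H$.

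Part (ii) will repeat this template one level up. The block structure $V_k = [\,\widetilde{V}_k\ \widehat{V}_k\,]$ already forces $\text{Im}(W(\widehat{\lambda})^{-1} B) \subseteq \mathcal{V}_k$, so part (i), and crucially its intermediate identity $W(\widehat{\lambda})^{-1} B = V_k W_k(\widehat{\lambda})^{-1} B_k$, are available. Writing $W(\widehat{\lambda})^{-2} B = V_k Z_2$ and again multiplying by $W_k^H W(\widehat{\lambda})$, Lemma~\ref{thm:reduced_Wlambda} yields $W_k(\widehat{\lambda}) Z_2 = W_k^H W(\widehat{\lambda})^{-1} B$; the right-hand side simplifies to $W_k^H V_k W_k(\widehat{\lambda})^{-1} B_k = W_k(\widehat{\lambda})^{-1} B_k$ by the intermediate identity and $W_k^H V_k = I$. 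Hence $Z_2 = W_k(\widehat{\lambda})^{-2} B_k$, and the desired equality follows by the same substitution as in (i).

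The main subtlety, rather than a genuine obstacle, is the sequential nature of (ii): one cannot directly simplify $W_k^H W(\widehat{\lambda})^{-1} B$ without first having established the stronger structural identity for $W(\widehat{\lambda})^{-1} B$ that part (i) provides, so (i) and (ii) are chained rather than parallel. The explicit block partitioning of $V_k$ in the hypothesis of (ii) is essentially a convenient way of encoding the inclusion $\text{Im}(W(\widehat{\lambda})^{-1} B) \subseteq \mathcal{V}_k$ alongside $\text{Im}(W(\widehat{\lambda})^{-2} B) \subseteq \mathcal{V}_k$; by Lemma~\ref{thm:eta_basis_independent} the quantities of interest are invariant under a change of orthonormal basis for $\mathcal{V}_k$, so singling out this particular partition entails no loss of generality.
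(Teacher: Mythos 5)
Your proof is correct, and for part (ii) it takes a genuinely cleaner route than the paper. For part (i) you do essentially what the paper does: write $W(\widehat{\lambda})^{-1}B = V_k Z$, hit it with $W_k^H W(\widehat{\lambda})$, invoke Lemma~\ref{thm:reduced_Wlambda} to get $Z = W_k(\widehat{\lambda})^{-1}B_k$, and collapse $B^H Q V_k$ into $B_k^H Q_k$ via $QV_k = W_k Q_k$. For part (ii), however, the paper factors $W(\widehat{\lambda})^{-2}B$ through the intermediate basis $\widetilde{V}_k$, writing $V_k^H W(\widehat{\lambda})^{-2}B = \bigl(V_k^H W(\widehat{\lambda})^{-1}\widetilde{V}_k\bigr)\bigl(\widetilde{V}_k^H W(\widehat{\lambda})^{-1}B\bigr)$ and tracking the block structure via the matrix $I_{\widetilde{k},m}$ and the vanishing of $\widehat{V}_k^H W(\widehat{\lambda})^{-1}B$. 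You instead lift $W(\widehat{\lambda})^{-2}B = V_k Z_2$ directly, apply $W_k^H W(\widehat{\lambda})$ to get $W_k(\widehat{\lambda})Z_2 = W_k^H W(\widehat{\lambda})^{-1}B$, and then feed in the intermediate identity $W(\widehat{\lambda})^{-1}B = V_k W_k(\widehat{\lambda})^{-1}B_k$ from part (i) together with $W_k^H V_k = I$ to conclude $Z_2 = W_k(\widehat{\lambda})^{-2}B_k$. This avoids the block decomposition entirely and, as you observe, shows that the partition hypothesis on $V_k$ in (ii) is only a convenient encoding of the two image inclusions rather than a genuine requirement; the paper's hypothesis is stronger than your argument needs. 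The only thing your write-up elides is that applying part (i) inside (ii) requires ${\rm Im}(W(\widehat{\lambda})^{-1}B)\subseteq{\mathcal V}_k$, which you correctly note is forced by the presence of $\widetilde{V}_k$ among the columns of $V_k$; with that noted, the chain of implications is complete.
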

\begin{proof}
\textbf{(i)} If ${\rm Im}(W(\widehat{\lambda})^{-1} B) \subseteq {\mathcal V}_k$ then, since
$W_k V_k^H$ is a projector onto ${\rm Im}(Q V_k)$, we obtain
\[
	\begin{split}
	B^H Q W(\widehat{\lambda})^{-1} B
		\; = \;
	B^H Q V_k V_k^H W(\widehat{\lambda})^{-1} B
		& \; = \;
	B^H W_k V_k^H Q V_k V_k^H W(\widehat{\lambda})^{-1} B	\\
		& \; = \;
	B^H_k Q_k V_k^H W(\widehat{\lambda})^{-1} B.
	\end{split}
\]
To show that
	$V_k^H W(\widehat{\lambda})^{-1} B	=	W_k(\widehat{\lambda})^{-1} B_k$, let $Z := W(\widehat{\lambda})^{-1} B$, and $Z_k$ be such that $V_k Z_k = Z$. (There exists a unique $Z_k$ with this property, because ${\rm Im}(Z) \subseteq {\mathcal V}_k$.) Then
$W(\widehat{\lambda}) Z = B$ implies that $	W(\widehat{\lambda}) V_k Z_k = B$, and thus $	W^H_k W(\widehat{\lambda}) V_k Z_k = W^H_k B$. Hence,  by Lemma~\ref{thm:reduced_Wlambda} we see that $Z_k  = W_k(\widehat{\lambda})^{-1} B_k$,
implying that
\[
	V_k^{H} W(\widehat{\lambda})^{-1} B	\;	=	\;
	V_k^{H} Z	\;	=	\;
	V_k^{H} (V_k Z_k)	\;	=	\;
	W_k(\widehat{\lambda})^{-1} B_k,
\]
as asserted.

\textbf{(ii)} Following the steps at the beginning of the proof of part (i), we have
\[
	B^H Q W(\widehat{\lambda})^{-2} B	\;\; = \;\;	B^H_k Q_k V_k^H W(\widehat{\lambda})^{-2} B.
\]
To show that $V_k^H W(\widehat{\lambda})^{-2} B = W_k(\widehat{\lambda})^{-2} B_k$, we exploit that
\begin{equation}\label{eq:intermed_secpow_interpolate}
	V_k^H W(\widehat{\lambda})^{-2} B	\;\; = \;\;	
		(V_k^H W(\widehat{\lambda})^{-1} 	\widetilde{V}_k) (\widetilde{V}_k^H W(\widehat{\lambda})^{-1}B).
\end{equation}
Now define $Z := W(\widehat{\lambda})^{-1} \widetilde{V}_k$ and $Z_k$ such that $V_k Z_k = Z$
(once again such a $Z_k$ exists uniquely, because ${\rm Im}(Z) \subseteq {\mathcal V}_k$) so that $W(\widehat{\lambda}) Z = \widetilde{V}_k$. Then $
	W(\widehat{\lambda}) V_k Z_k = \widetilde{V}_k$ and hence $	W^H_k W(\widehat{\lambda}) V_k Z_k = I_{\widetilde{k},m}$,
where $I_{\widetilde{k},m}$ is the matrix consisting of the first  $m$ columns of the $\widetilde{k}\times \widetilde{k}$ identity matrix 
with $\widetilde{k} := {\rm dim} \: {\mathcal V}_k > m$. This implies that
$Z_k = W_k(\widehat{\lambda})^{-1} I_{\widetilde{k},m}$, so the following can be deduced about
the term inside the first parenthesis on the right-hand side of (\ref{eq:intermed_secpow_interpolate}):
\[
	V_k^H W(\widehat{\lambda})^{-1} 	\widetilde{V}_k
			\; = \;
	V_k^H Z
			\; = \;
	V_k^H (V_k Z_k)
			\; = \;
	W_k(\widehat{\lambda})^{-1} I_{\widetilde{k},m}.
\]
As for the term inside the second parenthesis on the right-hand side of (\ref{eq:intermed_secpow_interpolate}),
we make use of the following observation:
\begin{equation}\label{eq:intermed2_secpow_interpolate}
	V_k^H W(\widehat{\lambda})^{-1}B
			\; = \;
	\left[
		\begin{array}{c}
			\widetilde{V}_k^H 	\\
			\widehat{V}_k^H
		\end{array}
	\right]
	W(\widehat{\lambda})^{-1}B
			\; = \;
	\left[
		\begin{array}{c}
		\widetilde{V}_k^H W(\widehat{\lambda})^{-1}B	\\
		0
		\end{array}
	\right],
\end{equation}
where the last equality follows, since the columns of $\widetilde{V}_k$ form an orthonormal basis
for ${\rm Im}(W(\widehat{\lambda})^{-1}B)$. Putting these observations together
in (\ref{eq:intermed_secpow_interpolate}), we obtain
\[
	\begin{split}
	V_k^H W_k (\widehat{\lambda})^{-2}B_k \; & = \;
	(V_k^H W(\widehat{\lambda})^{-1} 	\widetilde{V}_k) (\widetilde{V}_k^H W(\widehat{\lambda})^{-1}B)	\\
	\; & = \;	
	W_k(\widehat{\lambda})^{-1} I_{\widetilde{k},m} (\widetilde{V}_k^H W(\widehat{\lambda})^{-1}B)	\\
	\; & = \;
	W_k(\widehat{\lambda})^{-1} I_{\widetilde{k}} (V_k^H W(\widehat{\lambda})^{-1}B)	\\
	\; & = \;
	W_k(\widehat{\lambda})^{-1} I_{\widetilde{k}} ( W_k (\widehat{\lambda})^{-1}B_k )	
	\; = \;	 W_k (\widehat{\lambda})^{-2}B_k,
	\end{split}
\]
where in the third equality we exploit (\ref{eq:intermed2_secpow_interpolate}), and in the fourth
equality we employ that $V_k^H W(\widehat{\lambda})^{-1}B = W_k (\widehat{\lambda})^{-1}B_k$ which is proven in part (i).
\end{proof}

After these preparations we can prove our main interpolation result.
\begin{theorem}\label{thm:main_Hermite_interpolate}
Consider a DH model~(\ref{DH}) and a reduced model $\dot x_k= (J_k-R_k)Q_k x_k$ with coefficients as in (\ref{eq:projected_matrices}), and let $W(\lambda) = (J - R) Q  -  \lambda I$ and $W_k(\lambda) := (J_k - R_k) Q_k  -  \lambda I$. Suppose that the subspace ${\mathcal V}_k$ is such that
\begin{equation}\label{eq:subspace_inclusion}
	{\rm Im}( W({\rm i} \widehat{\omega})^{-1} B ), {\rm Im}( W({\rm i} \widehat{\omega})^{-2} B ) \subseteq {\mathcal V}_k,
\end{equation}
and $W_k$ is defined as in (\ref{eq:defn_Wr}) in terms of a matrix $V_k$ whose columns form an orthonormal
basis for ${\mathcal V}_k$.
\begin{enumerate}
\item[\bf (i)]
The quantity $\widetilde{\eta}^{\rm Herm}(R; B, {\rm i} \widehat{\omega})$ is finite if and only if
$\widetilde{\eta}^{\rm Herm}_k(R; B, {\rm i} \widehat{\omega})$ is finite. If $\widetilde{\eta}^{\rm Herm}(R; B, {\rm i} \widehat{\omega})$
is finite, then
\begin{equation}\label{eq:main_interpolation_kes}
	\widetilde{\eta}^{\rm Herm}(R; B, {\rm i} \widehat{\omega})
						\; = \;
					\widetilde{\eta}^{\rm Herm}_k(R; B, {\rm i} \widehat{\omega}).
\end{equation}
\item[\bf (ii)]
Moreover, if $\widetilde{\eta}^{\rm Herm}(R; B, {\rm i} \omega)$ and $\widetilde{\eta}^{\rm Herm}_k(R; B, {\rm i} \omega)$
are differentiable at $\widehat{\omega}$, then we have
\begin{equation}\label{eq:main_interpolation_kes2}
	\frac{d\, \widetilde{\eta}^{\rm Herm}(R; B, {\rm i} \widehat{\omega})}{ d \omega}
						\; = \;
					\frac{d\, \widetilde{\eta}^{\rm Herm}_k(R; B, {\rm i} \widehat{\omega})}{d \omega}.
\end{equation}
\end{enumerate}
\end{theorem}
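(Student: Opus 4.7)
The central observation is that $H_0(\lambda)$, $H_1(\lambda)$, together with the inner optimization in $\widetilde{\eta}^{\rm Herm}$, depend on $W(\lambda)^{-1}B$ only through the compressed matrix $M(\lambda) := B^H Q W(\lambda)^{-1} B$. Indeed, $\widetilde{H}_0(\lambda) = M(\lambda)^H M(\lambda)$ (using $Q^H = Q$), so $L(\lambda)$ is the unique lower-triangular Cholesky factor of $M^H M$ with positive diagonal, $H_0 = L^{-H} L^{-1} = (M^H M)^{-1}$, and $\widetilde{H}_1 = L^{-1} M^H L^{-H}$. The reduced quantities $H_{k,0}, H_{k,1}$ are built from $M_k(\lambda) := B_k^H Q_k W_k(\lambda)^{-1} B_k$ in exactly the same way. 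Hence it suffices to establish $M(\ri\widehat{\omega}) = M_k(\ri\widehat{\omega})$ for part (i), and in addition $(dM/d\omega)(\ri\widehat{\omega}) = (dM_k/d\omega)(\ri\widehat{\omega})$ for part (ii).

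\textbf{Part (i).} By Lemma \ref{thm:eta_basis_independent}, $\widetilde{\eta}^{\rm Herm}_k(R;B,\ri\widehat{\omega})$ is independent of the particular orthonormal basis chosen for ${\mathcal V}_k$. Choose $V_k = [\widetilde{V}_k\ \widehat{V}_k]$, where $\widetilde{V}_k$ is an orthonormal basis of ${\rm Im}(W(\ri\widehat{\omega})^{-1} B)$ (possible by the first inclusion in (\ref{eq:subspace_inclusion})) and $\widehat{V}_k$ completes it to an orthonormal basis of ${\mathcal V}_k$. Theorem~\ref{thm:interpolate_matrixfuns}(i) then yields $M(\ri\widehat{\omega}) = M_k(\ri\widehat{\omega})$. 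Consequently $\widetilde{H}_0$, its Cholesky factor $L$, $\widetilde{H}_1$, $H_0$, and $H_1$ all coincide with their reduced counterparts at $\ri\widehat{\omega}$, so $\lambda_{\min}(H_0(\ri\widehat{\omega}) + t H_1(\ri\widehat{\omega})) = \lambda_{\min}(H_{k,0}(\ri\widehat{\omega}) + t H_{k,1}(\ri\widehat{\omega}))$ for every $t \in \mathbb R$. Taking suprema gives (\ref{eq:main_interpolation_kes}). The finiteness equivalence follows from the criterion stated in Theorem~\ref{thm_str_Herm}: the inner supremum is attained iff $H_1(\ri\widehat{\omega})$ is indefinite, and this indefiniteness property is shared because $H_1(\ri\widehat{\omega}) = H_{k,1}(\ri\widehat{\omega})$.

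\textbf{Part (ii).} Using the same basis, Theorem~\ref{thm:interpolate_matrixfuns}(ii) additionally gives $B^H Q W(\ri\widehat{\omega})^{-2} B = B_k^H Q_k W_k(\ri\widehat{\omega})^{-2} B_k$. Since
\[
    \frac{d M(\ri\omega)}{d\omega} \; = \; \ri\, B^H Q W(\ri\omega)^{-2} B,
\]
the derivative $dM/d\omega$ (and also $dM^H/d\omega$) matches at $\ri\widehat{\omega}$. Implicit differentiation of $L L^H = M^H M$ with the positive-diagonal normalization produces a matching $dL/d\omega$, and so $d\widetilde{H}_0/d\omega$, $d\widetilde{H}_1/d\omega$, and therefore $dH_0/d\omega$, $dH_1/d\omega$ also agree with their reduced analogues at $\ri\widehat{\omega}$. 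The envelope (Danskin) theorem, combined with the differentiability hypothesis, gives
\[
    \frac{d\, \widetilde{\eta}^{\rm Herm}(R;B,\ri\omega)}{d\omega}\bigg|_{\omega = \widehat{\omega}} \; = \; v^H \frac{d(H_0(\ri\omega) + t^\ast H_1(\ri\omega))}{d\omega}\bigg|_{\omega = \widehat{\omega}} v,
\]
where $t^\ast$ is the common maximizer (common by part (i)) and $v$ is the unit eigenvector of $\lambda_{\min}(H_0(\ri\widehat\omega) + t^\ast H_1(\ri\widehat\omega))$; the identical formula holds for the reduced problem with the same $t^\ast$ and $v$. Combining with the matched matrix derivatives yields (\ref{eq:main_interpolation_kes2}).

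\textbf{Main obstacle.} The hardest part is making the envelope-plus-eigenvalue-derivative step airtight. One has to rule out ambiguity from a multiple smallest eigenvalue (or a non-unique inner maximizer $t^\ast$), and the clean hypothesis is the assumed differentiability of $\widetilde{\eta}^{\rm Herm}$ and $\widetilde{\eta}^{\rm Herm}_k$ at $\widehat\omega$, which forces simplicity of $\lambda_{\min}$ generically and justifies a single-eigenvector formula. A subtler bookkeeping issue is that the uniqueness of $L(\lambda)$ (and hence of $\widetilde{H}_1$) hinges on the real-positive-diagonal Cholesky convention noted after Theorem~\ref{thm_str_Herm}; without it, $L$ would only be defined up to right multiplication by a unitary diagonal, and one would have to verify invariance explicitly. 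Once these points are handled, the matching of $M(\ri\widehat\omega)$ and $dM/d\omega(\ri\widehat\omega)$ propagates cleanly through the algebraic pipeline to deliver both equalities.
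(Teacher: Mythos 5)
Your proof follows essentially the same route as the paper's: Lemma~\ref{thm:eta_basis_independent} to fix a convenient orthonormal basis, Theorem~\ref{thm:interpolate_matrixfuns} to match $B^HQW(\ri\widehat\omega)^{-1}B$ and $B^HQW(\ri\widehat\omega)^{-2}B$ with their reduced counterparts, Cholesky uniqueness (and uniqueness of the lower-triangular solution of the differentiated Cholesky equation) to propagate the match to $H_0$, $H_1$ and their $\omega$-derivatives, and the eigenvalue-derivative/envelope formula with the common $\widehat t$ and eigenvector $v$ for part~(ii). The only slip is the parenthetical identity $H_0=(M^HM)^{-1}$ — the paper defines $H_0=L^{-1}L^{-H}$, not $L^{-H}L^{-1}$ — which is immaterial, since all your argument needs is that $H_0$ is determined by $L$, hence by $M$.
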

\begin{proof}
Without loss of generality, we may assume that the matrix $V_k$ is such that
$
	V_k
		=
	\left[
		\begin{array}{cc}
		\widetilde{V}_k	  &	\widehat{V}_k	
		\end{array}
	\right],
$
with the columns of $\widetilde{V}_k$ forming an orthonormal basis for ${\rm Im}( W({\rm i} \widehat{\omega})^{-1} B )$.
It suffices to prove the claims for this particular choice of orthonormal basis,
because it is established in Lemma \ref{thm:eta_basis_independent} that the function
$\widetilde{\eta}^{\rm Herm}_k(R; B, {\rm i} \omega)$, hence its derivative, are
independent of the choice of $V_k$ as long as its columns form an orthonormal basis for ${\mathcal V}_k$.

\textbf{(i)} By the definitions of $\widetilde{H}_0 (\lambda), \widetilde{H}_{k,0} (\lambda)$, and part (i) of
Theorem \ref{thm:interpolate_matrixfuns}, we have
\[
	\begin{split}
	\widetilde{H}_0 ({\rm i} \widehat{\omega})
			& \; = \;
		(B^{H} Q W({\rm i} \widehat{\omega})^{-1} B)^H (B^{H} Q W({\rm i} \widehat{\omega})^{-1} B)	\\
			& \; = \;
		(B^{H}_k Q_k W_k({\rm i} \widehat{\omega})^{-1} B_k)^H (B^{H}_k Q_k W_k({\rm i} \widehat{\omega})^{-1} B_k)
			\; = \;
	\widetilde{H}_{k,0} ({\rm i} \widehat{\omega}).
	\end{split}
\]
This also implies that $L({\rm i} \widehat{\omega}) = L_k({\rm i} \widehat{\omega})$ due to the uniqueness
of the Cholesky factors of $\widetilde{H}_0 ({\rm i} \widehat{\omega})$, $\widetilde{H}_{k,0} ({\rm i} \widehat{\omega})$. Therefore,
\[
	H_0({\rm i} \widehat{\omega}) = L({\rm i} \widehat{\omega})^{-1} L({\rm i} \widehat{\omega})^{-H}
							=	L_k({\rm i} \widehat{\omega})^{-1} L_k({\rm i} \widehat{\omega})^{-H}
							=	H_{k,0} ({\rm i} \widehat{\omega}).
\]
Furthermore,
\[
	\begin{split}
	\widetilde{H}_1 ({\rm i} \widehat{\omega})
				& \; = \;
		L({\rm i} \widehat{\omega})^{-1} (B^{H} Q W({\rm i} \widehat{\omega})^{-1} B)^H L({\rm i} \widehat{\omega})^{-H}	\\
				& \; = \;
L_k({\rm i} \widehat{\omega})^{-1} (B^{H}_k Q_k W_k({\rm i} \widehat{\omega})^{-1} B_k)^H L_k({\rm i} \widehat{\omega})^{-H}	
				\; = \;
		\widetilde{H}_{k,1} ({\rm i} \widehat{\omega})	
	\end{split}
\]
and
\[
	H_1 ({\rm i} \widehat{\omega})
			=
	{\rm i}
	\left(
			\widetilde{H}_1({\rm i} \widehat{\omega})
					-
			\widetilde{H}_1({\rm i} \widehat{\omega})^H
	\right)
			=
	{\rm i}
	\left(
			\widetilde{H}_{k,1}({\rm i} \widehat{\omega})
					-
			\widetilde{H}_{k,1}({\rm i} \widehat{\omega})^H
	\right)
			=
	H_{k,1} ({\rm i} \widehat{\omega}).
\]
Since $H_1 ({\rm i} \widehat{\omega}) = H_{k,1} ({\rm i} \widehat{\omega})$, it follows that
$\widetilde{\eta}^{\rm Herm}(R; B, {\rm i} \widehat{\omega})$ is finite if and only if
$\widetilde{\eta}^{\rm Herm}_k(R; B, {\rm i} \widehat{\omega})$ is finite. Additionally,
if $\widetilde{\eta}^{\rm Herm}(R; B, {\rm i} \widehat{\omega})$ is finite, then
\[
	\begin{split}
	\widetilde{\eta}^{\rm Herm}(R; B, {\rm i} \widehat{\omega})
			& \;	=	\;
	\max_{t \in {\mathbb R}} \: \lambda_{\min} (H_0({\rm i} \widehat{\omega}) + t H_1 ({\rm i} \widehat{\omega}))	\\
			& \; =	\;
	\max_{t \in {\mathbb R}} \: \lambda_{\min} (H_{k,0}({\rm i} \widehat{\omega}) + t H_{k,1} ({\rm i} \widehat{\omega}))
			\; =	\;
	\widetilde{\eta}^{\rm Herm}_k(R; B, {\rm i} \widehat{\omega}),
	\end{split}
\]
completing the proof of (\ref{eq:main_interpolation_kes}).

\textbf{(ii)}
Now let us suppose that $\widetilde{\eta}^{\rm Herm}(R; B, {\rm i} \omega)$ and $\widetilde{\eta}^{\rm Herm}_k(R; B, {\rm i} \omega)$
are differentiable at $\widehat{\omega}$.
To prove the interpolation property in the derivatives, we benefit from the analytical expressions \cite{Lancaster1964}
\begin{equation}\label{eq:eta_derivative}
	\begin{split}
	& \frac{d\, \widetilde{\eta}^{\rm Herm}(R; B, {\rm i} \widehat{\omega})}{ d \omega}
			\;	=	\;
v^H	\left( \frac{d H_0({\rm i} \widehat{\omega}) }{d \omega} + \widehat{t} \: \frac{d H_1 ({\rm i} \widehat{\omega})}{d \omega} \right)	v,	\\
	& \frac{d\, \widetilde{\eta}^{\rm Herm}_k(R; B, {\rm i} \widehat{\omega})}{ d \omega}
			\;	=	\;
	v^H	\left(  \frac{d H_{k,0}' ({\rm i} \widehat{\omega}) }{d \omega} + \widehat{t} \: \frac{d H_{k,1}' ({\rm i} \widehat{\omega})}{ d \omega} \right)	v,
	\end{split}
\end{equation}
where
\[
		\widehat{t} := \argmax_t \lambda_{\min} (H_0({\rm i} \widehat{\omega}) + t H_1 ({\rm i} \widehat{\omega}))	
				= \argmax_t \lambda_{\min} (H_{k,0}({\rm i} \widehat{\omega}) + t H_{k,1} ({\rm i} \widehat{\omega}))
\]
and $v$ is a unit eigenvector corresponding to
$\lambda_{\min} (H_0({\rm i} \widehat{\omega}) + \widehat{t} H_1 ({\rm i} \widehat{\omega}))$
					$=$
	$\lambda_{\min} ($ $H_{k,0}({\rm i} \widehat{\omega}) + \widehat{t} H_{k,1} ({\rm i} \widehat{\omega}))$.
Thus, it suffices to prove that $d H_0({\rm i} \widehat{\omega}) / d \omega  = d H_{k,0}({\rm i} \widehat{\omega}) / d\omega$
and $d H_1({\rm i} \widehat{\omega}) / d\omega  =  d H_{k,1}'({\rm i} \widehat{\omega}) / d\omega$ in order to show
(\ref{eq:main_interpolation_kes2}). It follows from parts (i) and (ii) of
Theorem \ref{thm:interpolate_matrixfuns} that
\[
	\begin{split}
	d \widetilde{H}_0'({\rm i} \widehat{\omega}) /  d\omega
			& =
	-{\rm i} (B^{H} Q W({\rm i} \widehat{\omega})^{-1} B)^H (B^{H} Q W({\rm i} \widehat{\omega})^{-2} B)	\hskip 10ex	\\
&	\hskip 3ex + \; {\rm i} (B^{H} Q W({\rm i} \widehat{\omega})^{-2} B)^H (B^{H} Q W({\rm i} \widehat{\omega})^{-1} B)		\\
& =  -{\rm i} (B^{H}_k Q_k W_k({\rm i} \widehat{\omega})^{-1} B_k)^H (B^{H}_k Q_k W_k({\rm i} \widehat{\omega})^{-2} B_k)	\\
&\hskip 3ex + \; {\rm i} (B^{H}_k Q_k W_k({\rm i} \widehat{\omega})^{-2} B_k)^H (B^{H}_k Q_k W_k({\rm i} \widehat{\omega})^{-1} B_k)
			=
	d \widetilde{H}_{k,0}'({\rm i} \widehat{\omega}) / d \omega.
	\end{split}
\]
Now let us determine the derivatives of the Cholesky factors, which at a given $\omega$ satisfy
\[
	\widetilde{H}_0({\rm i} \omega) =  L(\omega) L({\rm i} \omega)^H		
	\quad
	\text{and}
	\quad
	\widetilde{H}_{k,0}({\rm i} \omega) =  L_k({\rm i} \omega) L_k({\rm i} \omega)^H .
\]		
Differentiating these two equations, and setting the derivatives equal to each other at $\widehat{\omega}$ yield
\[
	\begin{split}
	L({\rm i} \widehat{\omega}) \left( \frac{d L({\rm i} \widehat{\omega})}{d \omega} \right)^H 	+	
					\frac{d L({\rm i} \widehat{\omega})}{d \omega} L({\rm i} \widehat{\omega})^H
				&	\;	=	\;
	L_k({\rm i} \widehat{\omega}) \left( \frac{d L_k({\rm i} \widehat{\omega})}{d \omega} \right)^H 	+	
					\frac{ L_k({\rm i} \widehat{\omega})}{d \omega} L_k({\rm i} \widehat{\omega})^H		\\
				&	\;	=	\;
	L({\rm i} \widehat{\omega}) \left( \frac{d L_k({\rm i} \widehat{\omega})}{d \omega} \right)^H 	+	
					\frac{d L_k({\rm i} \widehat{\omega})}{d \omega} L({\rm i} \widehat{\omega})^H,	
	\end{split}
\]
where we have used that  $L({\rm i} \widehat{\omega}) = L_k({\rm i} \widehat{\omega})$ as established in
part (i). Thus, both $d L({\rm i} \widehat{\omega}) / d\omega$ and $d L_k({\rm i} \widehat{\omega}) / d\omega$ are
lower triangular solutions of the matrix equation
\[
	d \widetilde{H}_0({\rm i} \widehat{\omega}) / d\omega = L({\rm i} \widehat{\omega}) X^H 	+	X L({\rm i} \widehat{\omega})^H.
\]
This linear matrix equation has a unique lower triangular solution,
so $dL({\rm i} \widehat{\omega}) / d\omega = dL_k({\rm i} \widehat{\omega}) / d\omega$. Now, by the definitions of
$H_0 ({\rm i} \omega), H_{k,0} ({\rm i} \omega)$, we have
\[
	L({\rm i} \omega) H_0 ({\rm i} \omega) L({\rm i} \omega)^H		\;\;	=	\;\;		I
										\;\;	=	\;\;	L_k({\rm i} \omega) H_{k,0} ({\rm i} \omega) L_k({\rm i}\omega)^H.	
\]
Differentiating this equation at $\omega = \widehat{\omega}$ yields
\[
	\begin{split}
	 \frac{d L({\rm i} \widehat{\omega})}{d \omega} H_0({\rm i} \widehat{\omega})  L({\rm i} \widehat{\omega})^H
				\; + \;
	L({\rm i} \widehat{\omega}) \frac{d H_0({\rm i} \widehat{\omega})}{d \omega}  L({\rm i} \widehat{\omega})^H 	\hskip 30ex \\
				\; + \;\;
	L({\rm i} \widehat{\omega}) H_0({\rm i} \widehat{\omega})  \left( \frac{d L({\rm i} \widehat{\omega})}{d \omega} \right)^H
			\;\;	=	\;\;			
	 \frac{d L_k({\rm i} \widehat{\omega})}{d \omega} H_{k,0}({\rm i} \widehat{\omega})  L_k({\rm i} \widehat{\omega})^H
				\;\; + \; 	\hskip 5ex \\
	\hskip 8ex L_k({\rm i} \widehat{\omega}) \frac{d H_{k,0}({\rm i} \widehat{\omega})}{d \omega}  L_k({\rm i} \widehat{\omega})^H
				\; + \;
	L_k({\rm i} \widehat{\omega}) H_{k,0}({\rm i} \widehat{\omega})  \left( \frac{d L_k({\rm i} \widehat{\omega})}{d \omega} \right)^H.	
	\end{split}
\]
Using that $L({\rm i} \widehat{\omega}) = L_k({\rm i} \widehat{\omega})$,
$dL({\rm i} \widehat{\omega}) / d\omega =  dL_k({\rm i} \widehat{\omega}) / d\omega$, and
$H_0({\rm i} \widehat{\omega}) = H_{k,0}({\rm i} \widehat{\omega})$, we deduce that
$dH_0({\rm i} \widehat{\omega}) / d\omega = dH_{k,0}({\rm i} \widehat{\omega}) / d\omega$.
Next we focus on the derivatives
$d \widetilde{H}_1({\rm i} \omega) / d\omega, d \widetilde{H}_{k,1}({\rm i} \omega) / d\omega$.
In particular, we use that
\[
	L({\rm i} \omega) \widetilde{H}_1({\rm i} \omega)  L({\rm i} \omega)^H
			\;\;	=	\;\;
	 (B^H Q W({\rm i}\omega)^{-1} B)^{H}.
\]
Differentiating both sides of the last equation at $\omega = \widehat{\omega}$ gives rise to
\[
	\begin{split}
	\frac{d \left\{ L({\rm i} \omega) \widetilde{H}_1({\rm i}\omega)
					L({\rm i} \omega)^H \right\}}{d\omega} \bigg|_{\omega = \widehat{\omega}}
				\;\; & = \;\;	
	{\rm i} (B^H Q W({\rm i} \widehat{\omega})^{-2} B)^{H}	 \\
				 \;\; & = \;\;	
	{\rm i} (B^H_k Q_k W_k({\rm i} \widehat{\omega})^{-2} B_k)^{H}	\\
				\;\; & = \;\;	
\frac{d \left\{ L_k({\rm i} \omega) \widetilde{H}_{k,1}({\rm i} \omega)  L_k({\rm i} \omega)^H \right\}}{d\omega} \bigg|_{\omega = \widehat{\omega}}	\;\;\;	,
	\end{split}	
\]
which in turn implies that
\[
	\begin{split}
	 \frac{d L({\rm i} \widehat{\omega})}{d \omega} \widetilde{H}_1({\rm i} \widehat{\omega})  L({\rm i} \widehat{\omega})^H
				\; + \;
	L({\rm i} \widehat{\omega}) \frac{d \widetilde{H}_1({\rm i} \widehat{\omega})}{d \omega}  L({\rm i} \widehat{\omega})^H	
				\hskip 28ex \\
				\; + \;\;
	L({\rm i} \widehat{\omega}) \widetilde{H}_1({\rm i} \widehat{\omega})  \left( \frac{d L({\rm i} \widehat{\omega})}{d \omega} \right)^H
			\;\;	=	\;\;			
	 \frac{d L_k({\rm i} \widehat{\omega})}{d \omega} \widetilde{H}_{k,1}({\rm i} \widehat{\omega})  L_k({\rm i} \widehat{\omega})^H
				\;\; + \;		\hskip 5ex \\
	L_k({\rm i} \widehat{\omega}) \frac{d \widetilde{H}_{k,1}({\rm i} \widehat{\omega})}{d \omega}  L_k({\rm i} \widehat{\omega})^H
				\; + \;
	L_k({\rm i} \widehat{\omega}) \widetilde{H}_{k,1}({\rm i} \widehat{\omega}) \left( \frac{d L_k({\rm i} \widehat{\omega})}{d \omega} \right)^H .	
	\end{split}
\]
Once again exploiting that $L({\rm i} \widehat{\omega}) = L_k({\rm i} \widehat{\omega})$,
$dL({\rm i} \widehat{\omega}) / d\omega =  dL_k({\rm i} \widehat{\omega}) / d\omega$, as well as
$\widetilde{H}_1({\rm i} \widehat{\omega})	= \widetilde{H}_{k,1}({\rm i} \widehat{\omega})$
in the last equation, we obtain
	$d \widetilde{H}_1({\rm i} \widehat{\omega}) / d\omega	= d \widetilde{H}_{k,1}({\rm i} \widehat{\omega}) / d\omega$
which implies that
\begin{equation*}
	\begin{split}
	\frac{d H_1 ({\rm i} \widehat{\omega})}{d\omega}
			& 	\;\;	=	\;\;
	{\rm i}
		\left\{
			\frac{d \widetilde{H}_1({\rm i} \widehat{\omega})}{d\omega}
					-
			\left( \frac{d \widetilde{H}_1({\rm i} \widehat{\omega})}{d \omega} \right)^H		
		\right\}	\\
			&	\;\;	=	\;\;
	{\rm i}
		\left\{
			\frac{d \widetilde{H}_{k,1}({\rm i} \widehat{\omega})}{d \omega}
					-
			\left( \frac{d \widetilde{H}_{k,1}({\rm i} \widehat{\omega})}{d \omega} \right)^H
		\right\}
			\;\; = \;\;
	\frac{d H_{k,1} ({\rm i} \widehat{\omega})}{d \omega},
	\end{split}
\end{equation*}
and  the proof of (\ref{eq:main_interpolation_kes2}) is complete.
\end{proof}
\begin{remark}\label{rem:rem2}{\rm
The function $\widetilde{\eta}^{\rm Herm}(R; B, {\rm i} \omega)$ is differentiable at $\widehat{\omega}$
whenever $\widetilde{\eta}^{\rm Herm}(R;$ $B, {\rm i} \widehat{\omega})$ is finite
(equivalently $H_1({\rm i} \widehat{\omega})$ is indefinite),
$\lambda_{\min}(H_0({\rm i} \widehat{\omega}) + \widehat{t} H_1({\rm i} \widehat{\omega}))$ is simple
where $\widehat{t} := \argmax_{t \in {\mathbb R}} \lambda_{\min}(H_0({\rm i} \widehat{\omega}) + t H_1({\rm i} \widehat{\omega}))$,
and the global minimum of
$\lambda_{\min}(H_0({\rm i} \widehat{\omega}) + t H_1({\rm i} \widehat{\omega}))$ over $t$ is attained at a unique $t$.
These conditions guarantee also the differentiability of $\widetilde{\eta}^{\rm Herm}_k(R; B, {\rm i} \omega)$
at $\widehat{\omega}$ provided the subspace inclusion in (\ref{eq:subspace_inclusion}) holds.
This latter differentiability property is due to $H_0({\rm i} \widehat{\omega}) = H_{k,0}({\rm i} \widehat{\omega})$ and
$H_1({\rm i} \widehat{\omega}) = H_{k,1}({\rm i} \widehat{\omega})$
from part (i) of Theorem \ref{thm:main_Hermite_interpolate}.

Additionally, when the concave function $g(t) := \lambda_{\min}(H_0({\rm i} \widehat{\omega}) + t H_1({\rm i} \widehat{\omega}))$
attains its maximum, the maximizer is nearly always unique. The function $g(t)$ is the minimum of $m$ real analytic functions
\cite{Kato1995,Rellich1969} each corresponding to an eigenvalue of $H_0({\rm i} \widehat{\omega}) + t H_1({\rm i} \widehat{\omega})$.
If $g(t)$ does not have a unique maximizer, then at least one of these real analytic functions
must be constant and equal to $\widetilde{\eta}^{\rm Herm}(R; B, {\rm i} \widehat{\omega}) = \max_t g(t)$ everywhere.
Thus, a simple sufficient condition that ensures the uniqueness of the maximizer is that $H_1({\rm i} \widehat{\omega})$
has full rank, in which case all eigenvalues of $H_0({\rm i} \widehat{\omega}) + t H_1({\rm i} \widehat{\omega})$
blow up (either to $\infty$ or $-\infty$) as $t \rightarrow \infty$ implying that each of the real analytic functions is
non-constant.
}
\end{remark}

\subsubsection{The Subspace Framework for $r^{\rm Herm}(R; B)$}
The Hermite interpolation result of Theorem~\ref{thm:main_Hermite_interpolate} immediately suggests
the subspace framework in Algorithm \ref{alg:structured_kadii_sf} for the computation of $r^{\rm Herm}(R; B)$. 
This resembles the structure preserving subspace framework
% Section \ref{sec:unstructured_subspace_kR_kJ}
to compute the unstructured stability radii $r(R; B, C)$ $\; = \;$ $r(J; B, C)$, in particular, in the way
the subspaces ${\mathcal V}_k, {\mathcal W}_k$ are built. At every iteration, a reduced problem
is solved using the ideas in Section~\ref{sec:small_scale_st} and employing Algorithm \ref{eigopt}.
Letting $\widehat{\omega}$ be the global minimizer of the reduced problem, the subspaces are
expanded so that the original function $\widetilde{\eta}^{\rm Herm}(R; B, {\rm i} \omega)$
is Hermite interpolated by its reduced counter-part at $\omega = \widehat{\omega}$.

Assuming that the sequence $\{ \omega_k \}$ converges to a minimizer $\omega_\ast$ of the
function $\widetilde{\eta}^{\rm Herm}(R; B, {\rm i}\omega)$ such that $H_1({\rm i} \omega_\ast)$ is indefinite
with full rank and $\lambda_{\min} (H_0({\rm i} \omega_\ast) + t_\ast H_1({\rm i} \omega_\ast))$
is simple where $t_\ast := \argmax_{t \in {\mathbb R}} \lambda_{\min} (H_0({\rm i} \omega_\ast) + t H_1({\rm i} \omega_\ast))$,
it can be shown that the sequence $\{ \omega_k \}$ converges at a
super-linear rate. Here the analysis in \cite{Aliyev2018} applies. The conditions
that $H_1({\rm i} \omega_\ast)$ is indefinite with full rank, $\lambda_{\min} (H_0({\rm i} \omega_\ast) + t_\ast H_1({\rm i} \omega_\ast))$
is simple, and the interpolation properties $H_0({\rm i} \omega_k) = H_{k,0}({\rm i} \omega_k)$,
$H_1({\rm i} \omega_k) = H_{k,1}({\rm i} \omega_k)$ ensure that the full function
$\widetilde{\eta}^{\rm Herm}(R; B, {\rm i} \omega)$, as well as the reduced function
$\widetilde{\eta}^{\rm Herm}_k(R; B, {\rm i} \omega)$ for all large $k$ are differentiable
at all $\omega$ in a ball ${\mathcal B}(\omega_\ast,\delta)$ centered at $\omega_\ast$
and of radius $\delta$. This differentiability property is essential for the applicability of
the rate-of-convergence analysis in \cite{Aliyev2018}.

\begin{algorithm}[ht]
%\label{algor}
 \begin{algorithmic}[1]
\REQUIRE{Matrices $B \in \mathbb{C}^{n\times m}$, $ J, R, Q \in \mathbb{C}^{n\times n}$.}
\ENSURE{The sequence $\{ \omega_k \}$.}
\STATE Choose the initial interpolation points $\omega_{1}, \dots , \omega_{j} \in {\mathbb R}.$
%\IF{$m = p$}
	\STATE $V_j \gets  {\rm orth} \begin{bmatrix} W(\ri\omega_1)^{-1}B  & W(\ri\omega_1)^{-2}B
									&	\dots		&	W(\ri\omega_j)^{-1}B  & W(\ri\omega_j)^{-2}B 	\end{bmatrix}$,	\\
%	\hskip 19ex $\quad \text{and}\quad \quad
	$\;\;\;\; W_j \gets QV_j(V_j^HQV_j)^{-1} $. \label{defn_init_subspaces_structured}
\FOR{$k = j,\,j+1,\,\dots$}
	\STATE
	$\; \displaystyle \omega_{k+1} \gets \: {\rm argmin}_{\omega \in {\mathbb R}} \; \widetilde{\eta}^{\rm Herm}_k(R; B, {\rm i}\omega)$. \label{solve_keduced_structured}
	%\IF{$m = p$}
	\STATE $\widehat{V}_{k+1} \gets  \begin{bmatrix} W(\ri\omega_{k+1})^{-1}B  & W(\ri\omega_{k+1})^{-2}B \end{bmatrix}$.
	\label{defn_later_subspaces_structured}
%	\ELSIF{$m < p$}
	\STATE $V_{k+1} \gets \operatorname{orth}\left(\begin{bmatrix} V_{k} & \widehat{V}_{k+1} \end{bmatrix}\right)
		\quad \text{and}\quad W_{k+1} \gets Q V_{k+1}(V_{k+1}^H Q V_{k+1})^{-1}$.
\ENDFOR
\end{algorithmic}
\caption{$\;$ Subspace method for large-scale
computation of the structured stability radius $r^{\rm Herm}(R; B)$.}
\label{alg:structured_kadii_sf}
\end{algorithm}

\subsection{Numerical Experiments}\label{sec:str_num_exp}
In this section we present several numerical tests for  Algorithms~\ref{eigopt}
and~\ref{alg:structured_kadii_sf},
on synthetic examples and the FE model of the disk brake. The test set-up
is similar to the one for the unstructured case in Section~\ref{sec:DH_numexps}. In particular,
for Algorithm \ref{alg:structured_kadii_sf}, we use the same stopping criteria with the same
parameters, but now in terms of $f_k := \argmin_{\omega} \widetilde{\eta}^{\rm Herm}_k(R; B, {\rm i}\omega)$,
so we terminate when $| f_k - f_{k-1} | \leq \varepsilon | f_k + f_{k-1} |/2$ holds, or when one of the
other two conditions stated in Section \ref{sec:test_setup} holds. The initial subspaces are also chosen as described in that section.
\subsubsection{Synthetic Examples}
\textbf{Small Scale Examples.}
We first present numerical results for a small dense random example, where
$J, R, Q \in {\mathbb R}^{20\times 20}$ and $B \in {\mathbb R}^{20\times 2}$. These matrices are generated
by means of the MATLAB commands employed for the generation of the dense family in Section \ref{sec:numexp_syn};
only here the matrices are $20\times 20$ instead of $800 \times 800$ and $R$ is of rank $5$.

The spectrum of $(J-R)Q$ is depicted on the top in Figure \ref{fig:structured20by20_spec}. Application of Algorithm \ref{eigopt} to this example yields $r^{\rm Herm}(R;B) = 0.0501$, and the point
that is first reached on the imaginary axis under the smallest perturbation  is $1.9794{\rm i}$,  i.e., $\widetilde{\eta}^{\rm Herm}_k(R; B, {\rm i}\omega)$ is minimized at $\omega = 1.9794$.
At the bottom of Figure \ref{fig:structured20by20_spec}, the spectra of matrices
of the form $(J - (R + B \Delta B^H))Q$ are plotted for $100000$ randomly chosen
Hermitian $\Delta$ such that $\| \Delta \|_2 = 0.0501$. One can notice that some of the eigenvalues (nearly) touch the imaginary axis at $1.9794{\rm i}$ marked with a circle.
\begin{figure}
\begin{center}
		\begin{tabular}{c}
			\includegraphics[width=9cm]{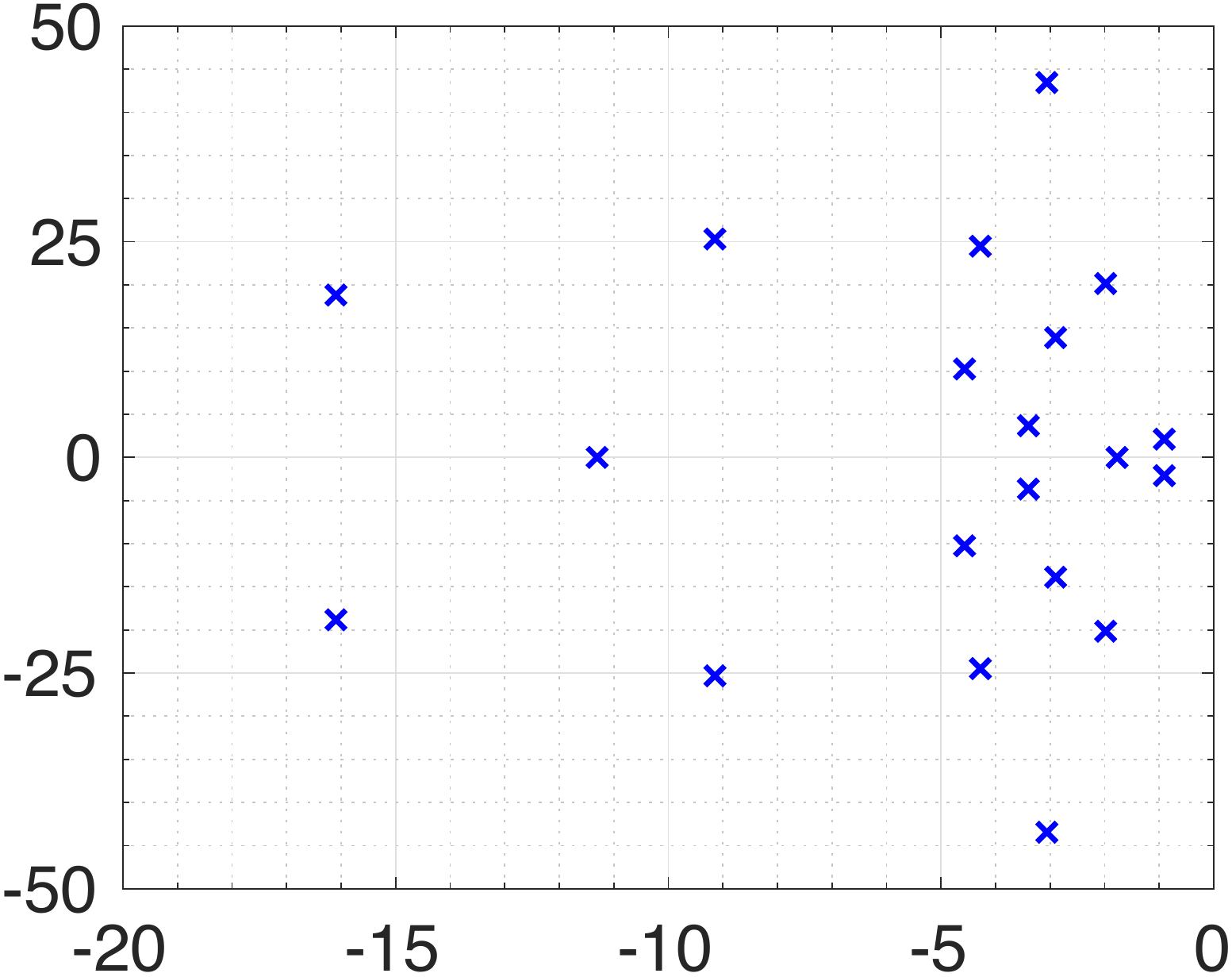} 	\\
			 \includegraphics[width=9cm]{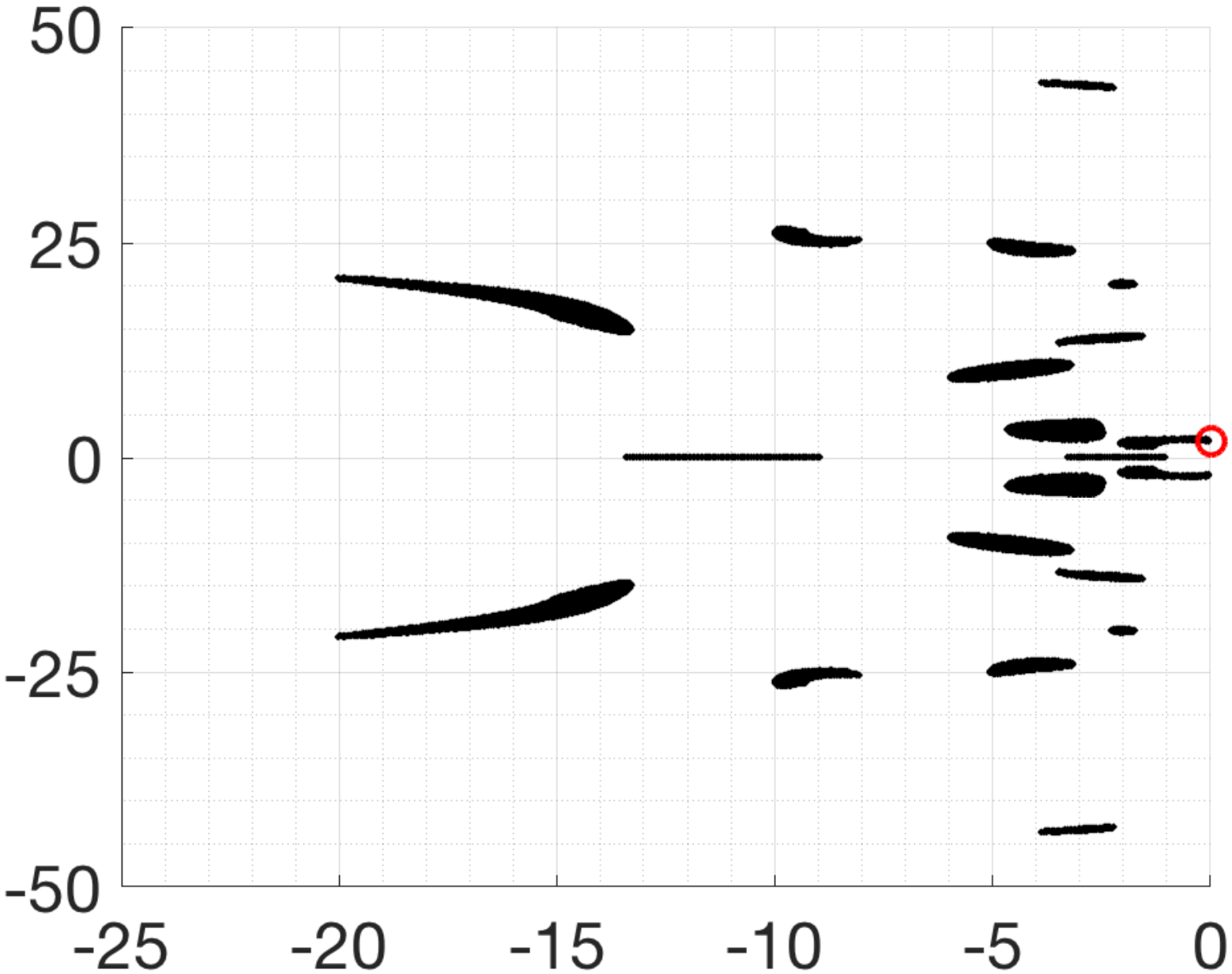}	 \\
		\end{tabular}
\end{center}
		\caption{A DH system of order $20$ with random system matrices.
		\textbf{(Top)} A plot of the spectrum for $(J-R)Q$.
		\textbf{(Bottom)} All eigenvalues of all matrices of the form $(J - (R + B \Delta B^H ))Q$
		for $100000$ randomly chosen Hermitian $\Delta$ with $\| \Delta \|_2 = r^{\rm Herm}(R; B)$
		are displayed. The circle marks $1.9794{\rm i}$, the global minimizer of
		$\widetilde{\eta}^{\rm Herm}_k(R; B, \lambda)$ over $\lambda \in {\rm i} {\mathbb R}$.}
%		\centering
		\label{fig:structured20by20_spec}
\end{figure}

The subspace framework for this example is illustrated in Figure \ref{fig:structured20by20_sf},
where the solid, dashed curves correspond to the plots of the full function
$\widetilde{\eta}^{\rm Herm}(R; B, {\rm i}\omega)$, the reduced function
$\widetilde{\eta}^{\rm Herm}_k(R; B, {\rm i}\omega)$, respectively, and the
circle represents the minimizer of $\widetilde{\eta}^{\rm Herm}_k(R; B, {\rm i}\omega)$.
On the top row, the framework is initiated with two interpolation points at 0 and near -20;
the dashed curve interpolates the solid curve at these points. Then the subspaces
are expanded so that the Hermite interpolation property is also satisfied at the minimizer
of the dashed curve on the top, leading to the dashed curve at the bottom, which has
nearly the same global minimizer as the solid curve. Note that starting from $\omega$
near -14 and for smaller $\omega$ values, the matrix $H_1({\rm i} \omega)$ turns out to be
definite for this example, meaning for such $\omega$ values the point ${\rm i} \omega$
is not attainable as an eigenvalue with Hermitian perturbations. In practice, we set the objective
to be minimized at such $\omega$ a value considerably larger than the minimal
value of the objective, which in this example is 0.1.

\medskip

\noindent
\textbf{Large Examples.}
The remaining synthetic examples are with larger random matrices. We created three sets of matrices $J, R, Q, B$ using the commands at the beginning of Section \ref{sec:numexp_syn} for the
generation of the dense family. Each of the three sets consists of four quadruples $J, R, Q \in {\mathbb R}^{n\times n}$,
$B \in {\mathbb R}^{n\times 2}$ with the same $n$, specifically
$n = 1000, 2000, 4000$ for the first, second, third set. The results obtained by applications of
Algorithm \ref{alg:structured_kadii_sf} to compute $r^{\rm Herm}(R; B)$
are reported in Tables \ref{table:structured_1000exs}, \ref{table:structured_2000exs}, \ref{table:structured_4000exs}
for these sets, respectively.
% $n = 1000,  2000, 4000$ family, respectively.

For the first family with  $n = 1000$, the subspace framework, i.e., Algorithm \ref{alg:structured_kadii_sf}
already needs less computing time than Algorithm \ref{eigopt} except for the last example where quite a few additional
subspace iterations have been performed. On this family, the direct application of Algorithm \ref{eigopt}
and the subspace framework return exactly the same values for $r^{\rm Herm}(R; B)$.

For bigger systems Algorithm \ref{eigopt} becomes too computationally expensive,  so we do not report results here for the larger dimensions. A remarkable fact we have observed is that the number of subspace iterations to reach the
prescribed accuracy is usually small and seems independent of $n$. By the definitions of the structured
and unstructured radii, we must have $r^{\rm Herm}(R; B) \geq r(R; B, B^H)$, and the presented radii
in the tables are in harmony with this.

All of these examples involve optimization of highly non-convex and non-smooth functions.
Figure \ref{fig:structured1000by10000_sf} depicts $\widetilde{\eta}^{\rm Herm}(R; B, {\rm i}\omega)$
as a function of $\omega$ with the solid curve for the first example in the first family with $n = 1000$.
The same figure also depicts the reduced function $\widetilde{\eta}^{\rm Herm}_k(R; B, {\rm i}\omega)$
with the dashed curve for the same example at termination after 7 subspace iterations. Even though
the reduced function $\widetilde{\eta}^{\rm Herm}_k(R; B, {\rm i}\omega)$ represented by the
dashed curve in this plot is defined for projected matrices onto $48$ dimensional subspaces, it captures
the original function $\widetilde{\eta}^{\rm Herm}(R; B, {\rm i}\omega)$ remarkably well near the
global minimizer $\omega_\ast = -70.9623$.

\begin{figure}
\begin{center}
		\begin{tabular}{cc}
			\includegraphics[width=8cm]{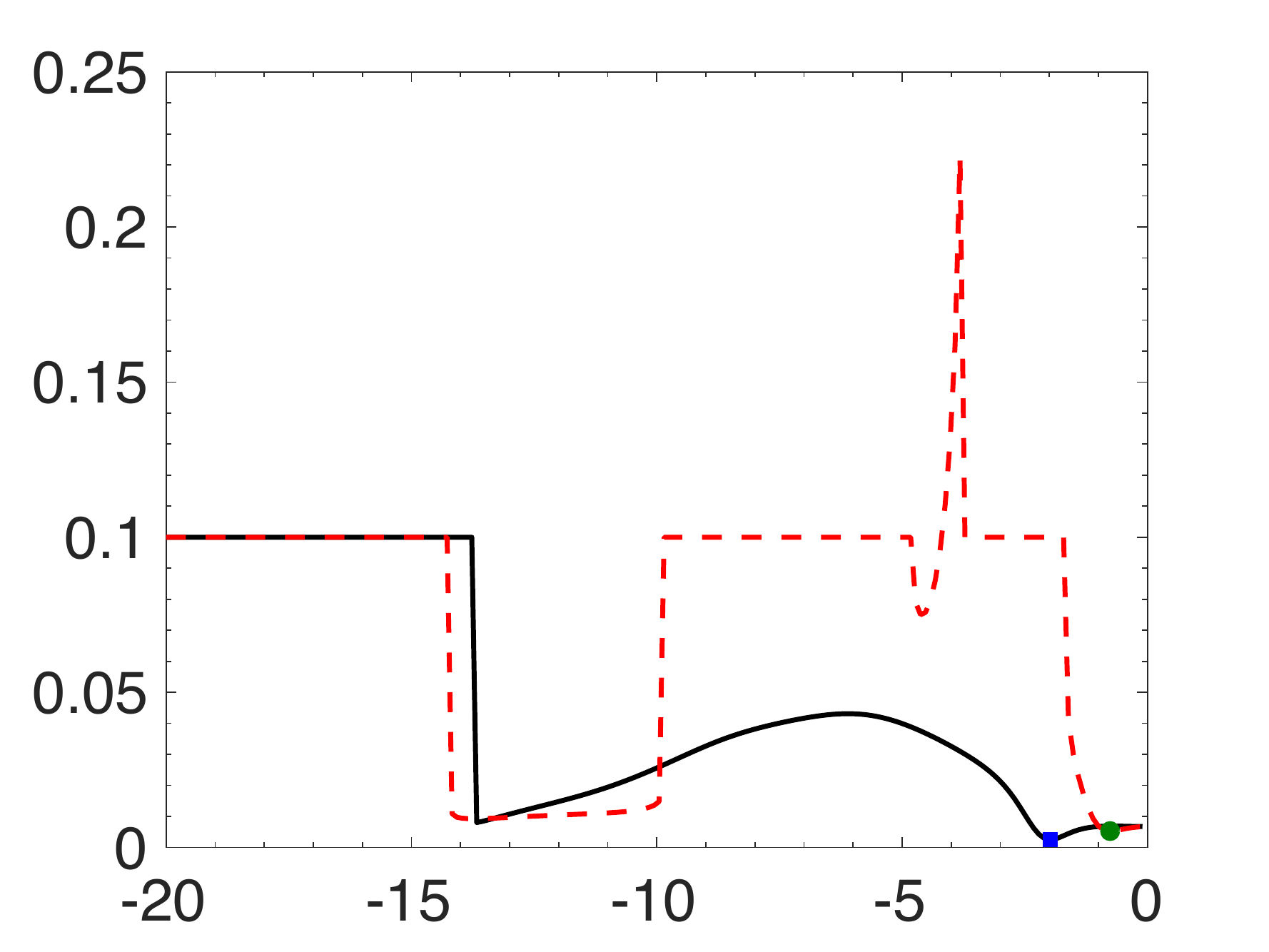} 	\\
			 \includegraphics[width=8cm]{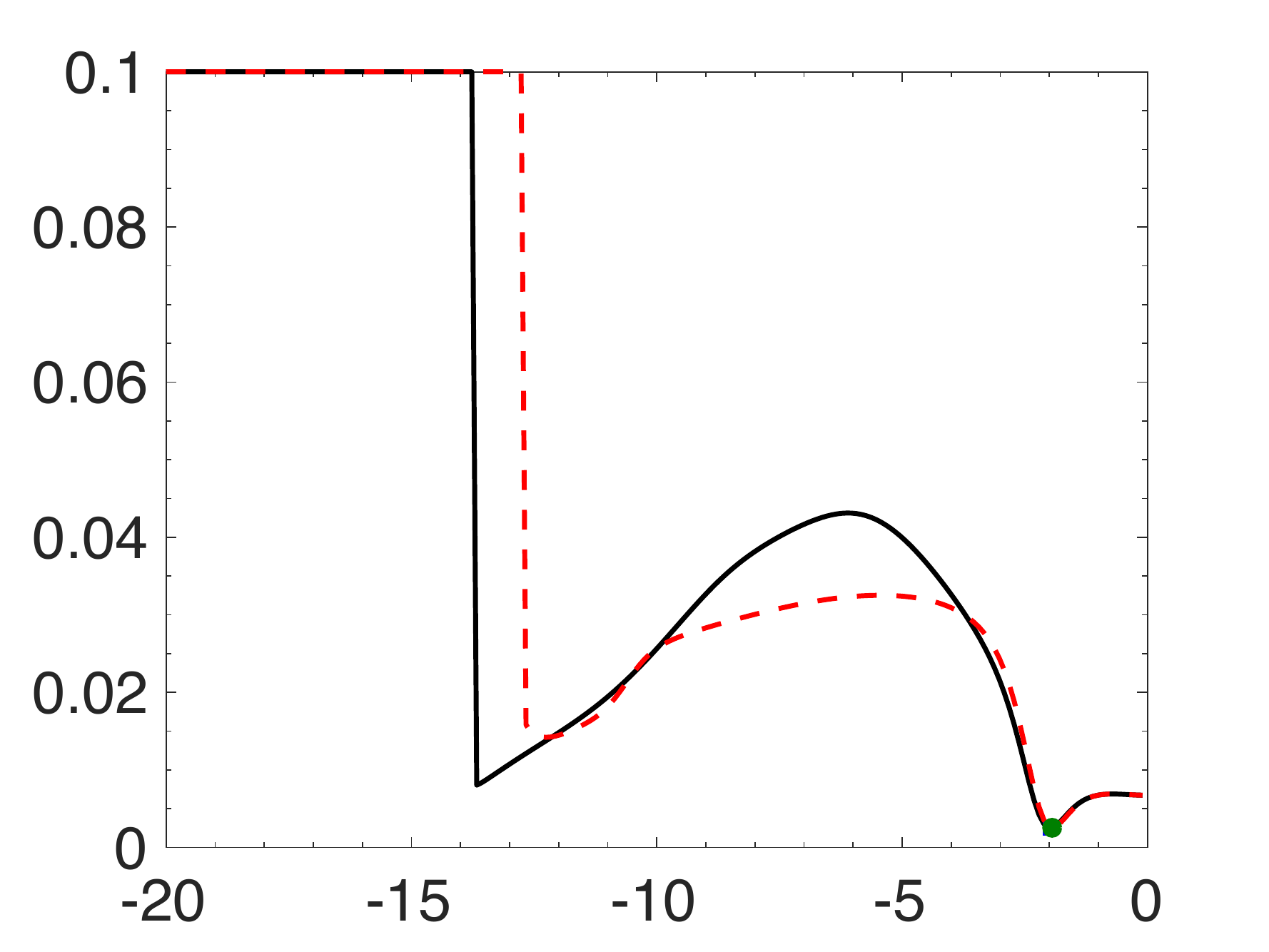}	 \\
		\end{tabular}
\end{center}
		\caption{ Progress of Algorithm \ref{alg:structured_kadii_sf} on a random DH system of order $20$. The solid, dashed
		curves display $\widetilde{\eta}^{\rm Herm}(R; B, {\rm i}\omega)$, $\widetilde{\eta}^{\rm Herm}_k(R; B, {\rm i}\omega)$ as
		functions of $\omega$, respectively. The square, circle represent the global minima of
		$\widetilde{\eta}^{\rm Herm}(R; B, {\rm i}\omega)$, $\widetilde{\eta}^{\rm Herm}_k(R; B, {\rm i}\omega)$.
		\textbf{(Top Plot)} The reduced function interpolates the full function at two points,
		at $\omega = 0$ and near $\omega=-20$.
		\textbf{(Bottom Plot)} Plot of the refined reduced function after applying one subspace iteration. }
%		\centering
		\label{fig:structured20by20_sf}
\end{figure}

\begin{figure}
\begin{center}
		\begin{tabular}{c}
			 \includegraphics[width=8cm]{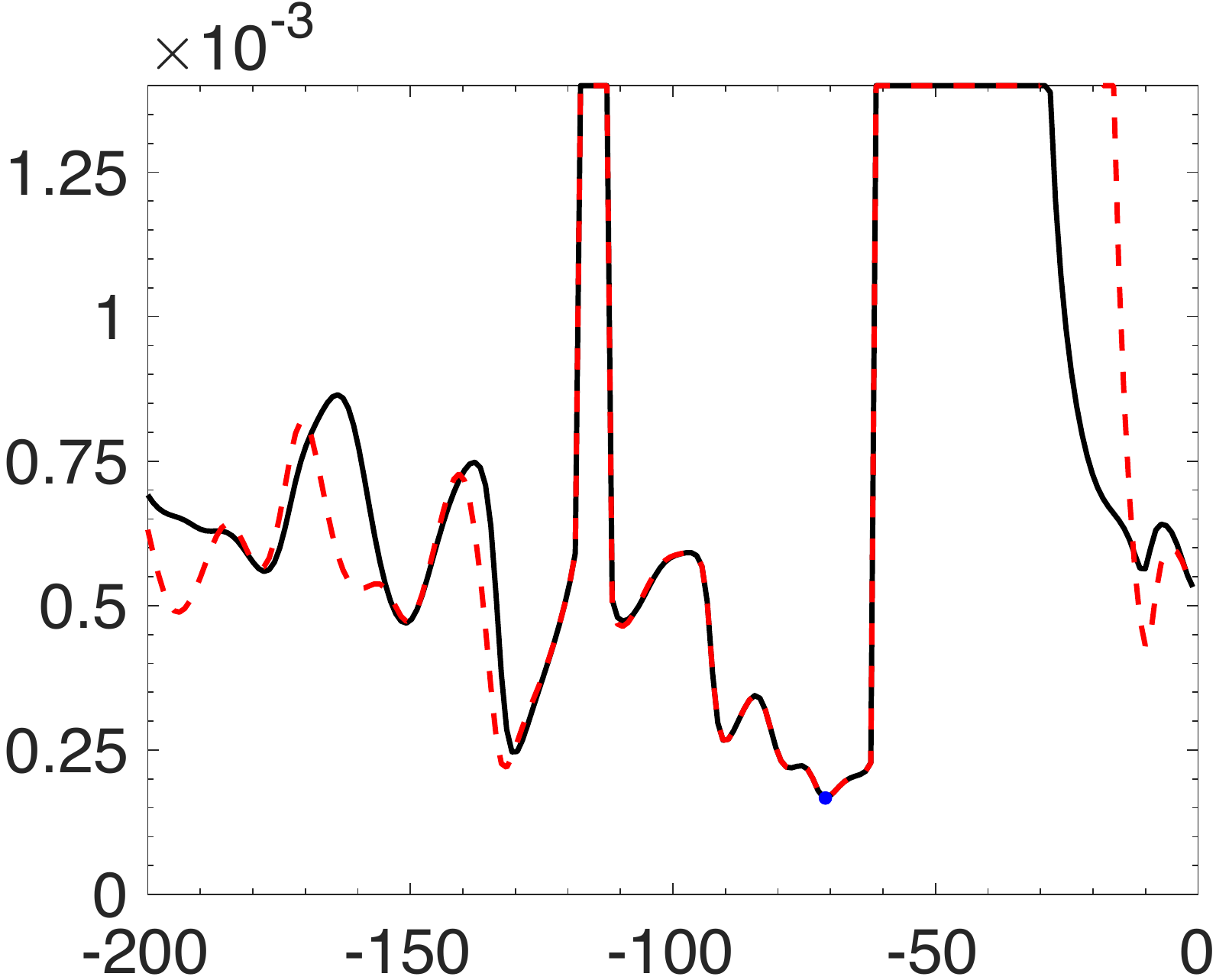}	
		\end{tabular}
\end{center}
		\caption{Application of Algorithm~\ref{alg:structured_kadii_sf} to compute $r^{\rm Herm}(R;B)$
		on a dense random DH system of order $1000$. The full and reduced functions $\widetilde{\eta}^{\rm Herm}(R; B, {\rm i}\omega)$
		and $\widetilde{\eta}^{\rm Herm}_k(R; B, {\rm i}\omega)$ at termination after $7$
		subspace iterations are plotted with the solid and dashed curves, respectively.
		The point $(\omega_\ast, \widetilde{\eta}^{\rm Herm}_k(R; B, {\rm i}\omega_\ast))$ with $\omega_\ast$
		denoting the global minimizer of $\widetilde{\eta}^{\rm Herm}_k(R; B, {\rm i}\omega)$ is marked with a circle. }
%		\centering
		\label{fig:structured1000by10000_sf}
\end{figure}

\begin{table}
\begin{center}
\begin{tabular}{|c||cc|c|c|cc|}
\hline
  &  \multicolumn{2}{c|}{$r^{\rm Herm}(R; B)$}  &
  						\multicolumn{1}{c|}{$r(R; B, B^H)$}		&		
  									\multicolumn{1}{c}{iterations}		&
			\multicolumn{2}{|c|}{run-time} \\ [0.5ex]
  $\#$ &  Alg.~\ref{eigopt} & Alg.~\ref{alg:structured_kadii_sf}
  			& Alg.~\ref{alg:dti}
  			&  Alg.~\ref{alg:structured_kadii_sf}
					&	Alg.~\ref{eigopt} & Alg.~\ref{alg:structured_kadii_sf}	\\ [0.5ex]  \hline	\hline
  % & Alg.~\ref{alg:dti} & \texttt{eigopt}
 % inserts single-line

1     &   0.0129    &     0.0129       & 	0.0113 & 7	&	99.2	&	66.2	\\

2      & 0.0101    &      0.0101       &	0.0101 & 1	&	66.5	&  7.1	\\

3      &  0.0141     &     0.0141  	&	0.0128 & 5	&	128.5 &	72.5	\\

4   	&  0.0096     &     0.0096      &	0.0072 & 20	&	121.2 &	247.2	\\

\hline

\end{tabular}
\end{center}
\caption{ Performance of Algorithm \ref{alg:structured_kadii_sf} to compute $r^{\rm Herm}(R;B)$
		on dense random DH systems of order $1000$. For comparison, the results from Algorithm \ref{eigopt},
		as well as the unstructured radii $r(R;B, B^H)$ by Algorithm \ref{alg:dti} are also included.
		 The fourth column contains the number of subspace iterations, and the fifth the run-times (in $s$). }
\label{table:structured_1000exs}
\end{table}

\begin{table}
\begin{center}
\begin{tabular}{|c||c|c|c|c|}
\hline
  &  \multicolumn{1}{c|}{$r^{\rm Herm}(R; B)$}  &
  						\multicolumn{1}{c|}{$r(R; B, B^H)$}		&		
  									\multicolumn{1}{c}{iterations}		&
			\multicolumn{1}{|c|}{run-time} \\ [0.5ex]
  $\#$ & Alg.~\ref{alg:structured_kadii_sf}
  			& Alg.~\ref{alg:dti}
  			&  Alg.~\ref{alg:structured_kadii_sf}
					& Alg.~\ref{alg:structured_kadii_sf}	\\ [0.5ex]  \hline	\hline
  % & Alg.~\ref{alg:dti} & \texttt{eigopt}
 % inserts single-line

1      &     0.0102      & 	0.0061 & 6	&	149.1	\\

2       &      0.0040       &	0.0012 & 1	&  	17.1	\\

3      &     0.0125  	&	0.0109 & 6	&	175.9	\\

4   	&     0.0101      &	0.0100 & 3	&	126.7	\\

\hline

\end{tabular}
\end{center}
\caption{Performance of Algorithm \ref{alg:structured_kadii_sf} to compute $r^{\rm Herm}(R;B)$ 		on dense random DH systems
of order $n = 2000$. The fourth column contains the number of subspace iterations, and the fifth the run-times (in $s$).
%The results from Algorithm \ref{eigopt} are omitted, because this direct approach %becomes quite expensive.
}
\label{table:structured_2000exs}
\end{table}

\begin{table}
\begin{center}
\begin{tabular}{|c||c|c|c|c|}
\hline
  &  \multicolumn{1}{c|}{$r^{\rm Herm}(R; B)$}  &
  						\multicolumn{1}{|c|}{$r(R; B, B^H)$}		&		
  									\multicolumn{1}{|c|}{iterations}		&
			\multicolumn{1}{|c|}{run-time} \\ [0.5ex]
  $\#$ & Alg.~\ref{alg:structured_kadii_sf}
  			& Alg.~\ref{alg:dti}
  			&  Alg.~\ref{alg:structured_kadii_sf}
					& Alg.~\ref{alg:structured_kadii_sf}	\\ [0.5ex]  \hline	\hline
  % & Alg.~\ref{alg:dti} & \texttt{eigopt}
 % inserts single-line

1     &     0.0090      & 	0.0087 	& 	2	&	282.6	\\

2      &      0.0084       &	0.0068 	& 	3	&  	319.3	\\

3      &     0.0104  	&	0.0086 	& 	2	&	333.1	\\

4   	&     0.0040     &	0.0007 	& 	46	&	786.5	\\

\hline

\end{tabular}
\end{center}
\caption{Performance of Algorithm \ref{alg:structured_kadii_sf} to compute $r^{\rm Herm}(R;B)$ 		on dense random DH systems
of order $n = 4000$. The fourth column contains the number of subspace iterations, and the fifth the run-times (in $s$). }
\label{table:structured_4000exs}
\end{table}

\subsubsection{FE Model of a Disk Brake}
We also applied our implementation of
Algorithm~\ref{alg:structured_kadii_sf} to the FE model of the disk brake described in Section \ref{sec:numexp_brake} which is of the form (\ref{insta}), (\ref{eq:insta_matrices}) with
$G(\Omega), D(\Omega), K(\Omega), M \in {\mathbb R}^{4669\times 4669}$, and
$J, R, Q \in {\mathbb R}^{9338\times 9338}$.

The computed value of the structured radius $r^{\rm Herm}(R; B)$ along with the computed global minimizer
$\omega_\ast$ of $\widetilde{\eta}^{\rm Herm}(R; B, {\rm i}\omega)$, as well as the number of subspace
iterations, run-time (in $s$) and the subspace dimension at termination are listed in
Table \ref{tab:brake_squeal_structured} for various values of $\Omega$.  It is worth comparing the computed values of
$r^{\rm Herm}(R; B)$ in this table with those for the unstructured
stability radius $r(R; B, B^T)$ listed in Table \ref{tab:brake_squeal_unstructured}. The computed structured
and the unstructured stability radii are close, though the structured stability radii are slightly larger as expected in theory.

\begin{table}
	\hskip -0.2ex
	\begin{tabular}{|c||ccccc|}
	\hline
			$\Omega$		&	$r^{\rm Herm}(R; B)$		&	$\omega_\ast$			&	iterations		&	run-time 	&	dimension	 \\
	\hline
	\hline
			 2.5			&	0.01067		&	$-1.938\times 10^5$		&	2			&	145.8		&	72	\\		
			 5			&	0.01038		&	$-1.938\times 10^5$		&	2			&	133.5		&	72	\\
			 10			&	0.01026		&	$-1.938\times 10^5$		&	2			&	132.0		&	72	\\
			 50			&	0.01000		&	$-1.938\times 10^5$		&	2			&	132.5		&	72	\\
			 100			&	0.00988		&	$-1.938\times 10^5$		&	1			&	94.4			&	66	\\	
			 1000		&	0.00810		&	$-1.789\times 10^5$		&	2			&	127.7		&	72	\\
			 1050		&	0.00794		&	$-1.789\times 10^5$		&	2			&	126.4		&	72	\\
			 1100		&	0.00835		&	$-1.789\times 10^5$		&	3			&	171.2		&	78	\\
			 1116		&	0.01092		&	$-1.789\times 10^5$		&	2			&	127.4		&	72	\\
			 1150		&	0.00346		&	$-1.742\times 10^5$		&	2			&	124.3		&	72	\\
			 1200		&	0.00408		&	$-1.742\times 10^5$		&	2			&	121.2		&	72	\\
			 1250		&	0.00472		&	$-1.742\times 10^5$		&	2			&	119.1		&	72	\\
			 1300		&	0.00517		&	$-1.742\times 10^5$		&	2			&	117.1		&	72	\\
	\hline
	\end{tabular}
\caption{ Structured stability radii $r^{\rm Herm}(R; B)$ computed by Algorithm \ref{alg:structured_kadii_sf} for the FE model of a disk brake of order $9338$  for several values of $\Omega$.
The column $\omega_\ast$ depicts the computed global minimizer of $\widetilde{\eta}^{\rm Herm}(R; B, {\rm i}\omega)$,
whereas the last three columns depict the number of subspace iterations, the total run-time (in $s$), and the subspace dimension at termination.
%, the number of subspace iterations, the total run-time in seconds,
%the subspace dimension at termination, respectively.
}
\label{tab:brake_squeal_structured}
\end{table}

\section{Concluding Remarks}
We have proposed subspace frameworks to compute the stability radii for large scale dissipative Hamiltonian systems. The frameworks operate on the eigenvalue optimization characterizations of the stability radii derived
in \cite{MehMS16a}. At every iteration, we apply DH structure preserving Petrov-Galerkin projections to small subspaces. This leads to the computation of the corresponding stability radii for the reduced system. We expand the subspaces used in the
Petrov-Galerkin projections so that Hermite-interpolation properties between the objective eigenvalue function
of the full and the reduced problems are attained at the optimizer of the reduced problem. This strategy results in super-linear convergence with respect to the subspace dimensions. We have illustrated that the frameworks work well
in practice on several synthetic examples, and a FE model of a disk brake.

Matlab implementations of the proposed algorithms and subspace frameworks are made publicly available
on the web\footnote{\url{http://home.ku.edu.tr/~emengi/software/DH-stabradii.html}}. Some of the data
(including the one associated with the disk brake example) used in the numerical experiments
are also available on the same website.

One difficulty is that the proposed frameworks converge only locally. As a remedy for this, we have
initiated the subspaces to attain Hermite interpolation at several points on the imaginary axis between the full and initial
reduced problems. One potential strategy that is currently investigated is to employ equally spaced interpolation points. Another potential strategy finds the poles closest
to these equally spaced points, then employs the imaginary parts of the poles as the initial interpolation points.
%Use of the imaginary parts of the dominant poles, e.g., for the computation of $r(R; %B,C)$, the eigenvalues of $(J-R)Q$
%maximizing $\sigma_{\max}(CQ ( {\rm i} \cdot {\rm Im}(z) - (J-R)Q)^{-1} B)$ over $z$ %in the spectrum of $(J-R)Q$)
%as the initial interpolation points may result in substantial robustness against local %convergence.
%There are algorithms in the literature for dominant pole estimation \cite{Rommes}, %however these algorithms do not necessarily return the most dominant poles.

Another research direction that is currently investigated is the maximization of the stability radii, when $J, R, Q$ depend on
parameters in a given parameter set. As an example, for the dissipative Hamiltonian system arising from the FE model of a disk
brake, even in the simple setting considered here, $J, R, Q$ depend on the rotation speed $\Omega$.
%We are well equipped to tackle with the maximization
%of the stability radii. In particular, the stability radius maximization problem for a %PH system seems to have a lot in
%common with the ${\mathcal H}_\infty$-norm minimization problem considered in %\cite{Aliyev2018}. A subspace framework
%tailored is likely to require large-scale computation of the stability radii a few %times, which can be dealt with the
%proposed approaches in this paper. We leave the problem of maximizing the stability %radius of a PH system for future.

\bibliography{PH_stabradii}

\end{document}